\theoremstyle{definition}
\newtheorem{dfn}{Definition}[section]
\newtheorem{rem}[dfn]{Remark}
\newtheorem{exa}[dfn]{Example}
\newtheorem{prop}[dfn]{Proposition}
\newtheorem{thm}[dfn]{Theorem}
\newtheorem{lem}[dfn]{Lemma}
\newtheorem{cor}[dfn]{Corollary}
\def\ang<#1>{\langle #1 \rangle}
\def\bigang<#1>{\left\langle #1 \right\rangle}
\newcommand*{\defeq}{\mathrel{\rlap{%
 \raisebox{0.3ex}{$\m@th\cdot$}}%
 \raisebox{-0.3ex}{$\m@th\cdot$}}%
 =}
\tikzset{
 labl/.style={anchor=south, rotate=90, inner sep=.5mm}
}
\numberwithin{equation}{section}
\newcommand{\Ch}{\operatorname{Ch}}
\newcommand{\Chb}{\operatorname{Ch^{b}}}
\newcommand{\Fact}{\operatorname{Fact}}
\newcommand{\dgFact}{\operatorname{\sf{Fact}}}
\newcommand{\Kadd}{\operatorname{Kadd}}
\newcommand{\Dmod}{\operatorname{Dmod}}
\newcommand{\DcoMod}{\operatorname{D^{co}\hspace{-0.1mm}Mod}}
\newcommand{\DMF}{\operatorname{DMF}}
\newcommand{\KMF}{\operatorname{KMF}}
\newcommand{\Dcoh}{\operatorname{Dcoh}}
\newcommand{\DcoQcoh}{\operatorname{D^{co}\hspace{-0.1mm}Qcoh}}
\newcommand{\Sing}{\operatorname{Sing}}
\newcommand{\Spec}{\operatorname{Spec}}
\newcommand{\Hom}{\operatorname{Hom}}
\newcommand{\cHom}{\operatorname{{\mathcal H}om}}
\newcommand{\Ext}{\operatorname{Ext}}
\newcommand{\Tot}{\operatorname{Tot}}
\newcommand{\RHom}{\operatorname{{\mathbb R}Hom}}
\newcommand{\End}{\operatorname{End}}
\newcommand{\cEnd}{\operatorname{{\mathcal E}nd}}
\newcommand{\coh}{\operatorname{coh}}
\newcommand{\Qcoh}{\operatorname{Qcoh}}
\newcommand{\Mod}{\operatorname{Mod}}
\newcommand{\Proj}{\operatorname{Proj}}
\newcommand{\fmod}{\operatorname{mod}}
\newcommand{\add}{\operatorname{add}}
\newcommand{\Add}{\operatorname{Add}}
\newcommand{\vect}{\operatorname{vect}}
\newcommand{\lfr}{\operatorname{lfr}}
\newcommand{\proj}{\operatorname{proj}}
\newcommand{\Db}{\operatorname{D^b}}
\newcommand{\Dco}{\operatorname{D^{co}}}
\newcommand{\D}{\operatorname{D}}
\newcommand{\Aco}{\operatorname{A^{co}}}
\newcommand{\A}{\operatorname{A}}
\newcommand{\K}{\operatorname{K}}
\newcommand{\Kb}{\operatorname{K^b}}
\newcommand{\id}{\operatorname{id}}
\newcommand{\Pf}{\operatorname{Pf}}
\newcommand{\diag}{\operatorname{diag}}
\newcommand{\Ker}{\operatorname{Ker}}
\newcommand{\Sp}{\operatorname{Sp}}
\newcommand{\GL}{\operatorname{GL}}
\newcommand{\GSp}{\operatorname{GSp}}
\newcommand{\Gr}{\operatorname{Gr}}
\newcommand{\Res}{\operatorname{\sf Res}}
\newcommand{\Ind}{\operatorname{\sf Ind}}
\newcommand{\aff}{\operatorname{\sf aff}}
\newcommand{\rk}{\operatorname{\sf rk}}
\newcommand{\stab}{\operatorname{ss}}
\newcommand{\DB}{\operatorname{DB}}
\newcommand{\rep}{\operatorname{rep}}
\newcommand{\Sch}{\operatorname{Sch\!}}
\newcommand{\fppf}{\operatorname{fppf}}
\newcommand{\Rings}{\operatorname{Rings}}
\newcommand{\Sets}{\operatorname{Sets}}
\newcommand{\op}{\operatorname{op}}
\newcommand{\can}{\operatorname{can}}
\newcommand{\Sh}{\operatorname{Sh}}
\newcommand{\et}{\text{\rm \'et}}
\newcommand{\cA}{\mathcal{A}}
\newcommand{\cB}{\mathcal{B}}
\newcommand{\cC}{\mathcal{C}}
\newcommand{\cE}{\mathcal{E}}
\newcommand{\cF}{\mathcal{F}}
\newcommand{\cG}{\mathcal{G}}
\newcommand{\cI}{\mathcal{I}}
\newcommand{\cJ}{\mathcal{J}}
\newcommand{\cK}{\mathcal{K}}
\newcommand{\cM}{\mathcal{M}}
\newcommand{\cN}{\mathcal{N}}
\newcommand{\cO}{\mathcal{O}}
\newcommand{\cT}{\mathcal{T}}
\newcommand{\cX}{\mathcal{X}}
\newcommand{\cY}{\mathcal{Y}}
\newcommand{\bA}{\mathbb{A}}
\newcommand{\bC}{\mathbb{C}}
\newcommand{\bG}{\mathbb{G}}
\newcommand{\bL}{\mathbb{L}}
\newcommand{\bP}{\mathbb{P}}
\newcommand{\bR}{\mathbb{R}}
\newcommand{\bZ}{\mathbb{Z}}
\newcommand{\scrA}{\EuScript{A}}
\newcommand{\scrB}{\EuScript{B}}
\newcommand{\scrC}{\EuScript{C}}
\newcommand{\scrD}{\EuScript{D}}
\newcommand{\scrE}{\EuScript{E}}
\newcommand{\scrF}{\EuScript{F}}
\newcommand{\scrI}{\EuScript{I}}
\newcommand{\scrP}{\EuScript{P}}
\newcommand{\scrQ}{\EuScript{Q}}
\newcommand{\scrT}{\EuScript{T}}
\newcommand{\scrV}{\EuScript{V}}
\newcommand{\scrY}{\EuScript{Y}}
\newcommand{\F}{\operatorname{\sf{F}}}
\newcommand{\G}{\operatorname{\sf{G}}}
\newcommand{\simto}{\,\,\,\,\raisebox{4pt}{$\sim$}{\kern -1.1em \longrightarrow}\,}
\newcommand{\hookto}{\hookrightarrow}
\newcommand{\surjto}{\twoheadrightarrow}
\newcommand{\timesH}{\times^{\hspace{-0.2mm}H\hspace{0.2mm}}\!\!}
\newcommand{\gwedge}{\raisebox{1pt}[0pt][0pt]{$\textstyle \bigwedge^{\raisebox{-1pt}[0pt][0pt]{\tiny\!$2$\,}}$}}
\newcommand{\rest}{\!|_L}
\begin{document}

\newcommand{\arXivNumber}{2009.12785}

\renewcommand{\thefootnote}{}

\renewcommand{\PaperNumber}{055}

\FirstPageHeading

\ShortArticleName{Equivariant Tilting Modules, Pfaffian Varieties and Noncommutative Matrix Factorizations}

\ArticleName{Equivariant Tilting Modules, Pfaffian Varieties\\ and Noncommutative Matrix Factorizations\footnote{This paper is a~contribution to the Special Issue on Primitive Forms and Related Topics in honor of~Kyoji Saito for his 77th birthday. The full collection is available at \href{https://www.emis.de/journals/SIGMA/Saito.html}{https://www.emis.de/journals/SIGMA/Saito.html}}}

\Author{Yuki HIRANO}

\AuthorNameForHeading{Y.~Hirano}

\Address{Department of Mathematics, Kyoto University,\\ Kitashirakawa-Oiwake-cho, Sakyo-ku, Kyoto, 606-8502, Japan}
\Email{\href{mailto:y.hirano@math.kyoto-u.ac.jp}{y.hirano@math.kyoto-u.ac.jp}}

\ArticleDates{Received September 29, 2020, in final form May 28, 2021; Published online June 02, 2021}

\Abstract{We show that equivariant tilting modules over equivariant algebras induce equivalences of derived factorization categories. As an application, we show that the derived cate\-gory of a noncommutative resolution of a linear section of a Pfaffian variety is equivalent to the derived factorization category of a noncommutative gauged Landau--Ginzburg model $(\Lambda,\chi, w)^{\mathbb{G}_m}$, where $\Lambda$ is a noncommutative resolution of the quotient singularity $W/\operatorname{GSp}(Q)$ arising from a certain representation $W$ of the symplectic similitude group $\operatorname{GSp}(Q)$ of a~symplectic vector space~$Q$.}

\Keywords{equivariant tilting module; Pfaffian variety; matrix factorization}

\Classification{14F08; 18G80; 16E35}

\begin{flushright}
\begin{minipage}{60mm}
\it Dedicated to Professor Kyoji~Saito \\ on the occasion of his~77th~birthday
\end{minipage}
\end{flushright}

\renewcommand{\thefootnote}{\arabic{footnote}}
\setcounter{footnote}{0}

\section{Introduction}

\subsection{Backgrounds}
Derived categories of coherent sheaves on varieties are one of the important invariants in algebraic geometry, and they have interesting links to other fields of mathematics. For example, if a smooth variety $X$ admits a tilting bundle $\cT$, the derived category $\Db(\coh X)$ of coherent sheaves on $X$ is equivalent to the derived category $\Db(\fmod\Lambda)$ of finitely generated right modules over the endomorphism ring $\Lambda\defeq\End_X(\cT)$ of $\cT$.
Once we have such an equivalence, we can study the derived category of coherent sheaves by the representation theory of noncommutative algebras. However, if $X$ is a smooth projective Calabi--Yau variety, $X$ can never admit a tilting bundle.

Recently, Okonek--Teleman proved that the derived category of a regular zero section in a~certain smooth variety is equivalent to the derived factorization category of a noncommutative gauged Landau--Ginzbubrg (LG) model~\cite{ot}. In particular, they proved that the derived category $\Db(\coh Z)$ of a Calabi--Yau complete intersection $Z\subset \bP^n$ is equivalent to the derived factorization category $\Dmod_{\bG_m}(\Lambda,\chi,w)$ of a noncommutative gauged LG model $(\Lambda,\chi,w)^{\bG_m}$, where $\Lambda$ is a noncommutative crepant resolution of some quotient singularity. This result gives a new approach to the study of the derived categories of Calabi--Yau complete intersections. For example, it is interesting to interpret autoequivalences of $\Db(\coh Z)$ with autoequivalences of $\Dmod_{\bG_m}(\Lambda,\chi,w)$ induced by $\bG_m$-equivariant tilting modules/complexes over $\Lambda$, and it is expected that this interpretation enables us to study the fundamental group action on $\Db(\coh Z)$, constructed in~\cite{hl-s} using variations of GIT quotients, by equivariant tilting theory over $\Lambda$.

In this paper, for any reductive affine algebraic group $G$,
we generalize Okonek--Teleman's result to a $G$-equivariant setting. More precisely, we prove that $G$-equivariant tilting modules over $G$-equivariant algebras induce equivalences of derived factorization categories of noncommutative gauged LG models. Moreover, combining Rennemo--Segal's results in~\cite{rs} with our result, we prove that the derived category of a noncommutative resolution of a generic linear section of a Pfaffian variety is equivalent to the derived factorization category of a noncommutative gauged LG model $(\Lambda, \chi, w)^{\mathbb{G}_m}$, where $\Lambda$ is a non-commutative resolution of the quotient singularity $W/\GSp(Q)$ arising from a certain representation $W$ of the symplectic similitude group $\GSp(Q)$ of a symplectic vector space $Q$.

\subsection{Equivariant algebras and noncommutative LG models}

Let $X$ be a scheme, and $G$ an algebraic group acting on $X$. Denote by $\sigma\colon G\times X\to X$ the morphism defining the $G$-action on $X$, and write $\pi\colon G\times X\to X$ for the natural projection. Let~$\uprho\colon \cO_X\to\cA$ be a (not necessarily commutative) $\cO_X$-algebra that is a coherent $\cO_X$-module. A~{\it $G$-equivariant structure} on $\cA$ is an isomorphism $\uptheta^{\cA}\colon\pi^*\cA\simto\sigma^*\cA$ of sheaves of $\cO_{G\times X}$-algebras such that the pair $\big(\cA,\uptheta^{\cA}\big)$ defines the $G$-equivariant coherent $\cO_X$-module, and we call the pair $\big(\cA,\uptheta^{\cA}\big)$ a {\it $G$-equivariant coherent $X$-algebra}. Similarly, we define a {\it $G$-equivariant coherent $\cA$-module} to be the pair $\big(\cM,\uptheta^{\cM}\big)$ of a right $\cA$-module $\cM$ and an isomorphism $\uptheta^{\cM}\colon\pi^*\cM\simto\sigma^*\cM$ of $\pi^*\cA$-modules such that the pair $\big(\cM,\uptheta^{\cM}\big)$ defines a $G$-equivariant coherent $\cO_X$-module. We~write $\coh_G\cA$ for the category of $G$-equivariant coherent $\cA$-modules. Then we have an equivalence
\begin{gather*}
\coh_G\cA\cong\coh[\cA/G]
\end{gather*}
of categories, where $[\cA/G]$ is the associated coherent $[X/G]$-algebra (see Appendix~\ref{appendix}).

Let $\chi\colon G\to \bG_m$ be a character of $G$, and $w\colon X\to \bA^1$ a $\chi$-semi-invariant regular function on $X$, i.e., $W(g\cdot x)=\chi(g)W(x)$ for any $(g,x)\in G\times X$. We call the data $(\cA,\chi,w)^G$ a {\it gauged Landau--Ginzburg $(LG)$ model}, and it is said to be {\it noncommutative} if the algebra $\cA$ is not commutative. We define a {\it factorization} of $(\cA,\chi,w)^G$ to be a sequence
\begin{gather*}
\cM_1\xrightarrow{\varphi_1}\cM_0\xrightarrow{\varphi_0}\cO_X(\chi)\otimes_{\cO_X}\cM_1
\end{gather*}
consisting of $G$-equivariant coherent $\cA$-modules $\cM_i$ and $G$-equivariant $\cA$-linear maps $\varphi_i$ such that the compositions $\varphi_0\circ \varphi_1$ and $\varphi_1(\chi)\circ\varphi_0$ are the multiplications by $w$.
To a gauged LG model $(\cA,\chi,w)^G$ we associate the {\it derived factorization category}
\begin{gather*}
\Dcoh_G(\cA,\chi,w)
\end{gather*}
whose objects are factorizations of $(\cA,\chi,w)^G$. If $X=\Spec R$, the $G$-equivariant $X$-algebra $\cA$ corresponds to the $G$-equivariant $R$-algebra $A\defeq\Gamma(X,\cA)$, and we write $\Dmod_G(A,\chi,w)$ for the corresponding derived factorization category.

\subsection{Results}
 Let $X$ be a variety, $G$ an affine algebraic group acting on $X$, and $H$ a closed normal subgroup of $G$.
 \begin{dfn}
 A $G$-equivariant coherent $\cA$-module $\cT\in \coh_G\cA$ is called a {\it $(G,H)$-tilting} (resp.\ {\it partial $(G,H)$-tilting}) if the restriction $\cT_H\in \coh_H\cA$ satisfies the following conditions (1), (2) and (3) (resp.\ (1) and (2)):
 \begin{itemize}\itemsep=0pt
\item[$1.$] We have $\Ext_{\coh_H\cA}^i(\cT_H,\cT_H)=0$ for all $i>0$.

\item[$2.$] The object $\cT_H$ is {\it compact} in $\D\big(\Qcoh_H\cA\big)$.

\item[$3.$] The object $\cT_H$ is a {\it generator} of $\D\big(\Qcoh_H\cA\big)$, i.e., for any complex $A^{\bullet}$ in $\Qcoh_H\cA$, $\RHom(T,A^{\bullet})\cong0$ in $\D(\Mod\End(\cT_H))$ implies $A^{\bullet}\cong0$ in $\D\big(\Qcoh_H\cA\big)$.
\end{itemize}
 \end{dfn}

Let $\Spec R$ be an affine variety with a $G$-action such that the induced $H$-action is trivial so that $\Spec R$ has the induced $G/H$-action.
Let $f\colon X\to \Spec R$ be a $G$-equivariant morphism such that the associated morphism $\overline{f}\colon[X/H]\to \Spec R$ is proper. Let $\cT\in \coh_G\cA$ be a $G$-equivariant coherent $\cA$-module, and write $\Lambda\defeq\End_{\coh\cA}(\cT)$ for the endomorphism ring of the underlying $\cA$-module $\cT$. Then $\Lambda$ has a natural $H$-action, and the $H$-invariant ring $\Lambda^H$ is a~$G/H$-equivariant coherent $R$-algebra.
Let $\chi\colon G/H\to \bG_m$ be a character of $G/H$, and define the character $\widehat{\chi}\colon G\to \bG_m$ to be the composition of $\chi$ and the natural projection $G\to G/H$. Let $w_R\colon \Spec R\to \bA^1$ be a $\chi$-semi-invariant regular function on $\Spec R$, and write $w\defeq f^*w_R$. Then we have two gauged LG models
\begin{gather*}
\big(\cA,\widehat{\chi},w\big)^G \qquad\text{and}\qquad \big(\Lambda^H,\chi,w_R\big)^{G/H}.
\end{gather*}
The following is a generalization of Okonek--Teleman's result~\cite[Theorem~1.11]{ot}.
\begin{thm}[Theorem~\ref{tilt and fact}]\label{main intro}
Assume that $G/H$ is reductive and $\Lambda^H$ is of finite global dimension.
\begin{itemize}\itemsep=0pt
\item[$1.$] If $\cT$ is partial $(G,H)$-tilting, then we have a fully faithful functor
\begin{gather*}
\Dmod_{G/H}\big(\Lambda^H,\chi,w_R\big)\hookto\Dcoh_{G}\big(\cA,\widehat{\chi},w\big).
\end{gather*}
\item[$2.$] If $\cT$ is $(G,H)$-tilting and every quasi-coherent $\cA$-module has a finite injective resolution, we have an equivalecne
\begin{gather*}
\Dcoh_{G}\big(\cA,\widehat{\chi},w\big)\simto\Dmod_{G/H}\big(\Lambda^H,\chi,w_R\big).
\end{gather*}
\end{itemize}
\end{thm}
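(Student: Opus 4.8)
The plan is to obtain the functors from $\RHom_{\cA}(\cT,-)$ and its left adjoint, to reduce the two assertions to a statement about ordinary derived categories of equivariant modules, and then to transport that statement through the passage to factorization categories.

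\emph{Step 1: the functors on factorizations.} For $\cM\in\coh_G\cA$ the $\cA$-module $\Hom_{\cA}(\cT,\cM)$ carries commuting actions of $\Lambda=\End_{\coh\cA}(\cT)$ and of $H$; as the $H$-action on $\Spec R$ is trivial and $\overline f\colon[X/H]\to\Spec R$ is proper, its $H$-invariants push forward to a $(G/H)$-equivariant coherent $\Lambda^H$-module. Deriving this gives an adjoint pair consisting of $\Psi=(-)\otimes^{\mathbf L}_{\Lambda^H}\cT\colon\D(\Qcoh_{G/H}\Lambda^H)\to\D(\Qcoh_G\cA)$ and $\Phi=\RHom_{\cA}(\cT,-)$ in the opposite direction. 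One checks that both commute with the twist autoequivalences determined by $\chi$ and by $\widehat\chi$ — these correspond because $\widehat\chi$ is the pullback of $\chi$ along $G\to G/H$ — and that they intertwine multiplication by $w_R$ with multiplication by $w=f^{*}w_R$, which is central since it lies in the image of $R\to\Gamma(X,\cO_X)\to\cA$. Hence $\Phi$ and $\Psi$ send factorizations to factorizations and induce functors between $\Dcoh_G(\cA,\widehat\chi,w)$ and $\Dmod_{G/H}(\Lambda^H,\chi,w_R)$ in the two directions appearing in the theorem.

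\emph{Step 2: the uncurved equivalence.} Conditions~(1) and~(2) say that $\cT_H\in\coh_H\cA$ is a compact object of $\D(\Qcoh_H\cA)$ with vanishing higher self-extensions, so by derived Morita theory $(-)\otimes^{\mathbf L}_{\Lambda^H}\cT_H\colon\D(\Mod\Lambda^H)\to\D(\Qcoh_H\cA)$ is fully faithful, and it is an equivalence precisely when condition~(3) holds. Since $H$ is normal in $G$ and $G/H$ is reductive, $G$-equivariant objects are $(G/H)$-equivariant objects among $H$-equivariant ones, and the associated (de)equivariantization functors are exact with exact adjoints; this upgrades the previous assertion to: $\Psi\colon\D(\Qcoh_{G/H}\Lambda^H)\to\D(\Qcoh_G\cA)$ is fully faithful, and an equivalence under~(3). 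Finite global dimension of $\Lambda^H$ is used here to guarantee that the bounded coherent variants match, so that one may work with $\Dmod$ and $\Dcoh$.

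\emph{Step 3: transport to factorizations, and the main obstacle.} Using the finiteness hypotheses I would present both derived factorization categories by convenient factorizations: those of $(\Lambda^H,\chi,w_R)$ with finite projective components (available by finite global dimension) compute $\Dmod_{G/H}(\Lambda^H,\chi,w_R)$, and — for part~2 — those with injective components compute $\Dcoh_G(\cA,\widehat\chi,w)$, because every quasi-coherent $\cA$-module has a finite injective resolution. On such representatives $\Psi$ and $\Phi$ compute their derived versions componentwise and commute with the totalization of one-periodic complexes of factorizations; combined with Step~2 this yields full faithfulness of $\Psi$ for part~1, and for part~2 that $\Phi$ and $\Psi$ are mutually inverse, with condition~(3) supplying essential surjectivity. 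The points requiring care — and the real obstacle — are: (a) the resolution statements for equivariant factorizations, namely that a (quasi-)coherent factorization can be replaced by a ``good'' one without leaving the factorization category, where the reductivity of $G/H$ and the injective-resolution hypothesis are essential; and (b) the compatibility of $\Phi$ and $\Psi$ with totalizations, so that the derived functors descend to the Verdier quotients by acyclic factorizations. Granting these, the theorem follows by identifying the units and counits of the adjunction, which are isomorphisms by Step~2.
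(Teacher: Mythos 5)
Your plan identifies the right ingredients — the uncurved tilting equivalence of Theorem~\ref{derived equiv}, the need to transport it to factorization categories, and an upgrade from $H$-equivariant to $G$-equivariant objects — but the proposal has a genuine gap exactly where you flag ``the real obstacle'' in Step~3, and that is precisely where the paper concentrates its technical work. You say ``Granting (a) and (b), the theorem follows,'' but (a) and (b) are not formal; they are the content. Knowing that $\bL\G$ gives an equivalence $\Db(\Mod_{G/H}\Lambda^H)\simeq\Db(\Qcoh_G\cA)$ does \emph{not} formally imply that the associated derived factorization categories agree: a derived factorization category is a Verdier quotient of a homotopy category of factorizations, not a derived category of complexes, so one cannot simply ``present it by good representatives and transport.'' The paper establishes the transport by proving, in a general Grothendieck category (Theorem~\ref{main thm}), that $\K(\Add T,\Phi,w)\to\Dco(\scrA,\Phi,w)$ is fully faithful (Lemma~\ref{add ff}, resting on the injective-resolution machinery from~\cite{bdfik}), that $\F\colon\Add T\to\Proj\Lambda$ is an additive equivalence which is compatible with the one-periodic twist, and then uses Lemma~\ref{above ff} together with \cite[Lemma~4.11]{bdfik} to get full faithfulness and essential surjectivity on the coderived and bounded coherent factorization categories. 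None of this appears in your sketch.

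Your order of operations also differs from the paper's and is less convenient. You propose to upgrade $H\to G$ at the uncurved level first and then pass to factorizations at the $G$-level; the paper instead passes to factorizations at the $H$-level (Theorem~\ref{main thm} applied to $\cT_H$, $\F_H$, $\G_H$) and only then upgrades to $G$. The upgrade is done via a purely formal reduction lemma for triangulated adjunctions (Lemma~\ref{reduction lemma}), which needs only that the vertical comparison functors are faithful and exact; faithfulness of the restriction functors comes from Lemma~\ref{faithful lemma}, i.e., from the fact that the unit $\id\to\Ind^G_H\Res^G_H$ is a split mono when $G/H$ is reductive (Lemma~\ref{split mono}). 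Your appeal to ``(de)equivariantization functors are exact with exact adjoints'' gestures at this but does not isolate the key point — faithfulness of restriction — nor explain why it propagates through the Verdier quotient to the factorization setting. Doing the $H\to G$ upgrade \emph{after} the passage to factorizations, as in the paper, makes that propagation automatic because the reduction lemma is stated directly for exact functors of triangulated categories.
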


Let $V$ be a vector space of dimension $v$. For an integer $q$ with $0\leq 2q\leq v$, we have a Pfaffian variety
\begin{gather*}
\Pf_q\defeq\big\{x\in\gwedge V^* \mid \rk(x)\leq 2q\big\}\subseteq \bP\big(\gwedge V^*\big),
\end{gather*}
and we consider its linear section
\begin{gather*}
\Pf_q\rest\defeq\Pf_q\cap\,\,\bP(L),
\end{gather*}
where $L\subset \gwedge V^*$ is a subspace of $V$ such that $\Pf_q\rest\neq\varnothing$. If we choose a generic $L$, there is a noncommutative resolution $\cB_L$ of $\Pf_q\rest$~\cite{rsv,svdb}, and if $\Pf_q\rest$ is smooth $\Db(\coh\cB_L)$ is equivalent to $\Db(\coh\Pf_q\rest)$. Combining Theorem~\ref{main intro} and the results in~\cite{rs}, we have the following.
\begin{cor}[Corollary~\ref{main cor}]
There is a noncommutative gauged LG model $(\Lambda,\chi,w)^{\bG_m}$ such that we have an equivalence
\begin{gather*}
\Db(\coh \cB_L)\cong \Dmod_{\bG_m}(\Lambda,\chi,w)
\end{gather*}
and that $\Lambda$ is a noncommutative resolution of the affine quotient $W/\GSp(Q)$ of a certain representation $W$ of the symplectic similitude group $\GSp(Q)$ of a symplectic vector space $Q$.
\end{cor}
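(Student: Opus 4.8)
The plan is to combine the equivariant tilting equivalence from Theorem~\ref{main intro} with the geometric input of Rennemo--Segal~\cite{rs}. First I would recall the precise output of~\cite{rs}: they realize the derived category of the noncommutative resolution $\cB_L$ of the linear section $\Pf_q\rest$ as a derived factorization category $\Dcoh_{\GSp(Q)\times\bG_m}(\cA,\widehat\chi,w)$ of a gauged LG model attached to a $\GSp(Q)\times\bG_m$-representation $W$ (or a partial compactification thereof), where the $\bG_m$ records the grading coming from the projective ambient space $\bP(L)$ and the potential $w$ is a $\widehat\chi$-semi-invariant function cut out by the choice of generic $L$. The key point is that the resolution $\cB_L$ itself arises, in the Rennemo--Segal picture, as the $\cO_X$-algebra $\cA$ together with a tilting-type object; equivalently the $\GSp(Q)$-action plays the role of $H$ in our setup and the residual $\bG_m = (G/H)$ is the gauge group of the target LG model.

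Next I would verify the hypotheses of Theorem~\ref{main intro}(2) in this concrete situation. Set $G = \GSp(Q)\times\bG_m$ and $H = \GSp(Q)$, so $G/H\cong\bG_m$ is reductive, and take $X$ to be the (GIT or stacky) total space appearing in~\cite{rs} with its $G$-action, $f\colon X\to\Spec R$ the affinization map, which is proper after passing to $[X/H]$ because the section construction is projective over the affine quotient $W/\GSp(Q)$. The object $\cT$ is the Rennemo--Segal tilting module; one must check it is $(G,H)$-tilting, i.e., that its restriction $\cT_H$ generates, is compact, and has no higher self-extensions in $\D(\Qcoh_H\cA)$ --- these are exactly the properties making $\Db(\coh\cB_L)\cong\Db(\coh\Pf_q\rest)$ (when smooth) in \emph{loc.\ cit.}, so they are available. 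Finiteness of global dimension of $\Lambda^H = \End_{\coh\cA}(\cT)^{\GSp(Q)}$ and the existence of finite injective resolutions for quasi-coherent $\cA$-modules follow because $\cA$ is, up to the tilting equivalence, a noncommutative resolution of the quotient singularity $W/\GSp(Q)$, hence regular in the appropriate noncommutative sense; this is where the word ``resolution'' in the statement does its work.

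Then the conclusion is immediate: Theorem~\ref{main intro}(2) gives an equivalence
\begin{gather*}
\Dcoh_{G}\big(\cA,\widehat\chi,w\big)\;\simto\;\Dmod_{\bG_m}\big(\Lambda^H,\chi,w_R\big),
\end{gather*}
and composing with the Rennemo--Segal identification $\Db(\coh\cB_L)\cong\Dcoh_{G}(\cA,\widehat\chi,w)$ yields $\Db(\coh\cB_L)\cong\Dmod_{\bG_m}(\Lambda,\chi,w)$ with $\Lambda\defeq\Lambda^H$, while the identification of $\Lambda$ as a noncommutative resolution of $W/\GSp(Q)$ is inherited from the description of $\cA$. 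I expect the main obstacle to be bookkeeping rather than a deep difficulty: one must match conventions between the LG model of~\cite{rs} (which group acts, which character is $\chi$, where the potential lives, and whether one works on an open substack or a partial compactification of $W$) and the hypotheses of Theorem~\ref{main intro}, and in particular confirm that the ``$w = f^*w_R$'' factorization of the potential through the affine base is exactly the structure present in the Pfaffian construction. A secondary technical point is checking the finite-injective-dimension hypothesis for $\Qcoh_H\cA$, which may require either an explicit argument using the noncommutative regularity of $\cA$ or a reduction to the smooth stack $[X/H]$ via the equivalence $\coh_H\cA\cong\coh[\cA/H]$ recalled in the excerpt.
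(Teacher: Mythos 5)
There is a genuine gap in your argument: you invoke Theorem~\ref{main intro}(2) (equivalently Theorem~\ref{tilt and fact}(2)), which requires the equivariant module to be a genuine $(G,H)$-\emph{tilting} module, i.e., a generator of $\D(\Qcoh_H\cA)$. In the Pfaffian setting this fails. The relevant module $\widetilde{\scrT}$ sums only over the finitely many irreducible representations in the window $\scrY_{q,s}^{[-qv,qv]}\subset\rep\GSp(Q)$, so its restriction $\Res^{\,\bG_m\times\GSp(Q)}_{\GSp(Q)}\widetilde{\scrT}$ is merely a \emph{partial} tilting object of $\Qcoh_{\GSp(Q)}(Y\times L^{\perp})$ and does not generate. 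Correspondingly, there is no identification $\Db(\coh\cB_L)\cong\Dcoh_G(\cA,\widehat\chi,w)$: the Rennemo--Segal results (equations~\eqref{nc br} and~\eqref{window}) identify $\Db(\coh\cB_L)$ with the window subcategory $\DB(\cY\times_{\bG_m}L^{\perp},w)_{[-qv,qv]}$, a proper full subcategory of $\Dcoh_{\bG_m}\big(\cY\times_{\bG_m}L^{\perp},\chi_1,w\big)$. The paper therefore does \emph{not} use part~(2) of the theorem; it uses part~(1), whose conclusion is exactly an equivalence $\Dmod_{\bG_m}(\Lambda,\chi,w)\simeq\Kadd_{\bG_m\times\GSp(Q)}(\widetilde{\scrT},\chi,w)$, and then matches $\Kadd$ with the window category via~\eqref{window add} (citing~[rs, Lemma~2.8]). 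Your route, applied literally, would require hypotheses that are false here, and your final composition rests on an equivalence that is not available.

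A secondary issue is that you conflate three different algebras. In the application the $G$-equivariant $X$-algebra $\cA$ to which the tilting theorem is applied is the \emph{commutative} structure sheaf $\cO_{Y\times L^{\perp}}$ on the affine $\GSp(Q)\times\bG_m$-representation $Y\times L^{\perp}$; the noncommutative resolution $\cB_L$ lives on the linear section $\Pf_q\rest$ and is not the $\cA$ of the theorem; and $\Lambda=\End_{Y\times L^{\perp}}(\scrT)^{\GSp(Q)}$ is a third object, the noncommutative resolution of the affine quotient $(Y\times L^{\perp})/\GSp(Q)$. The paper's proof is a chain of four equivalences~\eqref{nc br}, \eqref{window}, \eqref{window add}, \eqref{tilt equiv}, and only the last is a tilting statement (in its partial form); the first three carry the geometric content and are pure Rennemo--Segal. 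Your proposal compresses the first three into a nonexistent global identification and then applies the wrong case of the tilting theorem, so as written it does not establish the corollary.
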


\subsection{Notation and convention}
\begin{itemize}\itemsep=0pt

\item Unless stated otherwise, categories and stacks we consider are over an algebraically closed field $k$ of characteristic zero.

\item For a character $\chi\colon G\to \bG_m$ of an algebraic group $G$, we denote by $\cO_X(\chi)$, or~sim\-ply~$\cO(\chi)$, the $G$-equivariant invertible sheaf on a $G$-scheme $X$ associated to $\chi$.

\item For an exact category $\scrE$, we denote by $\Ch(\scrE)$ (resp.\ $\Chb(\scrE)$) the category of cochain complexes (resp.\ bounded cochain complexes) in $\scrE$, and we write $\D(\scrE)$ (resp.\ $\Db(\scrE)$) for~the derived category (resp.\ the bounded derived category) of $\scrE$.
\end{itemize}

\section{Preliminaries}
In this section, we recall definitions and basic properties about derived factorization categories, equivariant quasi-coherent sheaves and tilting objects.

\subsection{Derived factorization categories}
We recall the basics of derived factorization categories mainly to fix notation. See, for example,~\cite{bdfik,H1,posi} for more details.
Throughout this section, $\scrA$ is an abelian category with small coproducts such that the small coproducts of families of short exact sequences are exact. In~what follows, we fix a triple
\begin{equation}\label{triple}
(\scrE,\Phi,w)
\end{equation}
consisting of an exact subcategory $\scrE\subseteq\scrA$ of $\scrA$, an exact autoequivalence $\Phi\colon \scrA\to \scrA$ pre\-ser\-ving~$\scrE$ and a functor morphism $w\colon \id\to\Phi$ that is compatible with $\Phi$, i.e., the equality $w({\Phi(E)})=\Phi(w(E))$ holds for every object $E\in\scrE$. The functor morphism $w\colon \id\to\Phi$ is called a {\it potential} of $\scrE$.

\begin{dfn} A {\it factorization} of $(\scrE,\Phi,w)$ is a sequence
\begin{gather*}
E=\Bigl(E_1\xrightarrow{\varphi^E_1}E_0\xrightarrow{\varphi^E_0}\Phi(E_1)\Bigr)
\end{gather*}
in $\mathcal{E}$ such that $\varphi^E_0\circ\varphi^E_1=w(E_1)$ and $\Phi\big(\varphi^E_1\big)\circ\varphi^E_0=w(E_0)$. Objects $E_1$ and $E_0$ in the above sequence are called the {\it components} of $E$. \end{dfn}

\begin{dfn}
Let $(\scrE,\Phi,w)$ be the above triple.
\begin{itemize}\itemsep=0pt
\item[$1.$] For two factorizations $E,F$ of $(\scrE,\Phi,w)$, we define the complex $\Hom(E,F)^{\bullet}$ with differential $d^{\bullet}_{(E,F)}\colon\Hom(E,F)^{\bullet}\to\Hom(E,F)^{\bullet+1}$ by
\begin{gather*}
{\rm Hom}(E,F)^{2n}\defeq{\rm Hom}_{\scrE}(E_1,\Phi^n(F_1))\oplus{\rm Hom}_{\scrE}(E_0,\Phi^n(F_0)),
\\
{\rm Hom}(E,F)^{2n+1}\defeq{\rm Hom}_{\scrE}(E_1,\Phi^{n}(F_0))\oplus{\rm Hom}_{\scrE}(E_0,\Phi^{n+1}(F_1)),
\end{gather*}
where $E_i$ and $F_i$ are the components of $E$ and $F$ respectively, and
\begin{gather*}
d^{\bullet}_{(E,F)}(f):=\varphi^F\circ f-(-1)^{{\rm deg}(f)}f\circ \varphi^E
\qquad\text{if}\quad f\in{\rm Hom}(E,F)^{{\rm deg}(f)}.
\end{gather*}
This defines the dg category
\begin{gather*}
\dgFact(\scrE,\Phi,w)
\end{gather*}
of factorizations of $(\scrE,\Phi,w)$.

\item[$2.$] The dg category $\dgFact(\scrE,\Phi,w)$ defines the additive categories
\begin{gather*}
\Fact(\scrE,\Phi,w)\defeq Z^0(\dgFact(\scrE,\Phi,w)),
\\
\K(\scrE,\Phi,w)\defeq H^0(\dgFact(\scrE,\Phi,w)).
\end{gather*}
We call $\K(\scrE,\Phi,w)$ the {\it homotopy category} of $(\scrE,\Phi,w)$.
\end{itemize}
\end{dfn}

The category $\Fact(\scrE,\Phi,w)$ is an exact category, and the homotopy category $\K(\scrE,\Phi,w)$ is a~triangulated category (see~\cite[Propositions~3.5 and~3.9]{H1}). To define the derived factorization categories, we define the totalizations of complexes of factorizations: For an object $E\in \Fact(\scrE,\Phi,w)$, let us set
\begin{gather*}
{\rm Com}(E)^{2i}\defeq\Phi^i(E_0), \qquad {\rm Com}(E)^{2i-1}\defeq\Phi^i(E_1),
\\
d_E^{2i}\defeq\Phi^i\big(\varphi_0^E\big),\qquad d_E^{2i-1}\defeq\Phi^i\big(\varphi_1^E\big).
\end{gather*}
Then a periodic (up to twists by $\Phi$) infinite sequence
\begin{gather*}
{\rm Com}(E)\defeq\big({\rm Com}(E)^{\bullet}, d_E^{\bullet}\big)
\end{gather*}
 in $\scrE$ satisfies $d_E^{i+1}\circ d_E^i=w({\rm Com}(E)^i)$ for all $i\in\bZ$.

\begin{dfn}
Let $E^{\bullet}=\big(\cdots\rightarrow E^i\xrightarrow{\delta^i}E^{i+1}\rightarrow\cdots\big)$ be a bounded complex of $\Fact(\scrE,\Phi,w)$. We define the {\it totalization} $\Tot(E^{\bullet})\in\Fact(\scrE,\Phi,w)$ of $E^{\bullet}$ by
\begin{gather*}
{\rm Tot}(E^{\bullet})\defeq\big(T_1\xrightarrow{t_1}T_0\xrightarrow{t_0}\Phi(T_1)\big),
\end{gather*}
where
\begin{gather*}
T_l:=\bigoplus_{i+j=-l}{\rm Com}(E^i)^j,
\\
t_l|_{{\rm Com}(E^i)^j}\defeq{\rm Com}(\delta^i)^j+(-1)^id_{E^i}^j.
\end{gather*}
Taking totalizations defines an exact functor
\begin{gather*}
\Tot\colon\ \Chb(\Fact(\scrE,\Phi,w))\rightarrow \Fact(\scrE,\Phi,w).
\end{gather*}
\end{dfn}

\begin{dfn}
Let $\A(\scrE,\Phi,w)$ be the smallest thick subcategory of $\K(\scrE,\Phi,w)$ that contains tota\-lizations of all short exact sequences in $\Fact(\scrE,\Phi,w)$. Then we define the {\it derived factorization category} $\D(\scrE,\Phi,w)$ of the triple $(\scrE,\Phi,w)$ by the Verdier quotient
\begin{gather*}
\D(\scrE,\Phi,w)\defeq\K(\scrE,\Phi,w)/\A(\scrE,\Phi,w).
\end{gather*}
If $\scrE$ is closed under coproducts in $\scrA$, we denote by
$\Aco(\scrE,\Phi,w)$ the smallest thick subcategory of $\K(\scrE,\Phi,w)$ that contains totalizations of all short exact sequences in $\Fact(\scrE,\Phi,w)$ and closed under coproducts. Then we define the {\it coderived factorization category} $\Dco(\scrE,\Phi,w)$ by
\begin{gather*}
\Dco(\scrE,\Phi,w)\defeq\K(\scrE,\Phi,w)/\Aco(\scrE,\Phi,w).
\end{gather*}
\end{dfn}

Let $\scrF$ be an exact subcategory of another abelian category $\scrB$ satisfying the same properties of $\scrA$, and let
\begin{gather*}
(\scrF,\Psi,v)
\end{gather*} be a triple as in \eqref{triple}.

\begin{dfn} An additive functor
\begin{gather*}
F\colon\ \scrE \to \scrF
\end{gather*}
is {\it factored} with respect to the potentials $(\Phi,w)$ and $(\Psi,v)$ if there is a functor isomorphism
\begin{gather*}
\alpha\colon\ F\circ \Phi \simto \Psi\circ F
\end{gather*}
such that for every object $E\in \scrE$, the following diagram commutes:
\[
\begin{tikzcd}
&F(E)\arrow[ld, "F(w(E))"']\arrow[rd, "v(F(E))"]&\\
F(\Phi(E))\arrow[rr, "\alpha(E)"]&& \Psi(F(E)).
\end{tikzcd}
\]
\end{dfn}

If $F\colon \scrE \to \scrF$ is a factored functor with respect to $(\Phi,w)$ and $(\Psi,v)$, then it induces a dg functor
\begin{gather*}
F\colon\ \dgFact(\scrE,\Phi,w)\to\dgFact(\scrF,\Psi,v).
\end{gather*}
This dg functor defines the additive functors
\begin{gather*}
F\colon\ \Fact(\scrE,\Phi,w)\to\Fact(\scrF,\Psi,v),
\\
F\colon\ \K(\scrE,\Phi,w)\to\K(\scrF,\Psi,v),
\end{gather*}
where the latter functor is an exact functor. To define the derived functors of exact functors between homotopy categories, we need the following results due to~\cite{bdfik}.

\begin{prop}[{\cite[Corollary~2.25]{bdfik}}]\label{inj resol}
Assume that $\scrA$ has enough injectives and that the coproducts of injectives are injective. Let $\scrI\subset \scrA$ be the subcategory of injective objects. Then the natural functor
\begin{gather*}
\K(\scrI,\Phi,w)\to\Dco(\scrA,\Phi,w)
\end{gather*}
is an equivalence.
\end{prop}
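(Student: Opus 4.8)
The plan is to factor the natural functor as the composite
$\K(\scrI,\Phi,w)\hookrightarrow\K(\scrA,\Phi,w)\to\Dco(\scrA,\Phi,w)$,
whose first arrow is fully faithful (it is induced by a full subcategory), and to prove that this composite is fully faithful and essentially surjective. Throughout one uses that an exact autoequivalence of an abelian category preserves injectives, so that $(\scrI,\Phi,w)$ is again a triple as in~\eqref{triple} and $\Phi^{m}I[n]$ lies in $\Fact(\scrI,\Phi,w)$ whenever $I$ does, and that small coproducts of injectives are injective (by hypothesis). Full faithfulness will come from an orthogonality statement, and essential surjectivity from the existence of suitable ``injective resolutions'' of factorizations.

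For orthogonality I would show: if $A\in\Aco(\scrA,\Phi,w)$ and $I\in\Fact(\scrI,\Phi,w)$, then $\Hom_{\K(\scrA,\Phi,w)}(A,I)=0$. Set $\cX\defeq\{A\in\K(\scrA,\Phi,w)\mid \Hom_{\K(\scrA,\Phi,w)}(A,\Phi^{m}I[n])=0 \text{ for all }m,n\in\bZ\}$. Since $\Hom$ out of a coproduct is a product and products are exact in the category of abelian groups, $\cX$ is a thick subcategory closed under small coproducts; as $\Aco(\scrA,\Phi,w)$ is by definition the smallest such subcategory containing the totalizations of short exact sequences of $\Fact(\scrA,\Phi,w)$, it suffices to prove $\Tot(0\to E'\to E\to E''\to 0)\in\cX$. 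Unwinding the definitions of $\Hom(-,-)^{\bullet}$ and of $\Tot$, the complex $\Hom\bigl(\Tot(0\to E'\to E\to E''\to 0),\Phi^{m}I[n]\bigr)^{\bullet}$ is the totalization of a bicomplex supported in three columns whose rows are, up to twists by $\Phi$, obtained by applying $\Hom_{\scrA}(-,I_{0})$ and $\Hom_{\scrA}(-,I_{1})$ to the componentwise short exact sequence $0\to E'\to E\to E''\to 0$ in $\scrA$. These rows are exact because $I_{0}$ and $I_{1}$ are injective, so the bicomplex has acyclic totalization; hence $\Tot(0\to E'\to E\to E''\to 0)\in\cX$, which proves the claim. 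In particular every object of $\Fact(\scrI,\Phi,w)$ is right orthogonal to $\Aco(\scrA,\Phi,w)$. By the standard fact that an object right orthogonal to a thick subcategory is local for the corresponding Verdier quotient --- applying $\Hom_{\K(\scrA,\Phi,w)}(-,I)$ to the triangle attached to a roof $X\xleftarrow{s}X'\to I$ with $\mathrm{cone}(s)\in\Aco(\scrA,\Phi,w)$ shows every such roof is equivalent to one with $s=\id$ --- we get $\Hom_{\Dco(\scrA,\Phi,w)}(X,I)\cong\Hom_{\K(\scrA,\Phi,w)}(X,I)$ for all $X$, and taking $X$ in $\Fact(\scrI,\Phi,w)$ yields full faithfulness.

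For essential surjectivity I would first observe that $\Fact(\scrA,\Phi,w)$ has enough factorizations with injective components: for $J\in\scrI$ the factorizations $\mathrm{T}_{1}(J)\defeq\bigl(J\xrightarrow{\id}J\xrightarrow{w(J)}\Phi J\bigr)$ and $\mathrm{T}_{0}(J)\defeq\bigl(J\xrightarrow{w(J)}\Phi J\xrightarrow{\id}\Phi J\bigr)$ have injective components, and naturality of $w$ gives $\Hom_{\Fact}(E,\mathrm{T}_{1}(J))\cong\Hom_{\scrA}(E_{0},J)$ and $\Hom_{\Fact}(E,\mathrm{T}_{0}(J))\cong\Hom_{\scrA}(E_{1},J)$. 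Choosing monomorphisms $E_{0}\hookrightarrow J_{0}$ and $E_{1}\hookrightarrow J_{1}$ into injectives, the associated morphism $E\to\mathrm{T}_{1}(J_{0})\oplus\mathrm{T}_{0}(J_{1})$ is a monomorphism on each component, hence an admissible monomorphism of $\Fact(\scrA,\Phi,w)$ whose target has injective components. Iterating on cokernels produces an exact complex $0\to E\to I^{0}\to I^{1}\to\cdots$ in $\Fact(\scrA,\Phi,w)$ with every $I^{p}$ having injective components. Since $\scrA$ has exact small coproducts and coproducts of injectives are injective, the coproduct totalization $I\defeq\Tot(I^{0}\to I^{1}\to\cdots)$ lies in $\Fact(\scrI,\Phi,w)$, and the cone of the evident morphism $E\to I$ is, up to a shift, the totalization of the exact complex $0\to E\to I^{0}\to I^{1}\to\cdots$. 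Using the standard fact that the coproduct totalization of an exact, bounded-below complex of factorizations is coacyclic (see~\cite{bdfik,posi}), this cone lies in $\Aco(\scrA,\Phi,w)$, so $E\cong I$ in $\Dco(\scrA,\Phi,w)$. Hence the functor is essentially surjective, and combined with the previous paragraph it is an equivalence.

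The step I expect to be the real obstacle is the coacyclicity of the totalization of the exact resolution $0\to E\to I^{0}\to I^{1}\to\cdots$. This is exactly where the two coproduct hypotheses on $\scrA$ are indispensable: exactness of small coproducts is needed so that the relevant dévissage --- expressing the totalization through totalizations of short exact sequences via brutal truncations and a homotopy-colimit/telescope argument, and using that $\Aco(\scrA,\Phi,w)$ is closed under coproducts --- produces genuine distinguished triangles, while injectivity of coproducts of injectives is needed so that the limiting totalization $I$ still has injective components. By contrast, the bicomplex computation underlying the orthogonality statement is routine once the bookkeeping is written out, and the passage from orthogonality to full faithfulness is purely formal.
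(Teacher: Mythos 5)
The paper does not prove this proposition itself; it simply cites \cite[Corollary~2.25]{bdfik}. Your argument correctly reconstructs the standard proof from that reference (cf.\ also \cite{posi}): right-orthogonality of $\Fact(\scrI,\Phi,w)$ against $\Aco(\scrA,\Phi,w)$, using that $\Hom_{\scrA}(-,J)$ with $J$ injective carries the componentwise short exact sequences to short exact sequences, gives full faithfulness, and the factorizations $\mathrm{T}_0(J)$, $\mathrm{T}_1(J)$ together with the coacyclicity of coproduct totalizations of bounded-below exact complexes give essential surjectivity --- which is precisely where the two coproduct hypotheses are used in \cite{bdfik} as well.
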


\begin{prop}\label{proj resol}
Let $\scrP\subset \scrA$ be the subcategory of projective objects in $\scrA$, and assume that~$\scrA$ has enough projectives. Let $\scrC\subset \scrA$ be an abelian subcategory that is preserved by $\Phi$, and let $\scrQ\defeq\scrC\cap \scrP$.
\begin{itemize}\itemsep=0pt
\item[$1.$] Assume that all objects in $\scrC$ are compact in $\scrA$. Then for any $P\in \K(\scrQ,\Phi,w)$ and $A\in \Aco(\scrA,\Phi,w)$ we have
\begin{gather*}
\Hom_{\K(\scrA,\Phi,w)}(P,A)=0.
\end{gather*}
In particular,
the natural functor
\begin{gather*}
\K(\scrQ,\Phi,w)\to\Dco(\scrA,\Phi,w)
\end{gather*}
is fully faithful.

\item[$2.$] If every object in $\scrC$ has a finite projective resolution in $\scrC$, the natural functor
\begin{gather*}
\K(\scrQ,\Phi,w)\to\D(\scrC,\Phi,w)
\end{gather*}
is essentially surjective.
\end{itemize}
\begin{proof}
(1) The latter statement follows from the former one by~\cite[Proposition~B.2]{ls}. The~vani\-shing $\Hom_{\K(\scrA,\Phi,w)}(P,A)=0$ reduces to the case when $A\in \A(\scrA,\Phi,w)$, since the components of~$P$ are compact by our assumption. If $A\in \A(\scrA,\Phi,w)$, the vanishing follows from~\cite[Lemma~2.4]{bdfik}.

(2) This follows from~\cite[Proposition~2.22]{bdfik}.
\end{proof}
\end{prop}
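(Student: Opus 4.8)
The plan is to prove the two parts by different mechanisms: part~(1) by a d\'evissage over the defining generators of $\Aco(\scrA,\Phi,w)$, exploiting that every object of $\K(\scrQ,\Phi,w)$ has \emph{compact projective} components, and part~(2) by explicitly resolving an arbitrary factorization in $\Fact(\scrC,\Phi,w)$ by one whose components lie in $\scrQ$. Throughout I would use that the exact autoequivalence $\Phi$ preserves projectives, hence preserves $\scrQ=\scrC\cap\scrP$ since it preserves $\scrC$.

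For part~(1), I would first note that the ``in particular'' statement is formal once the displayed vanishing is known: $\K(\scrQ,\Phi,w)$ is a triangulated subcategory of $\K(\scrA,\Phi,w)$, so $\Hom_{\K(\scrA,\Phi,w)}(P,A)=0$ for all $P\in\K(\scrQ,\Phi,w)$ and $A\in\Aco(\scrA,\Phi,w)$ also holds after shifting $P$, and the standard Verdier-quotient criterion (\cite[Proposition~B.2]{ls}) then gives that $\K(\scrQ,\Phi,w)\to\K(\scrA,\Phi,w)/\Aco(\scrA,\Phi,w)=\Dco(\scrA,\Phi,w)$ is fully faithful. For the vanishing, fix $P$ and consider the full subcategory of $\K(\scrA,\Phi,w)$ of objects $A$ with $\Hom_{\K(\scrA,\Phi,w)}(P',A)=0$ for every shift $P'$ of $P$. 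The long exact $\Hom$-sequences show it is thick; compactness of the components of $P$ in $\scrA$ (they lie in $\scrC$, whose objects are compact by hypothesis) shows $\Hom_{\K(\scrA,\Phi,w)}(P,-)$ commutes with coproducts, so the subcategory is closed under coproducts. Hence it contains $\Aco(\scrA,\Phi,w)$ provided it contains all totalizations of short exact sequences in $\Fact(\scrA,\Phi,w)$; and for such a totalization $T$ of $0\to A'\to A\to A''\to 0$ the vanishing $\Hom_{\K(\scrA,\Phi,w)}(P,T)=0$ follows because the components of $P$ are projective in $\scrA$, so any factorization map $P\to A''$ lifts componentwise to $P\to A$ and a chase in the morphism complexes makes $\Hom(P,T)^{\bullet}$ acyclic. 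The last two points are exactly \cite[Lemma~2.4]{bdfik}.

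For part~(2), given $E=\bigl(E_1\xrightarrow{\varphi_1}E_0\xrightarrow{\varphi_0}\Phi(E_1)\bigr)$ with $E_0,E_1\in\scrC$, I would build a bounded resolution $0\to P^{-n}\to\cdots\to P^{0}\to 0$ by objects of $\Fact(\scrQ,\Phi,w)$ together with an augmentation $P^{0}\to E$ making $0\to P^{-n}\to\cdots\to P^{0}\to E\to 0$ exact in $\Fact(\scrC,\Phi,w)$, i.e.\ exact in each component compatibly with $\Phi$ and the potential. This is a factorization-version of the Cartan--Eilenberg/horseshoe resolution: one resolves $E_1$ and $E_0$ by objects of $\scrQ$ simultaneously and compatibly with $\varphi_1,\varphi_0$ and the curvature identities, which works because each $E_i$ has a \emph{finite} resolution by objects of $\scrQ=\scrC\cap\scrP$. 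Then $P\defeq\Tot(P^{\bullet})\in\Fact(\scrQ,\Phi,w)$ (in each degree the totalization is a finite direct sum of $\Phi$-twists of components of the $P^{j}$, hence an object of $\scrQ$), and the cone of the induced morphism $P\to E$ is the totalization of the bounded exact complex $0\to P^{-n}\to\cdots\to P^{0}\to E\to 0$, which lies in $\A(\scrC,\Phi,w)$ because breaking the long exact sequence into short ones exhibits it as built from totalizations of short exact sequences. Thus $P\cong E$ in $\D(\scrC,\Phi,w)$, proving essential surjectivity; this is \cite[Proposition~2.22]{bdfik}.

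I expect the main obstacle to be the two technical inputs of part~(1), namely that $\Hom_{\K(\scrA,\Phi,w)}(P,-)$ commutes with coproducts when $P$ has compact components and that it kills totalizations of short exact sequences when $P$ has projective components, the latter resting on the degreewise lifting property of projectives against a degreewise-exact short exact sequence of factorizations; and, in part~(2), the compatible resolution of a curved factorization by factorizations with components in $\scrQ$. Once those are in place, the remaining steps---propagating the vanishing through thick, coproduct-closed subcategories, and identifying $\mathrm{cone}(P\to E)$ with an object of $\A(\scrC,\Phi,w)$---are routine.
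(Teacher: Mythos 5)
Your proposal takes essentially the same route as the paper, which dispatches part~(1) by citing \cite[Proposition~B.2]{ls} for the formal ``in particular'' reduction, then splits the vanishing into the coproduct-closure step (compactness of the components of~$P$) and the step on totalizations of short exact sequences, which is exactly \cite[Lemma~2.4]{bdfik}; and dispatches part~(2) by citing \cite[Proposition~2.22]{bdfik}, the horseshoe-style resolution-and-totalization argument you describe. You unwind the cited references in more detail, but the decomposition of the argument and the key mechanisms (compactness giving closure under coproducts, projectivity killing totalizations, finite $\scrQ$-resolutions giving a bounded resolution of a factorization that one then totalizes) match the paper's proof.
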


\begin{dfn} Let $F\colon \scrA\to \scrB$ be a factored functor with respect to $(\Phi,w)$ and $(\Psi,v)$.

\begin{itemize}\itemsep=0pt
\item[1.] Assume that $\scrA$ has enough injectives and that the coproducts of injectives are injective.
 If $F$ is left exact, we define the {\it right derived functor}
\begin{gather*}
\bR F\colon\ \Dco(\scrA,\Phi,w)\to \Dco(\scrB,\Psi,v)
\end{gather*}
of $F\colon \K(\scrA,\Phi, w)\to\K(\scrB,\Psi,v)$ to be the composition
\[
\begin{tikzcd}
\Dco(\scrA,\Phi,w)\arrow[r,"\sim"]&\K(\scrI,\Phi,w)\arrow[r,"F"]&\K(\scrB,\Psi,v)\arrow[r,"Q"]&\Dco(\scrB,\Psi,v),
\end{tikzcd}
\]
where the first functor is the equivalence in Proposition~\ref{inj resol}, and $Q$ is the natural quotient functor.

 \item[2.] Let $\scrC\subset \scrA$ and $\scrD\subset \scrB$ be abelian subcategories that are preserved by $\Phi$ and $\Psi$ respectively, and assume that the factored functor $F$ restricts to $F\colon \scrC\to \scrD$. Assume that every object in $\scrC$ is compact in $\scrA$ and has a finite projective resolution in $\scrC$, and that the natural functor $\D(\scrC,\Phi,w)\to \Dco(\scrA,\Phi,w)$ is fully faithful.
If $F$ is right exact, we define the {\it left derived functor}
\begin{gather*}
\bL F\colon\ \D(\scrC,\Phi,w)\to \D(\scrD,\Psi,v)
\end{gather*}
of $F\colon \K(\scrC,\Phi,w)\to \K(\scrD,\Psi,v)$ to be the composition
\[
\begin{tikzcd}
\D(\scrC,\Phi,w)\arrow[r,"\sim"]&\K(\scrQ,\Phi,w)\arrow[r,"F"]&\K(\scrD,\Psi,v)\arrow[r,"Q"]&\D(\scrD,\Psi,v),
\end{tikzcd}
\]
where the first functor is the equivalence in Proposition~\ref{proj resol}.
\end{itemize}
\end{dfn}

\subsection{Quasi-coherent modules over sheaves of algebras}

In this subsection, we recall fundamental properties of quasi-coherent modules over sheaves of~algebras over schemes.

Let $X$ be a scheme. An $\cO_X$-module $\cF$ is said to be {\it quasi-coherent} if for any point $x\in X$, there are an open neighborhood $U$ of $x$
and an exact sequence
\begin{gather*}
\cO_U^{\oplus I}\to \cO_U^{\oplus J}\to \cF|_U\to0,
\end{gather*}
where $I$ and $J$ are sets. A quasi-coherent $\cO_X$-module $\cF$ is called a {\it coherent} $\cO_X$-module, if for every affine open subset $U=\Spec R$ of $X$, $\Gamma(U,\cF)$ is a finitely generated $R$-module. We denote by $\Qcoh X$ (resp.\ $\coh X$) the category of quasi-coherent (resp.\ coherent) $\cO_X$-modules.

\begin{dfn}
An {\it $X$-algebra} is a sheaf $\cA$ of not necessarily commutative algebras together with a morphism $\uprho\colon \cO_X\to \cA$ of sheaves of algebras such that the image of $\uprho$ lies in the center of~$\cA$ and that $\cA$ is a quasi-coherent $\cO_X$-module. An $X$-algebra $\uprho\colon \cO_X\to \cA$ is said to be {\it coherent} if $\cA$ is a coherent $\cO_X$-module. A {\it morphism} of $X$-algebras is a morphism $\varphi\colon \cA\to \cB$ of~sheaves of algebras that is $\cO_X$-linear.

For an $X$-algebra $\uprho\colon \cO_X\to\cA$, a {\it quasi-coherent $($resp.\ coherent$)$ $\cA$-module} is a right $\cA$-mo\-dule~$\cM$ that is a quasi-coherent (resp.\ coherent) $\cO_X$-module. For quasi-coherent $\cA$-modu\-les~$\cM$ and~$\cN$, a {\it morphism} from $\cM$ to $\cN$ is a morphism of sheaves of right $\cA$-modules. We~denote~by\vspace{-.7ex}
\begin{gather*}
\Qcoh \cA
\end{gather*}
the category of quasi-coherent $\cA$-modules, and we write $\coh \cA$ for the full subcategory of coherent $\cA$-modules.
\end{dfn}

\begin{rem}
Since the category of right $\cA$-modules and the category $\Qcoh X$ are both abelian, $\Qcoh \cA$ is also an abelian category. Furthermore, in Proposition~\ref{grothen prop} we will see that $\Qcoh \cA$ is~a~Grothendieck category.
\end{rem}

Let $\varphi\colon \cA\to \cB$ be a morphism of $X$-algebras. For an object $\cN\in \Qcoh \cB$, we define a~qua\-si-coherent $\cA$-module $\cN_{\varphi}$ by the composition\vspace{-.7ex}
 \begin{gather*}
 \cN\times \cA\xrightarrow{\id\times \varphi} \cN\times \cB\to \cN,
 \end{gather*}
 where the second morphism is the right $\cB$-action of $\cN$. This defines the functor\vspace{-.7ex}
\begin{equation}\label{restriction}
(-)_{\varphi}\colon\ \Qcoh \cB\to \Qcoh \cA,
\end{equation}
and we call the functor $(-)_{\varphi}$ the {\it restriction by $\varphi$}.

Conversely, for an object $\cM\in \Qcoh \cA$, the tensor product
$\cM\otimes_{\cA}\cB$ has a natural right $\cB$-action, and this defines the functor\vspace{-.7ex}
\begin{equation}\label{extension}
(-)\otimes_\cA\cB\colon\ \Qcoh \cA\to \Qcoh \cB,
\end{equation}
and we call this functor the {\it extension by $\varphi$}.
It is standard that we have an adjunction\vspace{-.7ex}
\begin{gather*}
(-)\otimes_\cA\cB \dashv (-)_{\varphi}.
\end{gather*}

Although the following propositions might be well known, we give the proofs for reader's convenience.
\begin{prop}
Let $X$ be a scheme, and $\uprho\colon \cO_X\to\cA$ an $X$-algebra.\vspace{-.5ex}
\begin{itemize}\itemsep=-1pt
\item[$1.$] For every $\cM\in \Qcoh \cA$ and every point $x\in X$, there are an open neighborhood $U$ of $x$ and a surjective $\cA|_U$-linear map\vspace{-.7ex}
\begin{gather*}
\cA|_U^{\oplus J}\surjto \cM|_U,
\end{gather*}
where $J$ is a set.
\item[$2.$] A right $\cA$-module $\cM$ is quasi-coherent if and only if for every point $x\in X$ there are an~open neighborhood $U$ and an exact sequence\vspace{-.7ex}
 \begin{gather*}
 \cA|_U^{\oplus I}\to \cA|_U^{\oplus J}\to \cM|_U\to 0.
 \end{gather*}
 in the category of right $\cA$-modules.
\end{itemize}
 \begin{proof}
 (1) Let $\cM\in \Qcoh \cA$ and $x\in X$. Then there are an open neighborhood $U$ of $x$, a set $J$ and an surjective morphism
 $\pi\colon \cO_U^{\oplus J}\surjto\cM_{\uprho}|_U$
 in $\Qcoh X$. Since the functor $(-)\otimes_{\cO_U}\cA|_U\colon \Qcoh U\to \Qcoh \cA|_U$ has a right adjoint functor, it is right exact and commutes with small direct sums. Hence we have a natural isomorphism $\cO_U^{\oplus J}\otimes _{\cO_U}\cA_U\cong \cA|_U^{\oplus J}$ and the morphism
{\samepage \begin{gather*}
 \pi\otimes_{\cO_U}\cA|_U\colon\ \cA_U^{\oplus J}\to \cM_{\uprho}|_U\otimes_{\cO_U}\cA|_U
 \end{gather*}
 is surjective. Composing this surjection with a natural surjective morphism
 $\cM_{\uprho}|_U\otimes_{\cO_U}\cA|_U\surjto \cM|_U$; $m\otimes a\mapsto ma$, we have a surjective morphism $\cA|_U^{\oplus J}\surjto \cM|_U$.}

 (2) ($\Rightarrow$) Assume that a right $\cA$-module $\cM$ is quasi-coherent and let $x\in X$ be a point. Then by (1) there are an open neighbborhood $U$ of $x$ and a surjective $\cA|_U$-linear map $p \colon \cA|_U^{\oplus J}\surjto \cM|_U$. Denote by $\cK$ the Kernel of $p$ and by $i\colon \cK\hookto \cA|_U^{\oplus J}$ the natural inclusion. Then $\cK$ is also a quasi-coherent $\cA$-module, and thus, by shrinking $U$ if necessary, we have a surjective $\cA|_U$-linear map $q \colon \cA|_U^{\oplus I}\surjto \cK|_U$. Then the sequence
 \begin{gather*}
 \cA|_U^{\oplus I}\xrightarrow{i \circ q}\cA|_U^{\oplus J}\xrightarrow{p}\cM|_U\to 0
 \end{gather*}
is an exact.

($\Leftarrow$) Assume that a right $\cA$-module $\cM$ is locally isomorphic to the cokernel of a morphism between free modules. Then, since $\cA$ is a quasi-coherent $\cO_X$-module and $\cM$ is locally isomorphic to the cokernel of a morphism of quasi-coherent modules, $\cM$ is a locally quasi-coherent module. Thus $\cM$ is a quasi-coherent $\cO_X$-module.
 \end{proof}
\end{prop}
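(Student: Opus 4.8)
The plan is to reduce both statements to two standard facts about quasi-coherent $\cO_X$-modules on a scheme, bridged by the extension--restriction adjunction $(-)\otimes_{\cO_X}\cA\dashv(-)_\uprho$ recalled above: first, that every quasi-coherent $\cO_X$-module is locally a cokernel of a morphism of free $\cO_X$-modules; second, that on a scheme the quasi-coherent modules are closed under arbitrary direct sums and under kernels and cokernels of morphisms.

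For part~(1), I would view $\cM\in\Qcoh\cA$ as the quasi-coherent $\cO_X$-module $\cM_\uprho$ and, near a given point $x$, choose an open $U$ together with an $\cO_U$-linear surjection $\pi\colon\cO_U^{\oplus J}\surjto\cM_\uprho|_U$. Applying the extension functor $(-)\otimes_{\cO_U}\cA|_U$, which is a left adjoint and hence right exact and compatible with coproducts, turns $\cO_U^{\oplus J}$ into $\cA|_U^{\oplus J}$ and $\pi$ into a surjective $\cA|_U$-linear map $\cA|_U^{\oplus J}\surjto\cM_\uprho|_U\otimes_{\cO_U}\cA|_U$. Composing with the counit $\cM_\uprho|_U\otimes_{\cO_U}\cA|_U\to\cM|_U$, $m\otimes a\mapsto ma$, which is surjective since $m\otimes 1\mapsto m$, then yields the desired surjection $\cA|_U^{\oplus J}\surjto\cM|_U$.

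For part~(2), I would obtain ``$\Rightarrow$'' by applying part~(1) twice: starting from an $\cA|_U$-linear surjection $p\colon\cA|_U^{\oplus J}\surjto\cM|_U$, the sheaf-theoretic kernel $\cK$ of $p$ is a right $\cA|_U$-submodule of $\cA|_U^{\oplus J}$ which, being the kernel of a morphism of quasi-coherent $\cO_U$-modules, is again quasi-coherent; hence $\cK\in\Qcoh\cA|_U$, and shrinking $U$ and applying part~(1) to $\cK$ produces a surjection $q\colon\cA|_U^{\oplus I}\surjto\cK|_U$, so that $\cA|_U^{\oplus I}\xrightarrow{\,i\circ q\,}\cA|_U^{\oplus J}\xrightarrow{\,p\,}\cM|_U\to 0$ is exact, where $i\colon\cK\hookto\cA|_U^{\oplus J}$ is the inclusion (restriction to the smaller open keeps $p$ surjective). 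For ``$\Leftarrow$'', if $\cM$ is locally the cokernel of a morphism $\cA|_U^{\oplus I}\to\cA|_U^{\oplus J}$ of free $\cA$-modules, then since $\cA$ is quasi-coherent over $\cO_X$ these free modules are quasi-coherent $\cO_U$-modules, the morphism between them is $\cO_U$-linear, and therefore its cokernel $\cM|_U$ is a quasi-coherent $\cO_U$-module; as quasi-coherence is local, $\cM$ is quasi-coherent.

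The whole argument is essentially bookkeeping, so I do not expect a genuine obstacle; the one point that deserves care is the interaction between the two module structures on $\cM$ --- namely that the counit $\cM_\uprho\otimes_{\cO_X}\cA\to\cM$ is surjective, and that the kernel of an $\cA$-linear map, computed as a sheaf, coincides with the kernel of the underlying $\cO_X$-linear map and hence inherits quasi-coherence.
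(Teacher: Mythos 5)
Your proposal is correct and follows essentially the same route as the paper: local $\cO_U$-surjection, extension along $\uprho$ via right exactness of the left adjoint $(-)\otimes_{\cO_U}\cA|_U$, composition with the multiplication counit for part (1), then iterating on the kernel for (2)($\Rightarrow$) and appealing to local quasi-coherence for (2)($\Leftarrow$). Nothing to flag.
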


\subsection{Equivariant sheaves}\label{equiv section}
We briefly recall the basics of equivariant quasi-coherent sheaves. For more details, see, for example,~\cite[Section~2]{bfk}.

Let $G$ be an algebraic group, and denote by $\varepsilon\colon \Spec k\to G$, $\mu\colon G\times G\to G$ and $\iota\colon G\to G$ the morphisms defining the identity, the multiplication and the inversion of $G$.
Let $X$ be a scheme with an algebraic left $G$-action $\sigma\colon G\times X \to X$. We denote by $\pi\colon G\times X\to X$ the natural projection, and we write
\begin{gather*}
\varepsilon_X\colon\ X\to G\times X
\end{gather*}
for the composition $X\simto \Spec k\times X\xrightarrow{ \varepsilon\times \id_X}G\times X$. \begin{dfn}
A {\it $G$-equivariant structure} on a quasi-coherent sheaf $\cF\in \Qcoh X$ is an isomorphism
\begin{gather*}
\uptheta\colon\ \pi^*\cF\simto \sigma^*\cF
\end{gather*}
of $\cO_{G\times X}$-modules such that the equations
\begin{equation}\label{compati}
\varphi^*\uptheta\circ (\id_G\times\pi)^*\uptheta=(\mu\times\id_X )^*\uptheta \qquad\text{and}\qquad
\varepsilon_X^*\uptheta=\id_{\cF}
\end{equation}
hold, where $\varphi\colon G\times G\times X\to G\times X$ is the morphism defined by $\varphi(g,h,x)\defeq(h,gx)$.
\end{dfn}

\begin{rem}\label{closed point isom}
For any closed point $g\in G$, there is the corresponding morphism $g\colon \Spec k\to G$, and this induces the morphism $g\times\id_X\colon X\to G\times X$. The pull-back of a $G$-equivariant structure $\uptheta\colon \pi^*\cF\simto \sigma^*\cF$ by $g\times \id_X$ defines the isomorphism
\begin{gather*}
\uptheta_g\colon\ \cF\simto \sigma_g^*\cF,
\end{gather*}
where $\sigma_g\colon X\to X$ is the isomorphism defined by $\sigma_g\defeq \sigma\circ (g\times \id_X)$. The pull-back of the first equation in (\ref{compati}) by the morphism $ g\times h\times\id_X\colon X\to G\times G\times X$ implies the equation
\begin{gather*}
\sigma_g^*\uptheta_h\circ \uptheta_g=\uptheta_{gh},
\end{gather*}
and the pull-back of the second equation is nothing but the equation $\uptheta_{1_G}=\id_{\cF}$.
\end{rem}

A {\it $G$-equivariant quasi-coherent sheaf} \,is a pair $(\cF, \uptheta)$ of a quasi-coherent sheaf $\cF\in\Qcoh X$ and a $G$-equivariant structure $\uptheta$ on $\cF$. We say that $(\cF,\uptheta)$ is {\it coherent} (resp.\ {\it locally free}) if $\cF$ is coherent (resp.\ locally free). If there is no risk of confusion, a $G$-equivariant quasi-coherent sheaf $(\cF, \uptheta)$ is denoted simply by $\cF$.

Let $\big(\cF,\uptheta^{\cF}\big)$ and $\big(\cG,\uptheta^{\cG}\big)$ be $G$-equivariant quasi-coherent sheaves on $X$. A morphism $\varphi$: $\cF\to \cG$ of $\cO_X$-modules is said to be {\it $G$-equivariant} if the diagram
\begin{equation}\label{diag}
\begin{tikzcd}
\pi^*\cF\arrow[rr,"\pi^*\varphi"]\arrow[d,"\uptheta^{\cF}"']&&\pi^*\cG\arrow[d," \uptheta^{\cG}"]\\
\sigma^*\cF\arrow[rr, "\sigma^*\varphi"]&&\sigma^*\cG
\end{tikzcd}\end{equation}
is commutative.

\begin{rem}\label{cl pt}
Let $\big(\cF,\uptheta^{\cF}\big)$ and $\big(\cG,\uptheta^{\cG}\big)$ be $G$-equivariant quasi-coherent sheaves on $X$. Assume that $X$ is of finite type over $k$. Then the set of closed points of $G\times X$ is equal to the set of pairs $(x,g)$ of closed points $x\in X$ and $g\in G$. This implies that a morphism $\varphi\colon\cF\to \cG$ of the $\cO_X$-modules is $G$-equivariant if and only if for any closed point $g\in G$ the pull-back
\begin{equation*}
\begin{tikzcd}
\cF\arrow[rr,"\varphi"]\arrow[d,"\uptheta^{\cF}_g"']&&\cG\arrow[d," \uptheta^{\cG}_g"]\\
\sigma_g^*\cF\arrow[rr, "\sigma^*_g\varphi"]&&\sigma_g^*\cG
\end{tikzcd}
\end{equation*}
of the diagram (\ref{diag}) by $g\times\id_X $ is commutative.
\end{rem}

$G$-equivariant quasi-coherent sheaves on $X$ and $G$-equivariant morphisms define the abelian category
\begin{gather*}
\Qcoh_GX.
\end{gather*}
It is standard that the category $\Qcoh_GX$ is equivalent to the category $\Qcoh [X/G]$ of quasi-coherent sheaves on the quotient stack $[X/G]$;
\begin{equation}\label{equiv stack}
\Qcoh_GX\cong \Qcoh [X/G],
\end{equation}
 and for $\cF\in \Qcoh_GX$ we denote by $[\cF/G]\in \Qcoh [X/G]$ the image of $\cF$ by the equivalence \eqref{equiv stack}. We denote by $\coh_GX$ the subcategory of $G$-equivariant coherent sheaves on $X$. This category is an exact subcategory, and if $X$ is noetherian, it is an abelian subcategory. Furthermore, if $X$ is of finite type over $k$, by Remark~\ref{cl pt}, we have
\begin{equation}\label{G hom}
\Hom_{\Qcoh_GX}\big(\big(\cF,\uptheta^{\cF}\big),\big(\cG,\uptheta^{\cG}\big)\big) =\Hom_{\Qcoh X}(\cF,\cG)^G,
\end{equation}
where the right hand side $\Hom_{\Qcoh X}(\cF,\cG)^G$ is the $G$-invariant subset with respect to the $G$-action on $\Hom_{\Qcoh X}(\cF,\cG)$ defined by
$g\cdot\varphi\defeq (\uptheta^{\cG}_g)^{-1}\circ\sigma_g^*\varphi\circ\uptheta^{\cF}_g$.

Let $Y$ be another $G$-scheme, and $f\colon X\to Y$ a $G$-equivariant morphism that is quasi-compact and quasi-separated. Then $f$ induces the direct image\vspace{-.5ex}
\begin{gather*}
f_*\colon\ \Qcoh_G X\to \Qcoh_G Y
\end{gather*}
and the inverse image\vspace{-.5ex}
\begin{gather*}
f^*\colon\ \Qcoh_G Y\to \Qcoh_GX.
\end{gather*}
It is standard that $f^*$ is a left adjoint functor of $f_*$.

Tensor products and sheaf Hom define bi-functors
\begin{gather*}
(-)\otimes_{X}(-)\colon\ \Qcoh_G X\times \Qcoh_G X\to \Qcoh_G X,
\\[.5ex]
\cHom(-,-)\colon\ (\coh_G X)^{\rm op}\times \Qcoh_G X\to \Qcoh_G X,
\end{gather*}
and it is also standard that for any $\cF\in \coh_GX$, the functor $\cHom(\cF,-)\colon\Qcoh_G X\to \Qcoh_G X$ is right adjoint to the functor $(-)\otimes_{X}\cF\colon\Qcoh_G X\to \Qcoh_G X$.

\subsection{Tilting objects and derived equivalences}

We recall that tilting objects induce derived equivalences.
Throughout this subsection, $\scrA$ is an abelian category with small coproducts and enough injectives, and we assume that small coproducts of families of short exact sequences in $\scrA$ are exact. These properties of $\scrA$ are satisfied if $\scrA$ is a Grothendieck category.

\begin{dfn} Let $T\in \scrA$ be an object. The object $T$ is called a {\it tilting object} if the following conditions are satisfied:
\begin{itemize}\itemsep=0pt
\item[$1.$] $\Ext_{\scrA}^i(T,T)=0$ for all $i>0$.

\item[$2.$] $T$ is {\it compact} in $\D(\scrA)$, i.e., the natural map
\begin{gather*}
\bigoplus_{i\in \cI}\Hom_{\D(\scrA)}(T,A_i^{\bullet})\to\Hom_{\D(\scrA)}\Big(T,\bigoplus_{i\in\cI}A_i^{\bullet}\Big)
\end{gather*}
is an isomorphism of abelian groups for any set $\cI$ and any family $\{A_i^{\bullet}\}_{i\in I}$.

\item[$3.$] $T$ is a {\it generator} of $\D(\scrA)$, i.e., for an object $A^{\bullet}\in \D(\scrA)$, \[\RHom(T,A^{\bullet})\cong0$ in $\D\big(\Mod\End_{\scrA}(T)\big)\] implies $A^{\bullet}\cong0$ in $\D(\scrA)$.
\end{itemize}
We call $T$ a {\it partial tilting object} if the conditions (1) and (2) are satisfied.
\end{dfn}

For an object $T\in \scrA$, we denote by
\begin{gather*}
\Add T
\end{gather*}
the smallest additive subcategory containing all direct summands of small coproducts of $T$. We~write $\add T\subset \Add T$ for the subcategory consisting of direct summands of finite coproducts of~$T$. If~$T$ is a partial tilting object, then for arbitrary sets $\cI$ and $\cJ$, we have isomorphisms
\begin{align*}
\Ext^n_{\scrA}\Big(\bigoplus_{i\in \cI}T,\bigoplus_{j\in \cJ}T\Big) &\cong\Hom_{\D(\scrA)}\Big(\bigoplus_{i\in \cI}T,\bigoplus_{j\in \cJ}(T[n])\Big)
\\
&\cong \prod_{i\in \cI}\Hom_{\D(\scrA)}\Big(T,\bigoplus_{j\in \cJ}(T[n])\Big)
\\
&\cong \prod_{i\in\cI}\Big(\bigoplus_{j\in\cJ}\Hom_{\D(\scrA)}(T,T[n])\Big).
\end{align*}
This implies that for any $X,Y\in \Add T$, we have $\Ext_{\scrA}^n(X,Y)=0$ for any $n>0$. Thus any short exact sequence $0\to X\to Z\to Y\to0$ in $\scrA$ with $X,Y\in \Add T$ splits, and in particular $\Add T$ is an exact subcategory of $\scrA$.

In the remaining of this subsection, $T\in\scrA$ is a partial tilting object, and we assume that every right module over the endomorphism ring $\Lambda\defeq\End_{\scrA}(T)$ is of finite projective dimension. Consider the functor
 \begin{gather*}
{\F}\defeq\Hom_{\scrA}(T,-)\colon\ \scrA\to\Mod\Lambda
 \end{gather*}
and assume that
\begin{itemize}\itemsep=0pt
 \item[$(i)$] $\F$ has a left adjoint functor $\G\colon\Mod\Lambda\to \scrA$ such that the adjunction morphisms $\sigma({\Lambda})\colon \Lambda\to \F\G(\Lambda)$ and $\tau(T)\colon \G\F(T)\to T$ are isomorphisms.

 \item[$(ii)$] The right derived functor $\bR \F\colon \D^+(\scrA)\to \D^+(\Mod\Lambda)$ restricts to the functor
\begin{gather*}
 \bR \F\colon\ \Db(\scrA)\to \Db(\Mod\Lambda)
 \end{gather*}
 between bounded derived categories.
 \end{itemize}

 Note that both of the functors $\F$ and $\G$ commute with small coproducts since $T$ is compact and $\G$ admits a right adjoint functor. In particular, $\F$ and $\G$ restrict to the following functors
 \begin{gather*}
 \F\colon\ \Add T\to\Proj\Lambda,
 \\
\G\colon\ \Proj\Lambda\to \Add T,
 \end{gather*}
where $\Proj \Lambda$ is the category of projective right $\Lambda$-modules.
\begin{lem}\label{add proj}
The functor $\F\colon\Add T\to\Proj\Lambda$ is an equivalence, and $\G\cong \F^{-1}$.
\begin{proof}
The adjunction morphisms $\sigma({P})\colon P\to \F\G(P)$ and $\tau(S)\colon \G\F(S)\to S$ are bijective for any $P\in\Mod\Lambda$ and $S\in \Add T$, since $\sigma({\Lambda})$ and $\tau({T})$ are bijective and $\Proj \Lambda=\Add \Lambda$. Hence, the functors $\F$ and $\G$ are equivalences, and $\G\cong \F^{-1}$.
\end{proof}
\end{lem}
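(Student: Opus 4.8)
The plan is to deduce the statement from the formal principle that an adjunction $\G\dashv\F$ restricts to an equivalence between the full subcategory of objects on which the counit $\tau$ is an isomorphism and the full subcategory of objects on which the unit $\sigma$ is an isomorphism. Granting this, the whole lemma reduces to two claims: that $\sigma(P)\colon P\to\F\G(P)$ is an isomorphism for every $P\in\Proj\Lambda$, and that $\tau(S)\colon\G\F(S)\to S$ is an isomorphism for every $S\in\Add T$. Since, by the restricted functors displayed just before the lemma, $\F$ carries $\Add T$ into $\Proj\Lambda$ and $\G$ carries $\Proj\Lambda$ into $\Add T$, these two claims immediately give that $\F|_{\Add T}$ and $\G|_{\Proj\Lambda}$ are mutually quasi-inverse, hence that $\F\colon\Add T\to\Proj\Lambda$ is an equivalence with $\G\cong\F^{-1}$.

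For the first claim I would begin from hypothesis $(i)$, which gives that $\sigma(\Lambda)$ is an isomorphism. Using that both $\F$ and $\G$ commute with small coproducts (noted just before the lemma; this uses compactness of $T$ and the existence of a right adjoint of $\G$), and applying naturality of $\sigma$ to the coproduct inclusions, I would check that under the canonical identification $\F\G\big(\bigoplus_i M_i\big)\cong\bigoplus_i\F\G(M_i)$ one has $\sigma\big(\bigoplus_i M_i\big)=\bigoplus_i\sigma(M_i)$, so that $\sigma\big(\Lambda^{(I)}\big)$ is an isomorphism for every index set $I$. Finally, since $\Proj\Lambda=\Add\Lambda$, any $P\in\Proj\Lambda$ is a retract of a free module $\Lambda^{(I)}$, and applying naturality of $\sigma$ to the idempotent of $\Lambda^{(I)}$ cutting out $P$ exhibits $\sigma(P)$ as a retract, in the arrow category, of the isomorphism $\sigma\big(\Lambda^{(I)}\big)$; a retract of an isomorphism is an isomorphism, so $\sigma(P)$ is an isomorphism.

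The second claim I would prove by the identical argument with the roles of $\F,\G$ and of $\Lambda,T$ interchanged: $\tau(T)$ is an isomorphism by $(i)$, $\tau$ is compatible with coproducts by the same reasoning, so $\tau\big(T^{(I)}\big)$ is an isomorphism, and every $S\in\Add T$ is a retract of some $T^{(I)}$, whence $\tau(S)$ is a retract of $\tau\big(T^{(I)}\big)$ and therefore an isomorphism.

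The argument is entirely formal and I do not expect a serious obstacle. The one point that deserves genuine care — and on which everything downstream rests — is the verification that $\sigma$ and $\tau$ are \emph{additive} in the above sense, i.e.\ that they are compatible with the identifications of $\F\G$ and $\G\F$ with direct sums; but this is a routine diagram chase from the naturality of the unit and counit together with the explicit way in which $\F$ and $\G$ commute with coproducts.
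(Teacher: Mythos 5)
Your proof is correct and takes essentially the same approach as the paper's. The paper compresses the argument into a single sentence — ``since $\sigma(\Lambda)$ and $\tau(T)$ are bijective and $\Proj\Lambda=\Add\Lambda$'' — which is exactly your coproduct-plus-retract reasoning; you have simply spelled out the implicit steps (commutation of $\sigma,\tau$ with coproducts via the fact that $\F,\G$ commute with small coproducts, and stability of isomorphisms under retracts in the arrow category).
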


By our assumptions, the functor $\F$ defines the right derived functor
\begin{gather*}
\bR\F\colon\ \Db(\scrA)\to \Db(\Mod\Lambda)
\end{gather*}
between bounded derived categories, and the functor $\G$ also defines the left derived functor
\begin{gather*}
\bL\G\colon\ \Db(\Mod\Lambda)\to\Db(\scrA)
\end{gather*}
between bounded derived categories.
The following is well known to experts.

\begin{thm}\label{general tilting}
The functor $\bL \G\colon \Db(\Mod\Lambda)\to\Db(\scrA)$
is fully faithful. Furthermore, if $T$ is a tilting object, then $\bL \G$ is an equivalence and $\bR \F\cong (\bL \G)^{-1}$.
\begin{proof}
 We have the following commutative diagram
\[
\begin{tikzcd}
\Kb(\Proj\Lambda)\arrow[rr,"\G"]\arrow[d,]&&\Kb(\Add T)\arrow[d,]\\
\Db(\Mod\Lambda)\arrow[rr, "\bL \G\,\,\,\,\,\,"]&&\Db(\scrA),
\end{tikzcd}
\]
where the vertical arrows are the natural quotient functors. The left vertical functor is an equi\-va\-lence, since every right $\Lambda$-module has a finite projective resolution. Moreover, by Lemma~\ref{add proj}, the top horizontal functor is also an equivalence. Hence, for the former assertion, it is enough to show that the natural functor
\begin{gather*}
\Kb(\Add T)\to \Db(\scrA)
\end{gather*}
is fully faithful, and this follows from an identical argument as in~\cite[Chapter~III, Lemma~2.1]{happel}.

For the latter assertion, assume that $T$ is a tilting object. It is enough to prove that $\bR \F$ is also fully faithful. For any $A^{\bullet}\in \D(\scrA)$, consider the following triangle
\begin{gather*}
\bL \G \bR \F(A^{\bullet})\xrightarrow{\tau}A^{\bullet}\to C(\tau)\to \bL \G \bR \F(A^{\bullet})[1],
\end{gather*}
where $\tau$ is the adjunction morphism. Applying the functor $\bR \F$ to the above triangle and using the natural isomorphism $\bR \F\circ \,\bL \G\cong \id$, we see that $\bR \F(C(\tau))\cong 0$ in $\Db(\Mod\Lambda)$. Since $T$ is a~generator of $\D(\scrA)$, this implies that $C(\tau)\cong 0$ in $\D(\scrA)$. Hence the adjunction morphism $\tau$ is an~isomorphism. This completes the proof.
\end{proof}
\end{thm}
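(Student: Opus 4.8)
The plan is to realize $\bL\G$ as the classical tilting functor at the level of bounded homotopy categories, and then to run a generator argument for the second assertion. First I would record the commutative square
\[
\begin{tikzcd}
\Kb(\Proj\Lambda)\arrow[rr,"\G"]\arrow[d]&&\Kb(\Add T)\arrow[d]\\
\Db(\Mod\Lambda)\arrow[rr,"\bL\G"]&&\Db(\scrA),
\end{tikzcd}
\]
in which the vertical arrows are the canonical quotient functors. The left vertical functor is an equivalence because, by hypothesis, every right $\Lambda$-module has finite projective dimension, so every object of $\Db(\Mod\Lambda)$ admits a bounded projective resolution; the top horizontal functor is an equivalence by Lemma~\ref{add proj}, which identifies $\G\colon\Proj\Lambda\to\Add T$ as an equivalence. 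Hence full faithfulness of $\bL\G$ is equivalent to full faithfulness of the canonical functor $\Kb(\Add T)\to\Db(\scrA)$.

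The heart of the argument, and the step I expect to be the main obstacle, is proving that $\Kb(\Add T)\to\Db(\scrA)$ is fully faithful. The key input is already available above: for every $n>0$ and all $X,Y\in\Add T$ one has $\Ext^n_{\scrA}(X,Y)=0$, which follows from the partial tilting hypothesis on $T$ together with its compactness. Granting this self-orthogonality of $\Add T$, the statement is the classical one — for bounded complexes $P^\bullet,Q^\bullet$ over $\Add T$ the canonical map $\Hom_{\Kb(\Add T)}(P^\bullet,Q^\bullet)\to\Hom_{\Db(\scrA)}(P^\bullet,Q^\bullet)$ is bijective — and I would establish it by a dévissage on the width of the complexes, using stupid truncations and the resulting long exact sequences, the $\Ext$-vanishing annihilating the higher obstruction and ambiguity terms. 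This is precisely the argument of \cite[Chapter~III, Lemma~2.1]{happel}, transcribed into the present generality; essentially all the real work sits here.

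For the second assertion, assume $T$ is a tilting object, so that $T$ generates $\D(\scrA)$. Since $\bL\G$ is fully faithful and $\bR\F$ is its right adjoint (hypothesis (i) gives the adjunction $\G\dashv\F$, and hypothesis (ii) ensures $\bR\F$ preserves bounded complexes), the unit $\id\to\bR\F\circ\bL\G$ is an isomorphism, so it suffices to show the counit $\tau\colon\bL\G\circ\bR\F\to\id$ is an isomorphism. For $A^\bullet\in\Db(\scrA)$ I would complete $\tau$ to a triangle
\[
\bL\G\bR\F(A^\bullet)\xrightarrow{\tau}A^\bullet\to C(\tau)\to\bL\G\bR\F(A^\bullet)[1],
\]
apply $\bR\F$, and use $\bR\F\circ\bL\G\cong\id$ to conclude $\bR\F(C(\tau))\cong0$, that is, $\RHom(T,C(\tau))\cong0$. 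Since $T$ generates $\D(\scrA)$, this forces $C(\tau)\cong0$, hence $\tau$ is an isomorphism. Therefore $\bL\G$ is essentially surjective, hence an equivalence, with quasi-inverse $\bR\F$. Apart from the Happel-type lemma above, every remaining ingredient — the adjunction with its unit/counit invertible on $\Lambda$ and on $T$, the fact that $\F$ and $\G$ commute with small coproducts, and the boundedness of $\bR\F$ — is already in place from the setup preceding the theorem.
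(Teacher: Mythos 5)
Your proof mirrors the paper's almost step for step: the same commutative square reducing to full faithfulness of $\Kb(\Add T)\to\Db(\scrA)$ via Happel's lemma, and the same triangle-plus-generator argument showing the counit is an isomorphism. The only cosmetic difference is that you phrase the second part as essential surjectivity of $\bL\G$, while the paper phrases it as full faithfulness of $\bR\F$; these are the same statement once one knows the unit is invertible.
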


\begin{rem}
Our assumption that every $\Lambda$-module has a finite projective resolution might be weakened. However, the result under our assumption is enough for our purpose.
\end{rem}

\section{Equivariant algebras and equivariant tilting modules}

In this section, we introduce equivariant algebras and equivariant modules, and define seve\-ral functors between equivariant modules that are generalizations of functors in~\cite[Section~2]{bfk}. We~also show that equivariant tilting modules induce derived equivalences of equivariant modules, which are simultaneous generalizations of tilting equivalences induced by tilting bundles on schemes and tilting modules over noncommutative algebras.

\subsection{Equivariant modules over equivariant algebras}

In this subsection, we give the definitions of equivariant algebras and equivariant modules, and~dis\-cuss basic properties.
Notation is the same as in Section~\ref{equiv section}. Let $\uprho\colon \cO_X\to \cA$ be an $X$-algebra.

\begin{dfn}
A {\it $G$-equivariant structure} on the $X$-algebra $\uprho\colon \cO_X\to\cA$ is an isomorphism
\begin{gather*}
\uptheta^{\cA}\colon\ \pi^*\cA\simto \sigma^*\cA
\end{gather*}
 of sheaves of algebras on $G\times X$
such that it is $\cO_{G\times X}$-linear and it gives a $G$-equivariant structure on the $\cO_X$-module $\cA$. The pair $\big(\cA,\uptheta^{\cA}\big)$ is called a {\it $G$-equivariant algebra over $X$}, or {\it $G$-equivariant $X$-algebra}. We say that $\big(\cA,\uptheta^{\cA}\big)$ is {\it coherent} if the underlying $X$-algebra $\cA$ is coherent $\cO_X$-module. We write simply $\cA$ for $\big(\cA,\uptheta^{\cA}\big)$ if no confusion seems likely to occur.
\end{dfn}

\begin{rem}\label{linearity}
Note that for an isomorphism $\upphi \colon \pi^*\cA\simto \sigma^*\cA$ of sheaves of algebras which defines a $G$-equivariant structure on the quasi-coherent sheaf $\cA$, $\upphi$ is $\cO_{G\times X}$-linear if and only if the morphism $\uprho\colon \cO_X\to \cA$ is a $G$-equivariant morphism of $G$-equivariant quasi-coherent sheaves $(\cO_X,\uptheta^{\rm can})$ and $(\cA,\upphi)$, where $\uptheta^{\rm can}$ is the canonical equivariant structure on the structure sheaf~$\cO_X$ induced by the $G$-action on $X$.
\end{rem}

Let $X=\Spec R$ be an affine scheme with an action from an affine algebraic group $G$, and set $R_G\defeq k[G]\otimes_kR$, where $k[G]$ denotes the coordinate ring of $G$. We denote by
\begin{gather*}
R_G^{\pi}\qquad \big(\text{resp.}~R_G^{\sigma}\big)
\end{gather*}
the $R$-algebra $R\to R_G$ induced by the morphism $\pi\colon G\times X\to X$ (resp.\ $\sigma\colon G\times X\to X$).
If~$\big(\cA,\uptheta^{\cA}\big)$ is a $G$-equivariant $X$-algebra, taking global sections induces an $R$-algebra $A\defeq\cA(X)$ and an isomorphism
\begin{gather*}
\uptheta^{A}\defeq\uptheta^{\cA}(X)\colon\ A\otimes_R R_G^{\pi}\simto A\otimes_RR_G^{\sigma}
\end{gather*}
of $R_G$-algebras such that $\uptheta^{A}$ gives a $G$-equivariant structure on the $R$-module $A$. This yields the following definition.

 \begin{dfn}
 We call a pair
 $(\Lambda,\uptheta^{\Lambda})$
 a {\it $G$-equivariant $R$-algebra}, if $\Lambda$ is an $R$-algebra and $\uptheta^{\Lambda}\colon \Lambda\otimes_RR_G^{\pi}\simto\Lambda\otimes_RR_G^{\sigma}$ is an isomorphism of $R_G$-algebras such that it defines a $G$-equivariant structure on the $R$-module $\Lambda$.
 \end{dfn}

 \begin{rem}
For any commutative ring $S$, via the natural equivalence $\Mod S\simto \Qcoh \Spec S$, to give a $G$-equivariant algebra over $\Spec S$ is equivalent to give a $G$-equivariant $S$-algebra.
 \end{rem}

\begin{exa}\label{end ex}
Notation is the same as above.
\begin{enumerate}\itemsep=0pt
\item Let $\cF\in \coh_GX$ be a $G$-equivariant coherent sheaf on $X$. Then the algebra
\begin{gather*}
\cA\defeq\cEnd_X(\cF)
\end{gather*}
over $X$ has a natural $G$-equivariant structure. Indeed, the $G$-equivariant structure
\begin{gather*}
\uptheta\colon\ \pi^*\cEnd_X(\cF)\simto \sigma^*\cEnd_X(\cF)
\end{gather*}
on the $G$-equivariant coherent sheaf $\cEnd_X(\cF)\in\coh_GX$
is a morphism of algebras, and so the pair $(\cA, \uptheta$) is a $G$-equivariant algebra over $X$.

\item Let $\big(\cA,\uptheta^{\cA}\big)$ be a $G$-equivariant $X$-algebra. If $Y$ is another $G$-scheme and $f\colon X\to Y$ is a~$G$-equ\-i\-variant morphism that is quasi-separated and quasi-compact,
then the direct image
$\big(f_*\cA,(\id_G\times f)_*\uptheta^{\cA}\big)$
is a $G$-equivariant $Y$-algebra. If $\cA$ is coherent and $f$ is proper, the algebra $f_*\cA$ is also coherent. Similarly, if $g\colon Y\to X$ is a $G$-equivariant morphism, then the pull-back $\big(g^*\cA,(\id_G\times g)^*\uptheta^{\cA}\big)$ is a $G$-equivariant $Y$-algebra, and it is coherent if~$\cA$ is coherent.
 \end{enumerate}
\end{exa}

We define equivariant modules over equivariant algebras.

\begin{dfn}\label{equiv sheaf}
Let $\big(\cA,\uptheta^{\cA}\big)$ be a $G$-equivariant $X$-algebra.
\begin{itemize}\itemsep=0pt
\item[1.] A {\it $G$-equivariant structure} on a quasi-coherent right $\cA$-module $\cM$ is an isomorphism
\begin{gather*}
\uptheta^{\cM}\colon\ \pi^*\cM\simto \sigma^*\cM
\end{gather*}
of $\pi^*\cA$-modules such that $\uptheta^{\cM}$ satisfies the condition \eqref{compati}, where the $\pi^*\cA$-module structure on the $\sigma^*\cA$-module $\sigma^*\cM$ is given by $\uptheta^{\cA}$.
We call the pair $\big(\cM,\uptheta^{\cM}\big)$ a {\it $G$-equivariant quasi-coherent right $\cA$-module}, or simply {\it $G$-equivariant $\cA$-module}. If the underlying sheaf~$\cM$ of~modu\-les is coherent, we call $\big(\cM,\uptheta^{\cM}\big)$ a {\it $G$-equivariant coherent $\cA$-module}. We~sometimes write just $\cM$ for $\big(\cM,\uptheta^{\cM}\big)$.

\item[2.] Let $\big(\cM,\uptheta^{\cM}\big)$ and $\big(\cN,\uptheta^{\cN}\big)$ be $G$-equivariant $\cA$-modules. A morphism $\varphi\colon \cM\to \cN$ of $\cA$-modules is {\it $G$-equivariant} if the following diagram
\[
\begin{tikzcd}
\pi^*\cM\arrow[rr,"\pi^*\varphi"]\arrow[d,"\uptheta^{\cM}"']&&\pi^*\cN\arrow[d," \uptheta^{\cN}"]\\
\sigma^*\cM\arrow[rr, "\sigma^*\varphi"]&&\sigma^*\cN
\end{tikzcd}
\]
is commutative.

\item[3.] We denote by
\begin{gather*}\Qcoh_G\cA\end{gather*} the category of $G$-equivariant $\cA$-modules whose morphisms are $G$-equivariant, and write $\coh_G\cA$ for the full subcategory of $G$-equivariant coherent $\cA$-modules.
\end{itemize}
\end{dfn}

\begin{rem}
(1) If $X$ is a $G$-scheme, then $\cO_X$ is a natural $G$-equivariant algebra over $X$. For any $(\cF,\uptheta)\in \Qcoh_GX$, the $G$-equivariant structure $\uptheta$ is automatically $\pi^*\cO_X$-linear. Therefore, we have a natural identification
\begin{gather*}
\Qcoh_G\cO_X=\Qcoh_GX.
\end{gather*}
Hence $G$-equivariant modules are generalizations of $G$-equivariant quasi-coherent sheaves.
\end{rem}

\begin{dfn}
Let $X=\Spec R$ be an affine scheme with an action from an affine algebraic group $G$, and $(\Lambda,\uptheta^{\Lambda})$ a $G$-equivariant $R$-algebra. A {\it $G$-equivariant structure} on a right $\Lambda$-module~$M$ is an isomorphism
\begin{gather*}
\uptheta^M\colon\ M\otimes_RR_G^{\pi}\simto M\otimes_RR_G^{\sigma}
\end{gather*}
of $\big(\Lambda\otimes_RR_G^{\pi}\big)$-modules, where $\big(\Lambda\otimes_RR_G^{\sigma}\big)$-module $M\otimes_RR_G^{\sigma}$ is considered as a $\big(\Lambda\otimes_RR_G^{\pi}\big)$-mo\-dule via the ring isomorphism $\uptheta^{\Lambda}\colon \Lambda\otimes_RR_G^{\pi}\simto\Lambda\otimes_RR_G^{\sigma}$. We call such a pair $\big(M,\uptheta^M\big)$ a~{\it $G$-equivariant $\Lambda$-module}. We denote by
\begin{gather*}
\Mod_G\Lambda
\end{gather*}
the category of $G$-equivariant $\Lambda$-modules whose morphisms are defined similarly to Definition~\ref{equiv sheaf}, and we denote by
\begin{gather*}
\fmod_G\Lambda\subset \Mod_G\Lambda
\end{gather*}
the full subcategory consisting of equivariant modules that are finitely generated over $\Lambda$.
\end{dfn}

\begin{rem}
Let $X=\Spec R$ be an affine scheme with an action from an affine algebraic group $G$, and $\big(\cA,\uptheta^{\cA}\big)$ a $G$-equivariant $X$-algebra. Then it induces a $G$-equivariant $R$-algebra $A\defeq\cA(X)$, and we have a natural equivalence
\begin{gather*}
\Qcoh_G\cA\simto\Mod_GA
\end{gather*}
of abelian categories, which restricts to an equivalence $\coh_G\cA\simto\fmod_GA$.
\end{rem}
\vspace{1mm}

Let $\big(\cM,\uptheta^{\cM}\big)$ be a $G$-equivariant $\cA$-module. Recall from Remark~\ref{closed point isom} that for each closed point $g\in G$, we have the induced isomorphism
\begin{gather*}
\uptheta_g^{\cM}\colon\ \cM\simto \sigma_g^*\cM.
\end{gather*}
If $\big(\cN,\uptheta^{\cN}\big)$ is another $G$-equivariant $\cA$-module, we have the $G$-action on $\Hom_{\Qcoh\cA}(\cM,\cN)$ defined by
\begin{gather*}
g\cdot\varphi\defeq\big(\uptheta^{\cN}_g\big)^{-1}\circ\sigma_g^*\varphi\circ\uptheta^{\cM}_g.
\end{gather*}
The following is a generalization of the equality in \eqref{G hom}.
\begin{prop}\label{G hom 2}
Assume that $X$ is of finite type over $k$. We have
\begin{gather*}
\Hom_{\Qcoh_G\cA}\big(\big(\cM,\uptheta^{\cM}\big),\big(\cN,\uptheta^{\cN}\big)\big) =\Hom_{\Qcoh\cA}(\cM,\cN)^G.
\end{gather*}
Moreover, if $G$ is reductive, we have
\begin{gather*}
\Ext_{\Qcoh_G\cA}^i\big(\big(\cM,\uptheta^{\cM}\big),\big(\cN,\uptheta^{\cN}\big)\big) =\Ext_{\Qcoh\cA}^i(\cM,\cN)^G.
\end{gather*}
for any $i\geq0$.
\begin{proof}
The first equality follows from an identical argument as in Remark~\ref{cl pt}. Since $G$ is reductive, the functor
of taking $G$-invariant parts is exact, and thus the second equality follows from the first one.
\end{proof}
\end{prop}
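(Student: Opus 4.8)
The plan is to reduce everything to the already-proven equivariant identity for sheaves, namely \eqref{G hom}, and to the exactness of the $G$-invariants functor in the reductive case. First I would establish the first equality. A $G$-equivariant $\cA$-module is in particular a $G$-equivariant $\cO_X$-module, and a morphism $\varphi\colon\cM\to\cN$ of $\cA$-modules is $G$-equivariant (as in Definition~\ref{equiv sheaf}(2)) precisely when the diagram \eqref{diag} commutes for the underlying $\cO_X$-modules. Since $X$ is of finite type over $k$, Remark~\ref{cl pt} tells us that commutativity of \eqref{diag} is equivalent to commutativity of its pull-back along every closed point $g\times\id_X$, i.e.\ to $\varphi = (\uptheta^{\cN}_g)^{-1}\circ\sigma_g^*\varphi\circ\uptheta^{\cM}_g = g\cdot\varphi$ for all closed $g\in G$. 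The set of such $\varphi$ is by definition $\Hom_{\Qcoh\cA}(\cM,\cN)^G$, which gives the first equality. This is essentially the verbatim argument of Remark~\ref{cl pt}, the only difference being that $\varphi$ and the pull-backs $\sigma_g^*\varphi$ are required to be $\cA$-linear (resp.\ $\sigma_g^*\cA$-linear) rather than merely $\cO_X$-linear, which is automatic since we start from an $\cA$-linear map.

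For the second equality I would argue as follows. Since $X$ is of finite type, $\Qcoh\cA$ is a Grothendieck category (by the Remark preceding the statement, or Proposition~\ref{grothen prop}), hence has enough injectives, and so does $\Qcoh_G\cA$ via the equivalence $\Qcoh_G\cA\cong\Qcoh[\cA/G]$. Choose an injective resolution $\cN\to\cI^{\bullet}$ in $\Qcoh_G\cA$ whose underlying complex of $\cA$-modules is still an injective resolution of $\cN$ in $\Qcoh\cA$ (one can take, for instance, the equivariant injectives obtained by the standard induction/restriction construction, whose underlying objects are injective in $\Qcoh\cA$; alternatively one resolves in $\Qcoh\cA$ and equips the terms with equivariant structures). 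Then for each term we have $\Hom_{\Qcoh_G\cA}(\cM,\cI^j) = \Hom_{\Qcoh\cA}(\cM,\cI^j)^G$ by the first equality. Because $G$ is reductive and we are in characteristic zero, the functor $(-)^G$ of taking $G$-invariants is exact on the category of $k$-linear $G$-representations; applying it commutes with taking cohomology of the complex $\Hom_{\Qcoh\cA}(\cM,\cI^{\bullet})$. Therefore
\[
\Ext^i_{\Qcoh_G\cA}(\cM,\cN) = H^i\big(\Hom_{\Qcoh\cA}(\cM,\cI^{\bullet})^G\big) = H^i\big(\Hom_{\Qcoh\cA}(\cM,\cI^{\bullet})\big)^G = \Ext^i_{\Qcoh\cA}(\cM,\cN)^G,
\]
as desired.

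The only genuinely delicate point is the compatibility of injective resolutions: one must know that the $\Ext$ on the right can be computed from a resolution that simultaneously serves both categories, equivalently that the forgetful functor $\Qcoh_G\cA\to\Qcoh\cA$ sends enough injectives to objects acyclic for $\Hom_{\Qcoh\cA}(\cM,-)$. I expect this to follow from the induction--restriction adjunction along $[X/G]\to X$ (the restriction of an equivariant injective is injective, or at least $\Hom(\cM,-)$-acyclic), exactly as in the non-equivariant sheaf case; once this is in hand the rest is the exactness of $(-)^G$ and a spectral-sequence-free diagram chase. If one prefers to avoid the resolution-compatibility issue altogether, an alternative is to use the Grothendieck spectral sequence for the composite $(-)^G\circ\Hom_{\Qcoh\cA}(\cM,-)$ and observe that, $(-)^G$ being exact, it degenerates, yielding the same conclusion.
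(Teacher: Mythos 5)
Your argument coincides with the paper's terse proof: the first equality by the closed-point argument of Remark~\ref{cl pt}, the second because $(-)^G$ is exact in characteristic zero for $G$ reductive. You are right to flag as the one genuine subtlety the comparison between $\Ext$ computed from injective resolutions in $\Qcoh_G\cA$ and in $\Qcoh\cA$, which the paper leaves implicit. Your primary route---resolving $\cN$ by objects of the form $\Ind J$ with $J$ injective in $\Qcoh\cA$, so that $\Res\Ind J$ is a coproduct of copies of $J$ and therefore still injective---is sound, but the injectivity of the coproduct uses that $\Qcoh\cA$ is locally Noetherian ($X$ of finite type, hence Noetherian, and $\cA$ coherent); you should say so explicitly.

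Two of your side remarks should be discarded. First, one cannot in general ``resolve in $\Qcoh\cA$ and equip the terms with equivariant structures'': a given injective $\cA$-module need not admit any $G$-equivariant structure, and even when the terms do, the differentials and the augmentation need not be equivariant. Second, the Grothendieck spectral sequence for $(-)^G\circ\Hom_{\Qcoh\cA}(\cM,-)$ does \emph{not} avoid the resolution-compatibility issue. Exactness of $(-)^G$ degenerates it to an isomorphism
\begin{gather*}
\Ext^i_{\Qcoh_G\cA}(\cM,\cN)\cong\big(R^i\Hom_{\Qcoh\cA}(\cM,-)(\cN)\big)^G,
\end{gather*}
but the right derived functor on the right-hand side is computed from injective resolutions in $\Qcoh_G\cA$; identifying it with $\Ext^i_{\Qcoh\cA}(\cM,\cN)$ is precisely the compatibility statement you were trying to bypass. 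Keep the primary argument and drop the alternatives.
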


\begin{exa} 
Notation is the same as in Example~\ref{end ex}(1).

\begin{itemize}\itemsep=0pt
\item[$1.$] For any $\cG\in \Qcoh_GX$, the quasi-coherent $\cA$-module \begin{gather*}\cHom_X(\cF,\cG)\end{gather*} has a natural $G$-equivariant structure induced by the $G$-equivariant structure on
 \begin{gather*}
 \cHom_X(\cF,\cG)\in \Qcoh_GX.
 \end{gather*}

\item[$2.$] For $\cM\in \Mod_G\cA$ and $\cF\in \Qcoh_GX$, the tensor product \begin{gather*}\cF\otimes_X \cM\end{gather*} of $\cO_X$-modules is an $\cA$-module with a natural $G$-equivariant structure induced by the $G$-equivariant structure on $\cF\otimes_X \cM\in \Qcoh_GX$.
\end{itemize}
\end{exa}

If $\big(\cA,\uptheta^{\cA}\big)$ is a $G$-equivariant $X$-algebra, the ring homomorphism
$\uprho\colon \cO_X\to \cA$ is $G$-equivariant by Remark~\ref{linearity}. We define the sheaf of rings
 \begin{gather*}
 [\cA/G]
 \end{gather*}
 on the quotient stack $[X/G]$ to be the image of $\cA\in \Qcoh_GX$ by the equivalence
\begin{gather*}
[-/G]\colon\ \Qcoh_GX\simto \Qcoh[X/G]
\end{gather*}
 in \eqref{equiv stack}. Since $[\cO_X/G]$ is canonically isomorphic to the structure sheaf $\cO_{[X/G]}$, we have the following morphism
\begin{gather*}
[\uprho/G]\colon\ \cO_{[X/G]}\to [\cA/G]
\end{gather*}
of sheaf of rings and this makes the sheaf $[\cA/G]$ an algebra over $[X/G]$.
The following is a~generalization of \eqref{equiv stack}, and it follows from Proposition~\ref{main app}.
\begin{prop}\label{quotient correspond}
We have an equivalence
\begin{gather*}
\Qcoh_G\cA\cong \Qcoh [\cA/G]
\end{gather*}
of abelian categories.
\end{prop}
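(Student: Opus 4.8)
The plan is to reduce the statement to the already-established equivalence \eqref{equiv stack} between $\Qcoh_G X$ and $\Qcoh[X/G]$ together with the observation that $[\cA/G]$ is, by its very definition, the image of $\cA$ under the functor $[-/G]$. The key point is that the equivalence $[-/G]\colon\Qcoh_G X\simto\Qcoh[X/G]$ of \eqref{equiv stack} is monoidal with respect to the tensor products over $\cO_X$ and $\cO_{[X/G]}$, and hence carries the algebra object $\big(\cA,\uptheta^{\cA}\big)$ in $\Qcoh_G X$ to the algebra object $[\cA/G]$ in $\Qcoh[X/G]$, and likewise carries a $G$-equivariant $\cA$-module $\big(\cM,\uptheta^{\cM}\big)$ to a module over $[\cA/G]$. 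First I would recall (citing Proposition~\ref{main app} in the appendix, to which the statement defers) that $\Qcoh[X/G]$ can be described via the simplicial scheme $X_\bullet$ associated to the groupoid $G\times X\rightrightarrows X$, so that a quasi-coherent sheaf on $[X/G]$ is precisely a quasi-coherent sheaf $\cF$ on $X$ together with a descent datum $\uptheta\colon\pi^*\cF\simto\sigma^*\cF$ satisfying the cocycle condition \eqref{compati}; under this identification the structure sheaf $\cO_{[X/G]}$ corresponds to $(\cO_X,\uptheta^{\mathrm{can}})$, and the tensor product on $\Qcoh[X/G]$ is computed by the obvious tensor product of sheaves with descent data.

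Granting this dictionary, the bulk of the argument is bookkeeping. I would define, for $\big(\cM,\uptheta^{\cM}\big)\in\Qcoh_G\cA$, the object $[\cM/G]\in\Qcoh[X/G]$ via \eqref{equiv stack} and check that the right $\cA$-action $\cM\otimes_{\cO_X}\cA\to\cM$, being a morphism of $G$-equivariant $\cO_X$-modules by the compatibility in Definition~\ref{equiv sheaf}(1), is sent by the monoidal equivalence to a morphism $[\cM/G]\otimes_{\cO_{[X/G]}}[\cA/G]\to[\cM/G]$ satisfying the module axioms; associativity and unitality transport because $[-/G]$ is a (strong) monoidal functor. Conversely, given a $[\cA/G]$-module on $[X/G]$, applying the quasi-inverse of \eqref{equiv stack} produces a quasi-coherent $\cO_X$-module with $G$-equivariant structure, and the transported action map endows it with the structure of a $G$-equivariant $\cA$-module; one checks a $G$-equivariant $\cA$-linear morphism goes to a $[\cA/G]$-linear morphism and vice versa, using that \eqref{equiv stack} is fully faithful. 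These two assignments are mutually quasi-inverse since they are built from the quasi-inverse equivalences of \eqref{equiv stack}, and they are exact since \eqref{equiv stack} is an equivalence of abelian categories and the kernels/cokernels of $\cA$-linear (resp.\ $[\cA/G]$-linear) maps are computed on underlying $\cO_X$-modules (resp.\ $\cO_{[X/G]}$-modules).

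The main obstacle I anticipate is purely formal rather than conceptual: making precise that \eqref{equiv stack} is \emph{monoidal} in a way compatible with pullbacks, so that the $\pi^*\cA$-module structure on $\pi^*\cM$ and the $\sigma^*\cA$-module structure on $\sigma^*\cM$ (the latter twisted by $\uptheta^{\cA}$, as in Definition~\ref{equiv sheaf}(1)) match up correctly under descent. Concretely, one must verify that the descent datum $\uptheta^{\cM}$ being $\pi^*\cA$-linear is exactly the condition needed for the transported action map on $[X/G]$ to be well-defined as a map of $[\cA/G]$-modules; this is a diagram chase over the simplicial scheme $X_\bullet$ and is where the hypothesis that $\uptheta^{\cA}$ is a morphism of sheaves of algebras (not merely of $\cO_{G\times X}$-modules) is used. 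Once this compatibility is nailed down, everything else follows by transport of structure along the equivalence of Proposition~\ref{main app}; accordingly I would keep the write-up short and point to the appendix for the technical heart.
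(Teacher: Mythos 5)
Your proposal is correct in substance but takes a genuinely different route from the paper. The paper's proof is essentially a citation to Proposition~\ref{main app}, whose appendix proof works directly at the level of sheaves on the fppf site of the prestack $[U/_pR]$ for a groupoid $(U,R,s,t,c)$ in algebraic spaces: it constructs the functor $[(-)/_pR]$ explicitly, builds a quasi-inverse by restricting along the atlas $\alpha\colon(\Sch/U)\to[U/_pR]$ and using comparison maps $c_{(f,\varphi)}$, and then descends through the stackification equivalence of Lemma~\ref{stackification equiv}. Your approach instead treats \eqref{equiv stack} as a \emph{monoidal} equivalence and invokes transport of structure: a monoidal equivalence carries algebra objects to algebra objects and induces an equivalence between their module categories, and the $\pi^*\cA$-linearity of $\uptheta^\cM$ in Definition~\ref{equiv sheaf} is exactly the condition that the action map $\cM\otimes_{\cO_X}\cA\to\cM$ is $G$-equivariant, which then transports to an $[\cA/G]$-module structure on $[\cM/G]$. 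This is a cleaner conceptual argument, but it relies on a fact the paper never states -- that \eqref{equiv stack} is a (strong) monoidal equivalence -- whereas the paper's appendix is self-contained and, moreover, proves the stronger groupoid-level statement, which is reused elsewhere (e.g., in the proof of the equivalence $\Phi$ for $G\timesH X$).

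One small inaccuracy to flag: you write that you would cite Proposition~\ref{main app} for the descent description of $\Qcoh[X/G]$, but that description is what underlies \eqref{equiv stack}, not what \ref{main app} asserts; \ref{main app} is the generalized statement itself. If you go the monoidal route you should cite a standard descent/monoidality reference (or prove monoidality) rather than \ref{main app}, since otherwise the argument looks circular. You have also correctly identified the one place where real care is needed -- verifying that $\pi^*\cA$-linearity of the cocycle $\uptheta^\cM$ matches $G$-equivariance of the action map under the monoidal identification, which is where the hypothesis that $\uptheta^\cA$ is an algebra isomorphism enters.
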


\begin{rem}
If an affine algebraic group $G$ is abelian, by a similar argument as in~\cite[Section~2.1]{bfk2} $G$-equivariant algebras correspond to $\widehat{G}$-graded algebras, where $\widehat{G}$ is the character group of $G$. Since we do not need this correspondence in the present paper, we do not give a~formulation of the correspondence and its proof.
\end{rem}

\subsection{Functors of equivariant modules}
In this subsection, we define fundamental functors between equivariant modules. Let $X$ be a~qua\-si-compact and quasi-separated scheme, and $G$ an algebraic group acting on $X$ by $\sigma\colon G\times X\to X$. Denote by $\pi\colon G\times X\to X$ the natural projection.

\subsubsection{Restrictions and extensions}

Let $\cA$ and $\cB$ be $G$-equivariant $X$-algebras, and let $\varphi\colon \cA\to \cB$ be a $G$-equivariant morphism of~$X$-algebras. The functors in \eqref{restriction} and \eqref{extension} define the {\it restriction} by $\varphi$
\begin{gather*}
(-)_{\varphi}\colon\ \Qcoh_G\cB\to \Qcoh_G\cA
\end{gather*}
and the {\it extension} by $\varphi$
\begin{gather*}
(-)\otimes_{\cA}\cB\colon\ \Qcoh_G\cA\to \Qcoh_G\cB,
\end{gather*}
and we have an adjunction
\begin{equation}\label{rest ext adj}
(-)\otimes_{\cA}\cB\dashv (-)_{\varphi}.
\end{equation}

\subsubsection{Direct image functors and pull-back functors}
Let $Y$ be another quasi-compact and quasi-separated $G$-scheme, and $f\colon X\to Y$ a $G$-equivariant morphism.
Let $\cA$ be a $G$-equivariant $X$-algebra, and $\cB$ a $G$-equivariant $Y$-algebra. Recall from Example~\ref{end ex}(2) that the push-forward $f_*\cA$ is a $G$-equivariant $Y$-algebra and that the pull-back $f^*\cB$ is a $G$-equivariant $X$-algebra. The push-forwards and the pull-backs of $G$-equivariant modules define the following additive functors
\begin{gather*}
f_{\star}\colon\ \Qcoh_G\cA\to \Qcoh_Gf_*\cA,
\\[.5ex]
f^*\colon\ \Qcoh_G\cB \to \Qcoh_Gf^*\cB.
\end{gather*}
Using natural morphisms of equivariant algebras
$\varphi\colon f^{-1}f_*\cA\to \cA$ and $\psi\colon \cB\to f_*f^*\cB$,
we define additive functors
\begin{gather*}
f^{\star}\colon\ \Qcoh_Gf_*\cA\to\Qcoh_G\cA,
\\[.5ex]
f_{*}\colon\ \Qcoh_Gf^*\cB\to \Qcoh_G\cB
\end{gather*}
by $f^{\star}(-)\defeq f^{-1}(-)\otimes_{f^{-1}f_*\cA}\cA$ and $f_{*}\defeq(-)_{\psi}\circ f_{\star}$.
By standard arguments, we see that the above functors induce adjunctions;
\begin{gather*}
f^*\dashv f_{*}\qquad\text{and}\qquad f^{\star}\dashv f_{\star}.
\end{gather*}

 Let $G'$ be another algebraic group acting on another scheme $Z$, and let $\alpha\colon G\to G'$ be a morphism of algebraic groups. Let $h\colon X\to Z$ be a morphism of schemes, and $\cC$ an $G'$-equivariant $Z$-algebra. We say that $h$ is {\it $\alpha$-equivariant} if we have $h(g x)=\alpha(g)h(x)$ for all $(g,x)\in G\times X$. If $h$ is $\alpha$-equivariant, then $h$ induces the pull-back functor
\begin{gather*}
h^*_{\alpha}\colon\ \Qcoh_{G'}Z\to \Qcoh_GX
\end{gather*}
defined by $h^*(\cF,\uptheta)\defeq(h^*\cF, (\alpha\times h)^*\uptheta)$, and we have
 the associated $G$-equivariant $X$-alge\-bra~$h^*\cC$. This functor extends to
the functor
\begin{equation*}
h^*_{\alpha}\colon\ \Qcoh_{G'}\cC\to \Qcoh_Gh^*\cC.
\end{equation*}

\subsubsection{Tensor products and sheaf Homs}

Let $\uprho\colon\cO_{X}\to\cA$ and $\uprho'\colon\cO_{X}\to\cA'$ be $G$-equivariant $X$-algebras. A {\it $G$-equivariant quasi-coherent $($resp.\ coherent$)$ $(\cA',\cA)$-bimodule} is a pair $\big(\cM,\uptheta^{\cM}\big)$ of a $(\cA',\cA)$-bimodule $\cM$ such that the left action of $\cO_{X}$ via $\uprho'$ coincides with the right action of $\cO_{X}$ via $\uprho$ and that $\cM$ is a quasi-coherent (resp.\ coherent) sheaf on $X$ and an isomorphism $\uptheta\colon \pi^*\cM\simto\sigma^*\cM$ of $(\pi^*\cA',\pi^*\cA)$-bimodules such that $\uptheta^{\cM}$ satisfies the condition \eqref{compati}.
 We denote by
$\Qcoh_G(\cA',\cA)$ the category of $G$-equivariant quasi-coherent $(\cA',\cA)$-bimodules, and $\coh_G(\cA',\cA)$ the full subcategory of $G$-equivariant coherent $(\cA',\cA)$-bimodules.

Let $\cF\in \Qcoh_G(\cA',\cA)$, $\cM\in\Qcoh_G\cA'$ and $\cN\in \Qcoh_G\cA$. Then we have the tensor product
\begin{gather*}
\cM\otimes_{\cA'}\cF\in \Qcoh_G\cA.
\end{gather*}
If $\cF\in \coh_G(\cA',\cA)$, we also have the sheaf Hom
\begin{gather*}
\cHom_{\cA}(\cF,\cN)\in \Qcoh_G\cA',
\end{gather*}
and there is a functorial isomorphism
\begin{equation*}
\Hom_{\Qcoh_G\cA}\big(\cM\otimes_{\cA'}\cF,\cN\big)\cong \Hom_{\Qcoh_G\cA'}\big(\cM,\cHom_{\cA}(\cF,\cN)\big).
\end{equation*}
In other words, if $\cF\in \coh_G(\cA',\cA)$, the functor
\begin{gather*}
(-)\otimes_{\cA'}\cF\colon\ \Qcoh_G\cA'\to\Qcoh_G\cA
\end{gather*}
is left adjoint to the functor
\begin{gather*}
\cHom_{\cA}(\cF,-)\colon\ \Qcoh_G\cA\to\Qcoh_G \cA'.
\end{gather*}

The following is a version of projection formula for equivariant $\cA$-modules.

\begin{lem}[Projection formula]\label{proj form}
Let $Y$ be a quasi-compact and quasi-separated scheme with a~$G$-action, and $g\colon Y\to X$ a $G$-equivariant morphism. If $g$ is flat and affine, for any $\cE\in \Qcoh_G Y$ and $\cF\in \Qcoh_G\cA$, we have an isomorphism of $G$-equivariant $\cA$-modules
\begin{gather*}
g_*(\cE)\otimes_{\cO_X} \cF\simto g_*\!\big(\cE\otimes_{\cO_Y}g^*\cF \big).
\end{gather*}
\begin{proof}
Since $\bR g_*\cong g_*$ and $\bL \,g^*\cong g^*$, by the projection formula~\cite[Proposition~3.9.4]{lip}, we~have a quasi-isomorphism
\begin{gather*}
g_*\cE\otimes^{\bL}_{\cO_X}\cF\cong g_*\big(\cE\otimes^{\bL}_{\cO_Y}g^*\cF\big)
\end{gather*}
of complexes of quasi-coherent $\cO_X$-modules. Hence we have isomorphisms
\begin{align*}
g_*(\cE)\otimes_{\cO_X} \cF&\cong H^0\big(g_*(\cE)\otimes^{\bL}_{\cO_X}\cF\big)\\
&\cong H^0\big(g_*\big(\cE\otimes^{\bL}_{\cO_Y}g^*\cF\big)\big)\\
&\cong g_*\big(H^0\big(\cE\otimes^{\bL}_{\cO_Y}g^*\cF\big)\big)\\
&\cong g_*\big(\cE\otimes_{\cO_Y}g^*\cF\big),
\end{align*}
where the third isomorphism follows since $g_*$ is an exact functor. This isomorphism
\begin{gather*}
\varphi\colon\ g_*(\cE)\otimes_{\cO_X} \cF\simto g_*\!\big(\cE\otimes_{\cO_Y}g^*\cF \big)
\end{gather*}
of $\cO_X$-modules is nothing but the composition
\begin{gather*}
 g_*(\cE)\otimes_{\cO_X} \cF\longrightarrow g_*(\cE)\otimes_{\cO_X} g_*g^*\cF\longrightarrow g_*\!\big(\cE\otimes_{\cO_Y}g^*\cF \big),
\end{gather*}
where the first morphism is the morphism induced by the adjunction $\cF\to g_*g^*\cF$ and the second morphism is the canonical morphism defined by $x\otimes y\mapsto x\otimes y$. Since these morphisms are $\cA$-linear and $G$-equivariant, so is the composition $\varphi$. This finishes the proof.
\end{proof}
\end{lem}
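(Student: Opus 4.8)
The plan is to first construct the isomorphism at the level of the underlying $\cO_X$-modules using the classical projection formula, then to identify it with a concrete composite of canonical morphisms, and finally to observe that each of these canonical morphisms is both $\cA$-linear and $G$-equivariant.

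First I would forget all the extra structure and work in $\Qcoh X$. Since $g$ is flat and affine, one has $\bR g_*\cong g_*$ and $\bL g^*\cong g^*$, so the derived projection formula \cite[Proposition~3.9.4]{lip} yields a quasi-isomorphism $g_*\cE\otimes^{\bL}_{\cO_X}\cF\cong g_*\big(\cE\otimes^{\bL}_{\cO_Y}g^*\cF\big)$ in the derived category of $\cO_X$-modules. Applying $H^0$, using that $g_*$ is exact (affineness) and that $g$ is flat so that $\cE\otimes^{\bL}_{\cO_Y}g^*\cF$ has no negative cohomology, I recover an isomorphism $g_*(\cE)\otimes_{\cO_X}\cF\simto g_*\big(\cE\otimes_{\cO_Y}g^*\cF\big)$ of quasi-coherent $\cO_X$-modules. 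The next step is to recognize that this isomorphism is, up to the identifications above, the canonical morphism $\varphi$ obtained as the composite
\[
g_*(\cE)\otimes_{\cO_X}\cF\longrightarrow g_*(\cE)\otimes_{\cO_X}g_*g^*\cF\longrightarrow g_*\big(\cE\otimes_{\cO_Y}g^*\cF\big),
\]
where the first arrow is induced by the unit $\cF\to g_*g^*\cF$ of the adjunction $g^*\dashv g_*$ and the second is the lax-monoidal comparison map $g_*(-)\otimes_{\cO_X}g_*(-)\to g_*\big((-)\otimes_{\cO_Y}(-)\big)$.

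Having this description, both remaining properties follow by inspection. For $\cA$-linearity: the right $\cA$-module structures on all sheaves in sight are induced by the $\cA$-module structure on $\cF$ (and $g^*\cF$), and the unit $\cF\to g_*g^*\cF$ is a map of $\cA$-modules (its adjoint $g^*\cF\to g^*\cF$ is the identity, hence $g^*\cA$-linear), while the comparison map is $\cO_X$-bilinear and compatible with the $\cA$-action on the second factor; so the composite $\varphi$ is $\cA$-linear. For $G$-equivariance: since $g$ is $G$-equivariant, the pull-back and push-forward functors $g^*$ and $g_*$ on equivariant quasi-coherent sheaves are adjoint, and the unit $\cF\to g_*g^*\cF$ is a morphism in $\Qcoh_G X$ by construction; the comparison map is likewise a morphism of $G$-equivariant sheaves, which can be checked directly from the compatibility of the equivariant structures $\uptheta^{g_*\cE}$, $\uptheta^{g^*\cF}$ with $\pi^*$ and $\sigma^*$, or—when $X$ is of finite type—by pulling back to closed points $g\in G$ as in Remark~\ref{cl pt}. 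Therefore $\varphi$ is an isomorphism of $G$-equivariant $\cA$-modules, as claimed.

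The only genuinely delicate point is the verification that the lax-monoidal comparison map for $g_*$ respects the $G$-equivariant structures; everything else is formal adjunction bookkeeping. I would handle this either by the closed-point reduction of Remark~\ref{cl pt}, or by noting that the comparison map is natural and compatible with flat base change along $\pi,\sigma\colon G\times X\to X$, so that the cocycle condition \eqref{compati} for $\cE$ and $\cF$ forces it for $\varphi$.
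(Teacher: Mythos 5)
Your proposal is correct and follows essentially the same route as the paper: reduce to the classical derived projection formula (using flatness and affineness to replace $\bR g_*$, $\bL g^*$ by $g_*$, $g^*$), identify the resulting isomorphism with the composite of the adjunction unit and the lax-monoidal comparison map for $g_*$, and then check $\cA$-linearity and $G$-equivariance of these two canonical morphisms. The extra remarks you add — on why $H^0$ suffices, the adjoint description of the unit map, and the alternative closed-point verification — are correct elaborations but do not change the argument.
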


\subsubsection{Taking invariant sections.}
Let $H$ be a closed normal subgroup of $G$. Assume that the restriction $\sigma_H\defeq\sigma|_{H\times X}\colon H\times X\to X$ is the trivial action, so that $\sigma_H=\pi_H$, where $\pi_H\defeq\pi|_{H\times X}\colon H\times X\to X$ is the natural projection. Then we have the induced $G/H$-action on $X$ denoted by
\begin{gather*}
\overline{\sigma}\colon\ G/H\times X\to X.
\end{gather*}
We write $\overline{\pi}\colon G/H\times X\to X$ for the natural projection.

For $\big(\cF,\uptheta^{\cF}\big)\in\Qcoh_GX$, we define the subsheaf $\cF^H\subseteq\cF$ to be the kernel of the composition
\[
\begin{tikzcd}
\cF\arrow[r,"\mu"]&(\pi_H)_*(\pi_H)^* \cF\arrow[rr,"\id -(\pi_H)_*\uptheta_H^{\cF}"]&&(\pi_H)_*(\pi_H)^* \cF,
\end{tikzcd}
\]
 where $\mu$ is the adjunction morphism and $\uptheta^{\cF}_H\defeq\uptheta^{\cF}|_{H\times X}\colon (\pi_H)^*\cF\simto (\sigma_H)^*\cF=(\pi_H)^*\cF$.
Then the pair $\big(\cF^H,\uptheta^{\cF}|_{\pi^*\cF^H}\big)$ is a $G$-equivariant quasi-coherent sheaf. Since the restriction of the isomorphism $\uptheta^{\cF}|_{\pi^*\cF^H}\colon {\pi^*\cF^H}\simto {\sigma^*\cF^H}$ to $H\times X$ is the identity, there is a unique $G/H$-equivariant structure $\uptheta^{\cF^H}\colon \overline{\pi}^*\cF^H\simto \overline{\sigma}^*\cF^H$ on $\cF^H$ such that $\uptheta^{\cF}|_{\pi^*\cF^H}={(p\times \id _X)}^*\uptheta^{\cF^H}$, where $p\colon G\to G/H$ is the natural projection. This defines the functor
\begin{equation}\label{inv coh}
(-)^H\colon\ \Qcoh_GX\to \Qcoh_{G/H}X.
\end{equation}

If $\uprho\colon\cO_X\to\cA$ is a $G$-equivariant $X$-algebra, the induced morphism
\begin{gather*}
\cO_X=\cO_X^H\xrightarrow{\uprho^H}\cA^H
\end{gather*}
defines a $G/H$-equivariant $X$-algebra. If $\big(\cM,\uptheta^{\cM}\big)$ is a $G$-equivariant $\cA$-module, the morphism $\alpha\colon \cA\times \cM\to \cM$ defining the $\cA$-module structure on $\cM$ is $G$-equivariant since $\uptheta^{\cM}$ is $\pi^*\cA$-linear. The induced morphism
\begin{gather*}
\alpha^H\colon\ \cA^H\times \cM^H\to \cM^H
\end{gather*}
 defines a $\cA^H$-module structure on $\cM^H$, and so the pair $\big(\cM^H,\uptheta^{\cM^H}\big)$ is a $G/H$-equivariant $\cA^H$-module. Thus the functor \eqref{inv coh} extends to the functor
\begin{gather*}
(-)^H\colon\ \Qcoh_G\cA\to \Qcoh_{G/H}\cA^H.
\end{gather*}
The following is a generalization of~\cite[Lemma~2.22]{bfk}.

\begin{prop}
The composition
\[
\begin{tikzcd}
\Qcoh_{G/H}\cA^H\arrow[r,"\id_{p}^*"]&\Qcoh_{G}\cA^H\arrow[rr,"(-)\otimes_{\cA^H}\cA"]&&\Qcoh_{G}\cA
\end{tikzcd}
\]
is left adjoint to the functor $(-)^H$.
\begin{proof}
Let $\cM\in \Qcoh_{G/H}\cA^H$ and $\cN\in \Qcoh_G\cA$. The result follows from the following sequences of isomorphisms
\begin{align*}
\Hom_{\Qcoh_{G}\cA}\big(\id_p^*\cM\otimes_{\cA^H}\cA,\cN\big)
&\cong\Hom_{\Qcoh_{G}\cA^H}\big(\id_p^*\cM,\cN_{\iota}\big)
\\
&\cong \Hom_{\Qcoh_{G/H}\cA^H}\big(\cM,(\cN_{\iota})^H\big)
\\
&\cong \Hom_{\Qcoh_{G/H}\cA^H}\big(\cM,\cN^H\big),
\end{align*}
\looseness=1
where $\cN_{\iota}$ is the restriction of $\cN$ by the natural inclusion $\iota\colon \cA^H\hookto \cA$, the first isomorphism follows from \eqref{rest ext adj}, the second isomorphism follows from an identical argument as in the proof of~\cite[Lemma~2.22]{bfk}, and the last isomorphism follows from a natural isomorphism \mbox{$(\cN_{\iota})^H\cong \cN^H$}.
\end{proof}
\end{prop}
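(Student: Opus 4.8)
The plan is to build, for $\cM\in\Qcoh_{G/H}\cA^H$ and $\cN\in\Qcoh_G\cA$, the required natural isomorphism
\[
\Hom_{\Qcoh_G\cA}\big(\id_p^*\cM\otimes_{\cA^H}\cA,\cN\big)\cong\Hom_{\Qcoh_{G/H}\cA^H}\big(\cM,\cN^H\big)
\]
as a composite of three natural isomorphisms factoring through the intermediate category $\Qcoh_G\cA^H$, mirroring the proof of~\cite[Lemma~2.22]{bfk}.

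First I would peel off the extension of scalars along the $G$-equivariant algebra morphism $\iota\colon\cA^H\hookrightarrow\cA$: the adjunction \eqref{rest ext adj} applied to $\iota$ gives $\Hom_{\Qcoh_G\cA}(\id_p^*\cM\otimes_{\cA^H}\cA,\cN)\cong\Hom_{\Qcoh_G\cA^H}(\id_p^*\cM,\cN_{\iota})$, where $\cN_{\iota}$ is the restriction of $\cN$ along $\iota$. Second I would establish the $\cA^H$-linear version of the adjunction between pull-back along $p\colon G\to G/H$ and taking $H$-invariants, i.e.\ that $\id_p^*\colon\Qcoh_{G/H}\cA^H\to\Qcoh_G\cA^H$ is left adjoint to $(-)^H\colon\Qcoh_G\cA^H\to\Qcoh_{G/H}\cA^H$; on the underlying equivariant $\cO_X$-modules this is precisely~\cite[Lemma~2.22]{bfk}, and the only addition is that the unit and counit are $\cA^H$-linear, which holds because $\cF^H$ is the kernel of a morphism of $(\pi_H)^*\cA^H$-modules, so the inclusion $\cF^H\hookrightarrow\cF$ and its descent along $p$ carry the $\cA^H$-action. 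This yields $\Hom_{\Qcoh_G\cA^H}(\id_p^*\cM,\cN_{\iota})\cong\Hom_{\Qcoh_{G/H}\cA^H}(\cM,(\cN_{\iota})^H)$. Third I would identify $(\cN_{\iota})^H$ with $\cN^H$: the two are the same invariant subsheaf of the common underlying $G$-equivariant $\cO_X$-module $\cN$, and the $\cA^H$-action on $(\cN_{\iota})^H$ --- restrict the $\cA$-action along $\iota$, then pass to $H$-invariants --- is exactly the morphism $\alpha^H$ that defines $\cN^H$ as a $G/H$-equivariant $\cA^H$-module. Composing the three isomorphisms, together with the (routine) naturality checks, gives the proposition.

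The main obstacle is the second step: lifting the sheaf-level adjunction $\id_p^*\dashv(-)^H$ of~\cite[Lemma~2.22]{bfk} to $\cA^H$-modules. Concretely one must verify that each structure morphism entering the construction of $(-)^H$ --- the adjunction unit $\cF\to(\pi_H)_*(\pi_H)^*\cF$, the restricted equivariant structure $\uptheta^{\cF}_H$, and the descent of $\uptheta^{\cF}|_{\pi^*\cF^H}$ along $p$ --- is compatible with the $\cA^H$-module structures, so that the adjunction isomorphism on equivariant $\cO_X$-modules restricts to morphisms of equivariant $\cA^H$-modules. The first and third steps are then mere bookkeeping.
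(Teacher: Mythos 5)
Your proposal is correct and follows essentially the same route as the paper: factor through $\Qcoh_G\cA^H$ via the restriction--extension adjunction along $\iota$, then invoke the $\cA^H$-linear lift of the $\id_p^*\dashv(-)^H$ adjunction from~\cite[Lemma~2.22]{bfk}, and finally identify $(\cN_\iota)^H$ with $\cN^H$. The paper states the second isomorphism by reference to the cited lemma without the compatibility discussion you spell out, but your extra detail is exactly the check that makes that citation work.
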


\subsubsection{Restriction functors and induction functors}
 Let $H$ be a closed subgroup of $G$, and
 \begin{gather*}
 \alpha\colon\ H\hookto G
 \end{gather*}
 the natural inclusion morphism. Let $\cA$ be a $G$-equivariant $X$-algebra. We define the {\it restriction functor}
\begin{gather*}
\Res^G_H\defeq (\id_X)_{\alpha}^*\colon\ \Qcoh_G\cA\to \Qcoh_H\cA
\end{gather*}
to be the pull-back by the identity morphism $\id_X\colon X\to X$ that is $\alpha$-equivariant. If $H$ is trivial, we write $\Res\defeq\Res^G_{\{1\}}$, which is nothing but the forgetful functor $\big(\cM,\uptheta^{\cM}\big)\mapsto \cM$.

Next, we will construct the adjoint functor of this restriction functor. We define an $H$-action on $G\times X$ by
\begin{gather*}
h\cdot(g,x)\defeq\big(g\big(h^{-1}\big), h x\big)
\end{gather*}
for any $h\in H$, $g\in G$ and $x\in X$, and we write
\begin{gather*}
G\timesH X\defeq[G\times X/H]
\end{gather*}
for the associated quotient stack. By~\cite[Lemma~2.16(a)]{bfk}, the quotient stack $G\timesH X$ is representable by the scheme $G/H\times X$.
Since the morphism $\sigma\colon G\times X\to X$ defining the $G$-action on $X$ is $H$-invariant, we have the induced morphism
\begin{gather*}
\sigma^H\colon\ G\timesH X\to X.
\end{gather*}
We define the morphism
\begin{gather*}
\varepsilon_X^H\colon\ X\to G\timesH X
\end{gather*}
to be the composition $X\xrightarrow{\varepsilon_X}G\times X\xrightarrow{\,\,q\,\,}G\timesH X$,
where $q$ is the canonical quotient map. Then we have $\sigma^{H}\circ\varepsilon_X^H=\id_X$. The $G$-action on $G\times X$ given by
\begin{gather}\label{3b}
\widetilde{\sigma}\colon\ G\times (G\times X)\to G\times X,\qquad
(g,g',x)\mapsto (gg',x)
\end{gather}
induces the $G$-action on $G\timesH X$. With respect to this $G$-action on $G\timesH X$, the morphism $\varepsilon_X^H$ is~$\alpha$-equivariant, and $\sigma^H$ is $G$-equivariant. Thus $\big(\sigma^H\big)^*\cA$ is a $G$-equivariant $G\timesH X$-algebra, and we have the following functor
\begin{gather*}
\Phi\defeq\big(\varepsilon_X^H\big)^*_{\alpha}\colon\ \Qcoh _G\big(\sigma^H\big)^*\cA\to \Qcoh_H\cA.
\end{gather*}
The following is a generalization of~\cite[Lemma~2.13]{bfk}

\begin{lem}
The functor $\Phi\colon \Qcoh _G\big(\sigma^H\big)^*\cA\to \Qcoh_H\cA$ is an equivalence.
\end{lem}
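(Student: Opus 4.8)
The plan is to reduce the statement to its non-algebra version, namely \cite[Lemma~2.13]{bfk}, which asserts that the pull-back functor
\[
\Phi_0\defeq(\varepsilon_X^H)^*_\alpha\colon\ \Qcoh_G(G\timesH X)\to\Qcoh_H X
\]
is an equivalence (recall that $G\timesH X$ is a scheme by \cite[Lemma~2.16(a)]{bfk}), and then to observe that $\Phi$ is merely the restriction of $\Phi_0$ to categories of modules over an algebra object. Concretely, I would check the following three points. First, $\Phi$ is compatible with $\Phi_0$ via the forgetful functors $U_1\colon\Qcoh_G(\sigma^H)^*\cA\to\Qcoh_G(G\timesH X)$ and $U_2\colon\Qcoh_H\cA\to\Qcoh_H X$ that forget the action of the algebra: the square $U_2\circ\Phi=\Phi_0\circ U_1$ commutes because on underlying sheaves both composites are the pull-back along $\varepsilon_X^H$. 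Second, since $\sigma^H\circ\varepsilon_X^H=\id_X$ is a composition of a $G$-equivariant morphism with an $\alpha$-equivariant one, pseudofunctoriality of pull-back gives an isomorphism $\Phi\circ(\sigma^H)^*\cong\Res^G_H$ of functors $\Qcoh_G\cA\to\Qcoh_H\cA$; in particular $\Phi$ carries the $G$-equivariant $(G\timesH X)$-algebra $(\sigma^H)^*\cA$ to $\Res^G_H\cA$, compatibly with the algebra structures. Third, $\Phi_0$, being a pull-back functor, is symmetric monoidal for the tensor products $(-)\otimes_\cO(-)$ on the two categories, so its quasi-inverse $\Psi_0$ is monoidal as well.

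Granting these three points, the conclusion is formal: a symmetric monoidal equivalence induces an equivalence between the category of module objects over an algebra object and the category of module objects over its image. Indeed, $\Qcoh_G(\sigma^H)^*\cA$ is precisely the category of $(\sigma^H)^*\cA$-module objects inside the symmetric monoidal category $\Qcoh_G(G\timesH X)$, the functor induced by $\Phi_0$ acts on an object by $\Phi_0$ and on the action map $\cM\otimes_\cO(\sigma^H)^*\cA\to\cM$ through the base-change isomorphism of $(\varepsilon_X^H)^*_\alpha$ — which is exactly what $\Phi$ does — and by the second point its target is the category of $\Res^G_H\cA$-module objects inside $\Qcoh_H X$, that is, $\Qcoh_H\cA$. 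Hence $\Phi$ is an equivalence; an explicit quasi-inverse sends an $H$-equivariant $\cA$-module $\cN$ to $\Psi_0(\cN)$ equipped with the $(\sigma^H)^*\cA$-action obtained by transporting the action $\cN\otimes_{\cO_X}\cA\to\cN$ along the isomorphisms $\Psi_0(\cN\otimes_{\cO_X}\cA)\cong\Psi_0(\cN)\otimes_\cO\Psi_0(\cA)\cong\Psi_0(\cN)\otimes_\cO(\sigma^H)^*\cA$.

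The routine part of this argument is the bookkeeping of the $G$- and $H$-equivariant structures under pull-back and descent, which proceeds exactly as in the sheaf case treated in \cite{bfk}. The one step that requires genuine care — and the main thing to pin down — is the compatibility of the base-change isomorphisms for the pull-back $(\varepsilon_X^H)^*_\alpha$ with the equivariant structures, since this is what guarantees that the transported module action still satisfies the associativity and unit axioms; once that is in place, the passage from \cite[Lemma~2.13]{bfk} to the present statement is purely formal. Alternatively, one can bypass the monoidal formalism and simply repeat the construction of the quasi-inverse given in the proof of \cite[Lemma~2.13]{bfk}, carrying the $\cA$-module structure along at every step.
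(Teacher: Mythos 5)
Your proof is correct but takes a genuinely different route from the paper's. The paper follows the descent argument of~\cite[Lemma~1.3]{tho}: via Proposition~\ref{main app} it identifies $\Qcoh\big(\sigma^H\big)^*\cA$ with $\Qcoh_H\sigma^*\cA$, unwinds an object of $\Qcoh_G\big(\sigma^H\big)^*\cA$ into a pair consisting of an $H$-equivariant $\sigma^*\cA$-module together with a cocycle for the $G$-action $\widetilde\sigma$, reinterprets this as an object of $\Qcoh_H[\sigma^*\cA/G]$, and then collapses $[G\times X/G]$ back to $X$ by noting that $\widetilde\sigma$ acts trivially on the $X$-factor. You instead quote the non-algebra case \cite[Lemma~2.13]{bfk} as a black box~--- $\Phi_0=(\varepsilon_X^H)^*_\alpha\colon\Qcoh_G(G\timesH X)\to\Qcoh_H X$ is a monoidal equivalence~--- and transport the algebra object $\big(\sigma^H\big)^*\cA$ and its category of module objects along it; the identity $\sigma^H\circ\varepsilon_X^H=\id_X$ and pseudofunctoriality of equivariant pull-back show the image of $\big(\sigma^H\big)^*\cA$ is $\Res^G_H\cA$, so the target module category is $\Qcoh_H\cA$. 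Your approach is shorter and exposes the structural reason the lemma holds; the paper's is more explicit and self-contained (it never invokes \cite[Lemma~2.13]{bfk} directly, and in fact reproves it in passing). One small remark: since $\cA$ is not assumed commutative, symmetry of the monoidal structure plays no role~--- you only need $\Phi_0$ to be (strong) monoidal so that monoid objects and their modules transfer~--- and the centrality of $\cO$ built into the definition of an $X$-algebra is exactly what makes $\big(\sigma^H\big)^*\cA$ a monoid object in $\Qcoh_G(G\timesH X)$ in the first place.
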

\begin{proof} This can be proved by a similar argument as in~\cite[Lemma~1.3]{tho}.
 Since $\big(\sigma^H\big)^*\cA$ is isomorphic to the restriction of the big fppf sheaf $[\sigma^*\cA/H]$ to the small Zariski site of~\mbox{$G\timesH X$}, by Proposition~\ref{quotient correspond} we have an equivalence $\Qcoh\big(\sigma^H\big)^*\cA\cong \Qcoh_H\sigma^*\cA$. Thus objects in $\Qcoh _G\big(\sigma^H\big)^*\cA$ are identified with pairs
\begin{equation}\label{object 1}
\Big(\big(\cM,\uptheta^{\cM}\big), \uptheta^{\left(\cM,\uptheta^{\cM}\right)}\Big)
\end{equation}
{\sloppy of $\big(\cM,\uptheta^{\cM}\big)\in \Qcoh_H\sigma^*\cA$ and an isomorphism $\uptheta^{\left(\cM,\uptheta^{\cM}\right)}\colon \widetilde{\pi}^*\big(\cM,\uptheta^{\cM}\big)\simto \widetilde{\sigma}^*\big(\cM,\uptheta^{\cM}\big)$ in $\Qcoh_H(\sigma\circ\widetilde{\sigma})^*\cA$ satisfying the conditions as in \eqref{compati}, where $\widetilde{\pi}\colon G\times G\times X\to G\times X$ and $\widetilde{\sigma}\colon G\times G\times X\to G\times X$ are the projection and the group action in \eqref{3b} respectively. If we write $\widetilde{\uptheta}^{\cM}\colon \widetilde{\pi}^*\cM\simto\widetilde{\sigma}^*\cM$ for the isomorphism in $\Qcoh(\sigma\circ\widetilde{\sigma})^*\cA$ defining the isomorphism $\uptheta^{\left(\cM,\uptheta^{\cM}\right)}$, then the pair $(\cM,\widetilde{\uptheta}^{\cM})$ is an object in $\Qcoh_G\sigma^*\cA$, and $\uptheta^{\cM}$ defines a $H$-equivariant structure on~$\big(\cM,\widetilde{\uptheta}^{\cM}\big)$. Thus the object~\eqref{object 1} uniquely corresponds to an object in $\Qcoh_H[\sigma^*\cA/G]$. Since $G$ trivially acts on $X$ in the $G$-action~\eqref{3b}, the composition of $\varepsilon_X\colon X\to G\times X$ and the natural projection $G\times X\to [G\times X/G]$ gives an isomorphism $ X\simto[G\times X/G]$. Via this isomorphism, the sheaf $[\sigma^*\cA/G]$ on $[G\times X/G]$ corresponds to the sheaf $\cA$ on $X$. Hence the assignment
 \begin{gather*}
 \Bigl(\big(\cM,\uptheta^{\cM}\big), \uptheta^{\left(\cM,\uptheta^{\cM}\right)}\Bigr)\mapsto \big(\varepsilon_X^*\cM,\uptheta^{\cM}|_{H\times \{1\}\times X}\big)
 \end{gather*}}\noindent
 defines an equivalence $\Qcoh_G[\sigma^*\cA/H]\simto \Qcoh_H\cA$, and $\Phi$ is isomorphic to this equivalence via $\Qcoh _G\big(\sigma^H\big)^*\cA\cong \Qcoh_G[\sigma^*\cA/H]$.
 \end{proof}

Since the morphism $\sigma^H\colon G\timesH X\to X$ is $G$-equivariant, we have the direct image functor $\sigma^H_*\colon\Qcoh_G\big(\sigma^{H}\big)^*\cA\to\Qcoh_G\cA$, and
we define the {\it induction functor}
\begin{gather*}
\Ind^G_H\colon\ \Qcoh_H \cA\to \Qcoh_G\cA
\end{gather*}
to be the composition $\big(\sigma^H\big)_*\circ \Phi^{-1}$. If $H$ is trivial, we write $\Ind\defeq\Ind^G_{\{1\}}$.
\begin{lem}\label{split mono} Notation is the same as above.
\begin{itemize}\itemsep=0pt
\item[$1.$] The functor $\Res^G_H$ is exact, and if $G/H$ is affine, the functor $\Ind^G_H$ is also exact.
\item[$2.$] The restriction functor $\Res^G_H$ is left adjoint to the induction functor $\Ind^G_H$.
\item[$3.$] If $H$ is a normal subgroup of $G$ and $G/H$ is reductive affine algebraic group, the adjunction morphism
\begin{gather*}
\mu\colon\ \id\to \Ind^G_H\circ \Res^G_H
\end{gather*}
is a split mono, i.e., there exists a functor morphism $\nu\colon\Ind^G_H\circ\Res^G_H\to \id$ such that the composition $\nu\circ\mu$ is the identity morphism of functors.

\end{itemize}
\begin{proof}
(1) Since the morphism $\sigma^H$ is flat, the pull-back $\big(\sigma^H\big)^*$ is exact. Since $\sigma^{H}\circ\varepsilon_X^H=\id_X$, the functor $\Res^G_H$ is isomorphic to the composition $\Phi\circ\big(\sigma^{H}\big)^*$, and thus $\Res^G_H$ is exact. If $G/H$ is affine, the morphism $\sigma^H$ is an affine morphism since $G\timesH X$ is isomorphic to $G/H\times X$. Then the direct image $\big(\sigma^{H}\big)_*$ is exact, and so is $\Ind^G_H$.

(2) This follows from the adjunction $\big(\sigma^{H}\big)^*\dashv\big(\sigma^{H}\big)_*$ and the isomorphism $\Res^G_H\cong \Phi\circ\big(\sigma^{H}\big)^*$.

(3) It is enough to show that the adjunction morphism
\begin{gather*}
\widetilde{\mu}\colon\ \id \to \big(\sigma^H\big)_*\big(\sigma^H\big)^*
\end{gather*}
is a split mono.
 Note that we have the following cartesian square
\[
\begin{tikzcd}
G\timesH X\arrow[rr,"\sigma^H"]\arrow[d]&&X\arrow[d, "q"]
\\
G/H\arrow[rr, "p"]&&\Spec k,
\end{tikzcd}
\]
where $p$ and $q$ are the morphisms defining the base field $k$.
If $G/H$ is a reductive affine algebraic group, it is linearly reductive. Then the natural morphism
$k\to p_*p^*k=k[G/H]$
is a split mono, and so is the adjunction morphism
\begin{gather*}
\cO_X\to\big(\sigma^H\big)_*\big(\sigma^H\big)^*\cO_{X}
\end{gather*}
by the base change formula for the above cartesian square. For any $\cM\in \Qcoh_G\cA$, by~Lem\-ma~\ref{proj form} the adjunction $\widetilde{\mu}(\cM)\colon \cM\to \big(\sigma^H\big)_*\big(\sigma^H\big)^*\cM$ is isomorphic to the tensor product \begin{gather*}\Bigr(\cO_X\to\big(\sigma^H\big)_*\big(\sigma^H\big)^*\cO_{X}\Bigr)\otimes_{\cO_X}\cM,\end{gather*} and therefore $\widetilde{\mu}(\cM)$ is also a split mono.
\end{proof}
\end{lem}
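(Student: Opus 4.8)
The plan is to transport all three statements to the morphism $\sigma^H\colon G\timesH X\to X$ via the equivalence $\Phi$ of the preceding lemma and the identification $G\timesH X\cong G/H\times X$. For part~(1), I would first note that $\Res^G_H$ is the pull-back along $\id_X=\sigma^H\circ\varepsilon_X^H$ regarded $\alpha$-equivariantly; since $\varepsilon_X^H$ is $\alpha$-equivariant and $\sigma^H$ is $G$-equivariant, this factors as $\Res^G_H\cong(\varepsilon_X^H)^*_\alpha\circ(\sigma^H)^*=\Phi\circ(\sigma^H)^*$. Under the identification $G\timesH X\cong G/H\times X$ the map $\sigma^H$ is the second projection, which is flat since $G/H$ is smooth over $k$; hence $(\sigma^H)^*$ is exact, and as $\Phi$ is an equivalence, $\Res^G_H$ is exact. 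If moreover $G/H$ is affine then this projection, hence $\sigma^H$, is affine, so $(\sigma^H)_*$ is exact on quasi-coherent modules and $\Ind^G_H=(\sigma^H)_*\circ\Phi^{-1}$ is exact. Part~(2) then follows by chaining the Hom-isomorphism of the adjunction $(\sigma^H)^*\dashv(\sigma^H)_*$ with the fact that the equivalence $\Phi$ preserves Hom-sets, using $\Res^G_H\cong\Phi\circ(\sigma^H)^*$ and $\Ind^G_H=(\sigma^H)_*\circ\Phi^{-1}$; in particular $\Ind^G_H\circ\Res^G_H\cong(\sigma^H)_*(\sigma^H)^*$, under which the unit $\mu$ corresponds to the unit $\widetilde\mu\colon\id\to(\sigma^H)_*(\sigma^H)^*$.

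For part~(3), by the last remark it suffices to produce a natural retraction of $\widetilde\mu$ on $\Qcoh_G\cA$. I would consider the cartesian square whose top row is $\sigma^H\colon G\timesH X\to X$, whose right column is the structure morphism $X\to\Spec k$, whose bottom row is $p\colon G/H\to\Spec k$, and whose left column is $G\timesH X\cong G/H\times X\to G/H$; all four morphisms are $G$-equivariant once $G$ acts trivially on $\Spec k$ and by left translation on $G/H$, and this translation action descends to the group $G/H$ precisely because $H$ is normal. Since $G/H$ is reductive, hence linearly reductive in characteristic zero, the unit $k\to p_*p^*k=k[G/H]$ admits a $G/H$-equivariant (Reynolds) retraction, hence a $G$-equivariant one; applying flat affine base change along the square converts this into a $G$-equivariant $\cO_X$-linear retraction $\nu_0$ of $\widetilde\mu(\cO_X)\colon\cO_X\to(\sigma^H)_*(\sigma^H)^*\cO_X$. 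Finally, the projection formula (Lemma~\ref{proj form}, which applies since $\sigma^H$ is flat and affine) identifies $\widetilde\mu(\cM)$ with $\widetilde\mu(\cO_X)\otimes_{\cO_X}\cM$ for every $\cM\in\Qcoh_G\cA$, so $\nu_0\otimes_{\cO_X}\cM$ is an $\cA$-linear $G$-equivariant retraction of $\widetilde\mu(\cM)$ which is natural in $\cM$; this gives the required $\nu$.

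The main difficulty is the $G$-equivariance in part~(3): a splitting of $k\hookrightarrow k[G/H]$ merely as $k$-vector spaces is useless, and a $G$-equivariant splitting would in general force $G$ itself to be linearly reductive. The point is that normality of $H$ makes the left-translation $G$-action on $G/H$ descend to the reductive group $G/H$, so that linear reductivity of $G/H$ alone suffices. The other place needing a little care is verifying that the projection-formula isomorphism genuinely intertwines $\widetilde\mu(\cM)$ with $\widetilde\mu(\cO_X)\otimes_{\cO_X}\cM$, so that the retraction obtained is $\cA$-linear and functorial in $\cM$; the adjunction bookkeeping in parts~(1) and~(2) is routine.
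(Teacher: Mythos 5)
Your proposal is correct and follows essentially the same route as the paper: factor $\Res^G_H \cong \Phi\circ(\sigma^H)^*$ and $\Ind^G_H=(\sigma^H)_*\circ\Phi^{-1}$, reduce to the unit $\widetilde\mu\colon\id\to(\sigma^H)_*(\sigma^H)^*$, split it on $\cO_X$ using linear reductivity of $G/H$ plus base change along the same cartesian square, and extend to all $\cM$ via the projection formula (Lemma~\ref{proj form}). The extra remarks you add — that normality of $H$ is what makes the translation action on $G/H$ descend to a $G/H$-action so that linear reductivity of $G/H$ (rather than of $G$) suffices, and that the retraction must be $G$-equivariant and $\cA$-linear — are not spelled out in the paper's proof but are implicit and correctly identified as the crux.
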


We will apply the above result to the following:

\begin{lem}\label{faithful general}
Let $\scrC$ and $\scrD$ be additive categories, and
$F\colon \scrC\to \scrD$ and $G\colon \scrD\to \scrC$ additive functors. If $F$ is left adjoint to $G$, and if the adjunction morphism $\mu\colon \id_{\scrC}\to G\circ F$ is a split mono, then the functor $F$ is faithful.
\begin{proof}
For a morphism $f\colon A\to B$ in $\scrC$, assume that $F(f)=0$. It is enough to show that $f=0$. We have a commutative diagram
\[
\begin{tikzcd}
A\arrow[rr, "\mu(A)"]\arrow[rrrr, bend left, "\id"]\arrow[d, "f"']&&GF(A)\arrow[d, "GF(f)"]\arrow[rr, "\nu(A)"]&&A\arrow[d, "f"]\\
B\arrow[rr, "\mu(B)"]&&GF(B)\arrow[rr, "\nu(B)"]&& B.
\end{tikzcd}
\]
Therefore, $f=\nu(B)\circ GF(f)\circ \mu(A)=\nu(B)\circ G(0)\circ \mu(A)=0$.
\end{proof}
\end{lem}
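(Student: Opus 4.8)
The plan is to establish faithfulness of $F$ directly on morphisms: given $f\colon A\to B$ in $\scrC$ with $F(f)=0$, I will deduce that $f=0$. The only ingredients needed are the naturality of the unit $\mu\colon \id_{\scrC}\to G\circ F$ of the adjunction $F\dashv G$ and the hypothesis that $\mu$ is a split monomorphism.

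First I would write down the naturality square for $\mu$ at $f$, namely $\mu(B)\circ f = GF(f)\circ \mu(A)$. Since $F$ and $G$ are additive functors they preserve zero morphisms, so $F(f)=0$ yields $GF(f)=0$, and hence $\mu(B)\circ f=0$. Then, choosing a splitting $\nu\colon G\circ F\to \id_{\scrC}$ of $\mu$ (so in particular $\nu(B)\circ\mu(B)=\id_B$), composing the previous identity on the left with $\nu(B)$ gives $f=\nu(B)\circ\mu(B)\circ f=\nu(B)\circ 0=0$, as desired. Equivalently, one could phrase this via the adjunction isomorphism $\Hom_{\scrD}(F(A),F(B))\cong \Hom_{\scrC}(A,GF(B))$, under which the map $f\mapsto F(f)$ becomes $f\mapsto \mu(B)\circ f$; faithfulness of $F$ then amounts to each $\mu(B)$ being a monomorphism, which holds because a split mono is in particular a mono.

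There is essentially no obstacle in this argument; it is a one-line diagram chase. The only points worth flagging are that additivity of the functors is precisely what forces $GF(f)=0$ from $F(f)=0$, and that naturality of the retraction $\nu$ is never used — only the objectwise identity $\nu(B)\circ\mu(B)=\id_B$ — so the weaker hypothesis that each component $\mu(B)$ is split mono would already suffice for the conclusion.
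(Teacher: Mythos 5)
Your proof is correct and is essentially the same diagram chase as the paper's: use naturality of $\mu$ to rewrite $GF(f)\circ\mu(A)$ as $\mu(B)\circ f$, note that $GF(f)=0$, and cancel $\mu(B)$ using the retraction $\nu(B)$. Your closing observation is a fair refinement: the paper's diagram also invokes the naturality square for $\nu$, but as you point out only the objectwise identity $\nu(B)\circ\mu(B)=\id_B$ is needed, so the functoriality of the splitting $\nu$ is not actually used.
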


\subsection{Fundamental properties of equivariant modules} In this subsection, we prove that the category of equivariant modules over a certain equivariant algebra has enough injectives and enough locally free modules.

Let $X$ be a quasi-compact and quasi-separated scheme with an action from an affine algebraic group $G$, and $\uprho\colon \cO_X\to\cA$ a $G$-equivariant $X$-algebra.

\begin{prop}\label{grothen prop}
 The category $\Qcoh_G\cA$ is a Grothendieck category. In particular, it has enough injectives.
\begin{proof}
The category $\Qcoh_G\cA$ is an abelian category with small direct sums, and so it suffices to prove that (1) filtered colimits are exact in $\Qcoh_G\cA$ and that (2) $\Qcoh_G\cA$ has a generator.

(1) For a point $x\in X$, denote by $F_x\colon \Qcoh_G\cA\to \Mod\cA_x$ the composition
\begin{gather*}
\Qcoh_G\cA\xrightarrow{\Res}\Qcoh\cA\xrightarrow{(-)_x}\Mod\cA_x,
\end{gather*}
where the latter is taking the stalk at $x$, and $\cA_x$ is the stalk of $\cA$ at $x$. Then a sequence in~$\Qcoh_G\cA$ is exact if and only if for every $x\in X$ the sequence in $\Mod\cA_x$ induced by $F_x$ is exact. Therefore, since $F_x$ commutes with filtered colimits and $\Mod \cA_x$ is a Grothendieck category, filtered colimits in $\Qcoh_G\cA$ are also exact.

(2)
It is well known that $\Qcoh_GX$ is a Grothendieck category (more generally, the category of quasi-coherent sheaves on an algebraic stack is a Grothendieck category~\cite[Proposition~14.2; 0781]{stacks}, \cite[Corollary~5.10]{tlrv}). In particular, $\Qcoh_GX$ has a generator $\cG\in \Qcoh_G X$. We show that
\begin{gather*}\cG_{\cA}\defeq\cG\otimes_{\cO_X}\cA\in \Qcoh_G \cA\end{gather*} is a generator. Let $\cM\in\Qcoh_G\cA$ be an object. Then, there is a set $I$ and a surjective morphism
\begin{gather*}
p\colon\ \cG^{\oplus I}\surjto \cM_{\uprho}
\end{gather*}
in $\Qcoh_G X$. Since the functor $(-)\otimes_{\cO_X}\cA\colon \Qcoh_G X\to \Qcoh_G\cA$ is left adjoint to the functor $(-)_{\uprho}$, it is right exact and commutes with small direct sums. Hence the extension of $p$ by $\uprho\colon \cO_X\to \cA$ defines a surjective morphism
\begin{gather*}
p_{\cA}\colon\ \cG_{\cA}^{\oplus I}\surjto \cM_{\uprho}\otimes_{\cO_X}\cA.
\end{gather*}
Composing this with a natural surjection $\cM_{\uprho}\otimes_{\cO_X}\cA\to \cM$, we have a surjective morphism $\cG_{\cA}^{\oplus I}\surjto \cM$. This shows that $\cG_{\cA}$ is a generator in $\Qcoh_G \cA$.
\end{proof}
\end{prop}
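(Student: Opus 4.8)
The plan is to check that $\Qcoh_G\cA$ satisfies the two conditions that upgrade an abelian category with small coproducts to a Grothendieck category, namely exactness of filtered colimits (Grothendieck's axiom AB5) and the existence of a generator; the ``in particular'' clause is then the standard fact that every Grothendieck category has enough injectives.

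First I would record that $\Qcoh_G\cA$ is abelian --- because both $\Qcoh X$ and the category of right $\cA$-modules are --- and that it has small coproducts, formed componentwise. For AB5 the key point is that exactness in $\Qcoh_G\cA$ is detected, and reflected, stalkwise after forgetting the equivariant structure: for each $x\in X$ the composite $F_x\colon \Qcoh_G\cA\xrightarrow{\Res}\Qcoh\cA\xrightarrow{(-)_x}\Mod\cA_x$ is exact, and a sequence in $\Qcoh_G\cA$ is exact precisely when all the $F_x$ send it to exact sequences. Since each $F_x$ commutes with filtered colimits, and $\Mod\cA_x$ is a Grothendieck category (so filtered colimits are exact there), filtered colimits in $\Qcoh_G\cA$ are exact as well.

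The substantive step is producing a generator. I would start from the known fact that $\Qcoh_G X\cong\Qcoh[X/G]$ is a Grothendieck category (quasi-coherent sheaves on an algebraic stack always form one), so it has a generator $\cG\in\Qcoh_G X$. The claim is that $\cG_{\cA}\defeq\cG\otimes_{\cO_X}\cA$ is a generator of $\Qcoh_G\cA$. Given $\cM\in\Qcoh_G\cA$, pick a set $I$ and a surjection $\cG^{\oplus I}\surjto \cM_{\uprho}$ in $\Qcoh_G X$; apply the extension functor $(-)\otimes_{\cO_X}\cA$, which is right exact and preserves coproducts because it is left adjoint to restriction by $\uprho$, to get a surjection $\cG_{\cA}^{\oplus I}\surjto \cM_{\uprho}\otimes_{\cO_X}\cA$, and compose with the canonical surjection $\cM_{\uprho}\otimes_{\cO_X}\cA\surjto\cM$, $m\otimes a\mapsto ma$. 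This gives a surjection $\cG_{\cA}^{\oplus I}\surjto\cM$ in $\Qcoh_G\cA$, so $\cG_{\cA}$ is a generator.

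The main obstacle is more a matter of careful bookkeeping than of depth: one must be sure the stalkwise criterion in the AB5 step is legitimate --- that $\Res$ followed by $(-)_x$ genuinely reflects exactness for equivariant $\cA$-modules, which rests on exactness being a local, non-equivariant condition --- and one must check that the canonical map $\cM_{\uprho}\otimes_{\cO_X}\cA\to\cM$ is both surjective and $G$-equivariant, so that the final composite is a morphism in $\Qcoh_G\cA$. Neither requires anything beyond the adjunctions and equivalences already in place.
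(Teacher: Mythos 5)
Your proposal is correct and follows essentially the same route as the paper: the same stalkwise criterion $F_x$ for AB5, and the same generator $\cG_{\cA}=\cG\otimes_{\cO_X}\cA$ obtained by extending a surjection $\cG^{\oplus I}\surjto\cM_{\uprho}$ along $(-)\otimes_{\cO_X}\cA$ and composing with the canonical surjection $\cM_{\uprho}\otimes_{\cO_X}\cA\surjto\cM$. The two bookkeeping points you flag are exactly the ones the paper implicitly relies on, and both hold for the reasons you give.
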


{\samepage\begin{dfn} Notation is the same as above.
\begin{itemize}\itemsep=0pt
\item[1.] A $G$-equivariant $\cA$-module $\big(\cM,\uptheta^{\cM}\big)$ is said to be {\it locally free}, if there is an open covering $\{U_i\hookto X\}_{i\in I}$ of $X$ such that for every $U_i$ the restriction $\cM|_{U_i}$ of the underlying $\cA$-module~$\cM$ is a free $\cA|_{U_i}$-module.

\item[2.] We say that $\big(\cA,\uptheta^{\cA}\big)$ {\it satisfies the resolution property}, if for any $G$-equivariant coherent $\cA$-module $\cN\in \coh_G\cA$, there exists a surjective morphism $\big(\cE,\uptheta^{\cE}\big)\surjto \big(\cN,\uptheta^{\cN}\big)$ in $\coh_G\cA$ from a $G$-equivariant locally free coherent $\cA$-module $\big(\cE,\uptheta^{\cE}\big)$.
\item[3.] A $G$-scheme $X$ {\it satisfies the $G$-equivariant resolution property} if the $G$-equivariant $X$-al\-ge\-bra $(\cO_X,\uptheta^{\can})$ has the resolution property.
\end{itemize}
\end{dfn}

}

\begin{rem}
(1) If $X=\Spec R$ is an affine noetherian scheme, a $G$-equivariant coherent $R$-algebra $A$ satisfies the resolution property if and only if the category $\Mod_GA$ has enough projectives.

(2) Assume that $X$ is noetherian and that $\cA$ is coherent. If $\big(\cA,\uptheta^{\cA}\big)$ satisfies the resolution property, for every $G$-equivariant $\cA$-module $\cM$, there is a surjection $\cE\surjto \cM$ from a $G$-equivariant locally free $\cA$-module $\cE$. This follows since there is a set $\{\cM_i\}_{i\in I}$ of $G$-equivariant coherent submodules $\cM_i$ of $\cM$ such that the natural map $\bigoplus_{i\in I}\cM_i\to \cM$ is surjective.
\end{rem}

\begin{prop}\label{resol prop}
Assume that $\big(\cA,\uptheta^{\cA}\big)$ is coherent and that $X$ has the $G$-equivariant resolution property. Then $\big(\cA,\uptheta^{\cA}\big)$ satisfies the resolution property.
\begin{proof}
Let $\cM\in \coh_G\cA$ be a $G$-equivariant coherent $\cA$-module. By the assumption, there is a surjective morphism $p\colon \cE\surjto \cM_{\uprho}$ from a $G$-equivariant locally free coherent $\cO_X$-module $\cE$. Then the extension
\begin{gather*}
p\otimes_{\cO_X}\cA\colon\ \cE\otimes _{\cO_X}\cA\to \cM_{\uprho}\otimes_{\cO_X}\cA
\end{gather*}
of $p$ by $\uprho\colon \cO_{X}\to \cA$ is also a surjection in $\Qcoh_G \cA$. Composing with the natural surjection $\cM_{\uprho}\otimes_{\cO_{X}} \cA\surjto \cM$ gives a surjective morphism $\cE\otimes_{\cO_X} \cA\surjto \cM$ from a $G$-equivariant locally free $\cA$-module.
\end{proof}
\end{prop}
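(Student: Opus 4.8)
The plan is to bootstrap the $G$-equivariant resolution property from the structure sheaf $\cO_X$ up to $\cA$ using the extension/restriction adjunction $(-)\otimes_{\cO_X}\cA\dashv(-)_{\uprho}$ attached to the $G$-equivariant ring map $\uprho\colon\cO_X\to\cA$ (which is $G$-equivariant by Remark~\ref{linearity}). Fix a $G$-equivariant coherent $\cA$-module $\big(\cM,\uptheta^{\cM}\big)$. First I would restrict along $\uprho$ to view $\cM$ as an object $\cM_{\uprho}\in\coh_G X$; this is legitimate since a coherent $\cA$-module is by definition a coherent $\cO_X$-module. As $X$ has the $G$-equivariant resolution property, there is a surjection $p\colon\cE\surjto\cM_{\uprho}$ in $\coh_G X$ with $\cE$ a $G$-equivariant locally free coherent $\cO_X$-module.

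Next I would transport this surjection into $\coh_G\cA$. Since $(-)\otimes_{\cO_X}\cA\colon\Qcoh_G X\to\Qcoh_G\cA$ is a left adjoint, it is right exact, so $p\otimes_{\cO_X}\cA\colon\cE\otimes_{\cO_X}\cA\to\cM_{\uprho}\otimes_{\cO_X}\cA$ is still surjective in $\Qcoh_G\cA$. Here the source $\cE\otimes_{\cO_X}\cA$ is coherent (on an affine open it is a finite direct sum of copies of the coherent sheaf $\cA$) and $G$-equivariant locally free over $\cA$: on any open set where $\cE$ is $\cO_X$-free of rank $n$ one has $\cE\otimes_{\cO_X}\cA\cong\cA^{\oplus n}$, and its equivariant structure is the one inherited from $\cE\in\Qcoh_G X$. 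Finally I would compose with the counit $\cM_{\uprho}\otimes_{\cO_X}\cA\to\cM$ of the adjunction, that is, the multiplication $m\otimes a\mapsto ma$; this is $\cA$-linear, automatically $G$-equivariant as a counit between the equivariant categories, and surjective because $\cA$ is unital. Composing the two surjections yields $\cE\otimes_{\cO_X}\cA\surjto\cM$ in $\coh_G\cA$ with $\cE\otimes_{\cO_X}\cA$ a $G$-equivariant locally free coherent $\cA$-module, which is precisely what is required.

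The argument uses no deep input; the main point requiring care --- the closest thing to an obstacle --- is the bookkeeping that each morphism produced, namely $p\otimes_{\cO_X}\cA$ and the multiplication map, is simultaneously $\cA$-linear \emph{and} $G$-equivariant, so that the whole diagram lives in $\coh_G\cA$. For the multiplication map this amounts to the fact, recorded in the subsection on functors of equivariant modules, that $(-)\otimes_{\cO_X}\cA\dashv(-)_{\uprho}$ is an adjunction of the \emph{equivariant} categories, and for $p\otimes_{\cO_X}\cA$ it is the functoriality of the equivariant extension functor; both verifications are routine.
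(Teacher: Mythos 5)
Your proposal is essentially identical to the paper's proof: restrict $\cM$ along $\uprho$ to get a coherent $G$-equivariant $\cO_X$-module, use the $G$-equivariant resolution property of $X$ to find a locally free surjection $\cE\surjto\cM_\uprho$, extend along $\uprho$ via $(-)\otimes_{\cO_X}\cA$, and compose with the multiplication counit $\cM_\uprho\otimes_{\cO_X}\cA\surjto\cM$. The only difference is that you spell out a few of the routine verifications (right exactness of the left adjoint, local freeness and coherence of $\cE\otimes_{\cO_X}\cA$, equivariance of the counit) that the paper leaves implicit.
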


\begin{dfn}
A $G$-equivariant (resp.\ coherent) $\cA$-module $\big(\cM,\uptheta^{\cM}\big)$ has a {\it finite locally free resolution} in $\Qcoh_G\cA$ (resp.\ in $\coh_G\cA$) if there exists an exact sequence
\begin{gather*}
0\to \cE^{-n}\to\cdots\to\cE^0\to \cM\to0
\end{gather*}
in $\Qcoh_G\cA$ (resp.\ in $\coh_G\cA$) such that each $\cE^i$ is locally free.
\end{dfn}

\begin{lem}\label{enough proj}
Assume that $X$ is a normal scheme of finite type over $k$ and that $G$ is affine.
If~$X$ has an ample family of line bundles, then $\big(\cA,\uptheta^{\cA}\big)$ satisfies the resolution property.
\begin{proof}
This follows from~\cite[Theorem~2.29]{bfk} and Proposition~\ref{resol prop}.
\end{proof}
\end{lem}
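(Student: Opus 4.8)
The plan is to deduce this from the corresponding statement for the structure sheaf, together with Proposition~\ref{resol prop}. Recall that Proposition~\ref{resol prop} asserts: if $(\cA,\uptheta^{\cA})$ is a coherent $G$-equivariant $X$-algebra and $X$ has the $G$-equivariant resolution property, then $(\cA,\uptheta^{\cA})$ satisfies the resolution property. Its proof is purely formal, built on the adjunction $(-)\otimes_{\cO_X}\cA\dashv(-)_{\uprho}$: a $G$-equivariant locally free coherent $\cO_X$-module surjecting onto $\cM_{\uprho}$ tensors up to a $G$-equivariant locally free $\cA$-module, which then surjects onto $\cM$ via the canonical map $\cM_{\uprho}\otimes_{\cO_X}\cA\surjto\cM$. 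So the whole problem reduces to showing that $X$ itself has the $G$-equivariant resolution property, i.e.\ that $(\cO_X,\uptheta^{\can})$ has the resolution property.

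For that I would invoke the equivariant resolution theorem of Ballard--Favero--Katzarkov, \cite[Theorem~2.29]{bfk}: when $X$ is a normal scheme of finite type over $k$ carrying an ample family of line bundles and $G$ is an affine algebraic group acting on $X$, every $G$-equivariant coherent $\cO_X$-module receives a surjection from a $G$-equivariant locally free coherent one. (In the quasi-projective case this goes back to Thomason.) The hypotheses of that theorem match exactly the hypotheses of the lemma, so nothing beyond citing it and feeding the conclusion into Proposition~\ref{resol prop} is needed.

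The only point that deserves a moment's care is bookkeeping: Proposition~\ref{resol prop} is stated for coherent $X$-algebras, so one should record that $(\cA,\uptheta^{\cA})$ is taken to be coherent here, as is standing throughout this part of the paper. With that in hand the argument is immediate, and there is no real obstacle: the substance of the lemma is carried entirely by the cited equivariant resolution result, and our contribution is merely to bootstrap it from $\cO_X$-modules to $\cA$-modules.
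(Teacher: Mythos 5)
Your proposal matches the paper's proof exactly: cite \cite[Theorem~2.29]{bfk} to obtain the $G$-equivariant resolution property for $X$, then apply Proposition~\ref{resol prop} to bootstrap from $\cO_X$-modules to $\cA$-modules. The remark about coherence of $\cA$ being needed for Proposition~\ref{resol prop} is a fair bit of bookkeeping, and it is indeed a standing hypothesis in that part of the paper.
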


\begin{lem}\label{finite proj resol}
Let $R$ be a normal ring of finite type over $k$, and $G$ an affine reductive algebraic group acting on $\Spec R$. Let $\big(A,\uptheta^{A}\big)$ be a $G$-equivariant coherent $R$-algebra, and $\big(M,\uptheta^M\big)\in \Mod_GA$ a $G$-equivariant $A$-module. If the underlying $A$-module $M\in \Mod A$ has a finite projective resolution, so does $\big(M,\uptheta^M\big)$. If $M$ is finitely generated and has a finite projective resolution in $\fmod A$, then $\big(M,\uptheta^M\big)$ has a finite projective resolution in $\fmod_GA$.
\begin{proof}
By Lemma~\ref{enough proj}, the categories $\Mod_GA$ and $\fmod_GA$ have enough projectives. Since~$A$ is coherent $R$-algebra, $\fmod_GA$ is an abelian subcategory of $\Mod_GA$. It is standard that an~object~$x$ in an abelian category $\scrA$ has a finite projective resolution if and only if there exists $n>0$ such that $\Ext_{\scrA}^i(x,y)=0$ for any $i>n$ and any $y\in \scrA$. Thus the statements follow from Proposition~\ref{G hom 2}.
\end{proof}
\end{lem}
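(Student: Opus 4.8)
The plan is to reduce both assertions to the standard $\Ext$-vanishing characterization of finite projective dimension and then transport that vanishing across the $G$-invariants functor, which is exact precisely because $G$ is reductive.

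First I would set $X\defeq\Spec R$ and verify that the hypotheses of Lemma~\ref{enough proj} hold: $X$ is normal and of finite type over $k$ by assumption, $G$ is affine, and an affine scheme carries the trivially ample family of line bundles $\{\cO_X\}$. Hence $\big(A,\uptheta^A\big)$ satisfies the resolution property, so by the remark following the definition of the resolution property the category $\Mod_GA$ has enough projectives; since $A$ is a coherent $R$-algebra and $R$ is noetherian, $\fmod_GA$ is an abelian subcategory of $\Mod_GA$ which likewise has enough projectives (here reductivity enters already: with $(-)^G$ exact, equivariant-free modules such as $A\otimes_kV$ for a finite-dimensional representation $V$ are projective in $\fmod_GA$, since $\Hom_{\fmod_GA}(A\otimes_kV,-)=\Hom_{\Mod A}(A,-)^{G}\otimes$-type invariants depend exactly exactly on taking invariants, which is exact). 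I would then invoke the standard homological fact that in an abelian category with enough projectives an object $x$ has projective dimension $\le n$ if and only if $\Ext^i(x,y)=0$ for every $i>n$ and every $y$; this is the criterion to be checked in $\Mod_GA$ and in $\fmod_GA$ respectively.

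Now suppose the underlying module $M$ admits a finite projective resolution over $A$, of length $n$ say; then $\Ext^i_{\Mod A}(M,N)=0$ for all $i>n$ and all $N\in\Mod A$. Applying Proposition~\ref{G hom 2} through the equivalence $\Qcoh_G\cA\simto\Mod_GA$ — legitimate because $G$ is reductive, so the $G$-invariants functor is exact — yields a natural isomorphism $\Ext^i_{\Mod_GA}\big(\big(M,\uptheta^M\big),\big(N,\uptheta^N\big)\big)\cong\Ext^i_{\Mod A}(M,N)^G$ for all $i\ge0$ and all $\big(N,\uptheta^N\big)\in\Mod_GA$. Hence $\Ext^i_{\Mod_GA}\big(\big(M,\uptheta^M\big),-\big)=0$ for $i>n$, and the criterion above produces a finite projective resolution of $\big(M,\uptheta^M\big)$ in $\Mod_GA$. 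For the second assertion, if $M$ is finitely generated with a finite projective resolution in $\fmod A$ then, since finitely generated projective $A$-modules are projective in $\Mod A$, that same resolution gives $\Ext^i_{\Mod A}(M,N)=0$ for all $N\in\Mod A$ and $i>n$; one then runs the identical argument but verifies the criterion inside $\fmod_GA$, using Proposition~\ref{G hom 2} together with the comparisons $\Ext^i_{\fmod A}\cong\Ext^i_{\Mod A}$ and $\Ext^i_{\fmod_GA}\cong\Ext^i_{\Mod_GA}$, which are valid on finitely generated modules (the middle term of an extension of finitely generated modules is finitely generated since $A$ is noetherian, and the finitely generated equivariant projectives just described remain projective in $\Mod_GA$).

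The only genuinely non-formal ingredient is the reductivity of $G$, used twice: to know that the resolution property really delivers enough projectives in both $\Mod_GA$ and $\fmod_GA$, and to make $(-)^G$ exact so that Proposition~\ref{G hom 2} governs all higher $\Ext$-groups rather than merely $\Hom$. Once these are in place the remaining work is pure homological bookkeeping, so I expect the main obstacle to be only the care needed in matching up the coherent and quasi-coherent categories when invoking Proposition~\ref{G hom 2}, not any deeper difficulty.
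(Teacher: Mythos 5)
Your proposal follows the paper's proof exactly: invoke Lemma~\ref{enough proj} (via the trivially ample family $\{\cO_X\}$ on the affine $\Spec R$) to get enough projectives in $\Mod_GA$ and $\fmod_GA$, use the standard characterization of finite projective dimension by $\Ext$-vanishing, and then transport the vanishing through the $G$-invariants isomorphism of Proposition~\ref{G hom 2}, which is available because $G$ is reductive. The extra detail you supply — that reductivity is needed not only for the $\Ext$ comparison but also in passing from the resolution property (enough locally free equivariant modules) to enough \emph{projectives} in the equivariant category — is a fair observation left implicit in the paper, but it does not change the argument.
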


\subsection{Equivariant tilting modules and derived equivalences}\label{derived section}

In this subsection, we prove that an equivariant tilting module induces a derived equivalence of~equivariant algebras, which is considered as an equivariant Morita theory.

Let $X$ be a separated scheme of finite type over $k$, and $G$ an affine algebraic group acting on~$X$.
Let $H\subseteq G$ be a closed normal subgroup of $G$, and $\cA$ a $G$-equivariant coherent $X$-algebra.

\begin{dfn}
Let $\cT\in \coh_G\cA$ be a $G$-equivariant coherent $\cA$-module.
\begin{itemize}\itemsep=0pt
\item[1.] $\cT$ is called a {\it $(G,H)$-tilting module} (resp.\ {\it partial $(G,H)$-tilting module}), if the restriction $\Res^G_H(\cT)$ is a tilting object (resp.\ partial tilting object) in $\Qcoh_H\cA$.
\item[2.] $\cT$ is called a {\it $G$-tilting module} (resp.\ {\it partial $G$-tilting module}) if it is $(G,\{1\})$-tilting (resp.\ partial $(G,\{1\})$-tilting).
\end{itemize}
\end{dfn}
Let $R$ be a normal ring of finite type over $k$, and suppose that there is a $G$-action $\sigma\colon G\times \Spec R\to \Spec R$ on $\Spec R$ such that the $H$-action on $\Spec R$, which is given by the restriction of $\sigma$, is trivial. Let $f\colon X\to \Spec R$ be a $G$-equivariant morphism such that $f_*\cO_X=R$ and the associated morphism $\overline{f}\colon [X/H]\to \Spec R$ is proper. Let $\big(\cT,\uptheta^{\cT}\big)\in \coh_G\cA$ be
a $G$-equivariant coherent $\cA$-module. Then the endomorphism ring
\begin{gather*}
\Lambda\defeq\End_{\cA}(\cT)=f_*\cEnd_{\cA}(\cT)
\end{gather*}
of the underlying coherent $\cA$-module $\cT$ is a $G$-equivariant $R$-algebra.
We define the functor
\begin{gather*}
\F\colon\ \Qcoh_G\cA\to \Mod_{G/H}\Lambda^H
\end{gather*}
to be the composition
\[
\begin{tikzcd}
\Qcoh_G\cA\arrow[rr,"\cHom_{\cA}(\cT\mbox{\hspace{-0.5mm}\tiny ,}-)"]&&\Qcoh_G\cEnd_{\cA}(\cT)\arrow[r,"f_{\star}"]&\Mod_G\Lambda\arrow[r,"(-)^H"]&\Mod_{G/H}\Lambda^H.
\end{tikzcd}
\]
Then the composition
\begin{gather*}
\G\defeq\left(-\otimes \cT\right)\circ f^{\star}\circ \left(\id_p^*(-)\otimes_{\cA^H}\cA\right) \colon\ \Mod_{G/H}\Lambda^H\to \Qcoh_G\cA
\end{gather*}
is left adjoint to $\F$, and it preserves equivariant coherent modules. Note that since the morphism $\overline{f}\colon [X/H]\to \Spec R$ is proper, the functor $\F$ also preserves equivariant coherent modules. The following is one of our motivations of considering equivariant algebras and equivariant modules.

\begin{thm}\label{derived equiv}
Assume that $G/H$ is reductive and that every $\Lambda^H$-module $M\in \Mod\Lambda^H$ is of finite projective dimension.
\begin{itemize}\itemsep=0pt
\item[$1.$]
If $\big(\cT,\uptheta^{\cT}\big)$ is partial $(G,H)$-tilting, then the functor
\begin{gather*}
\bL \G\colon\ \Db\big(\Mod_{G/H}\Lambda^H\big)\hookto \Db\big(\Qcoh_G{\cA}\big)
\end{gather*}
is fully faithful, and it restricts to the fully faithful functor
\begin{gather*}
\bL \G\colon\ \Db\big(\fmod_{G/H}\Lambda^H\big)\hookto \Db\big(\coh_G{\cA}\big).
\end{gather*}
\item[$2.$] If $\big(\cT,\uptheta^{\cT}\big)$ is $(G,H)$-tilting, then the functor
\begin{gather*}
\bL \G\colon\ \Db\big(\Mod_{G/H}\Lambda^H\big)\simto \Db\big(\Qcoh_G{\cA}\big)
\end{gather*}
is an equivalence, and it restricts to the equivalence
\begin{gather*}
\bL \G\colon\ \Db\big(\fmod_{G/H}\Lambda^H\big)\simto \Db\big(\coh_G{\cA}\big).
\end{gather*}
\end{itemize}
\end{thm}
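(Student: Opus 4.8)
The plan is to deduce the statement from its non‑equivariant counterpart over $H$, Theorem~\ref{general tilting}, together with a descent argument along the restriction functors. \emph{Step~1 (reduction to the $H$‑level).} Apply Theorem~\ref{general tilting} with $\scrA:=\Qcoh_H\cA$ and $T:=\Res^G_H\cT$. Here $\Qcoh_H\cA$ is Grothendieck (Proposition~\ref{grothen prop}, $H$ being affine as a closed subgroup of the affine group $G$), hence has enough injectives; $\End_{\Qcoh_H\cA}(\Res^G_H\cT)=\Lambda^H$ by Proposition~\ref{G hom 2}, every $\Lambda^H$‑module has finite projective dimension by hypothesis, and $\Res^G_H\cT$ is (partial) tilting in $\Qcoh_H\cA$ by the definition of (partial) $(G,H)$‑tilting. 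The functor $\Hom_{\Qcoh_H\cA}(\Res^G_H\cT,-)$ is naturally identified with $\F_H:=(-)^H\circ f_\star\circ\cHom_\cA(\cT,-)\colon\Qcoh_H\cA\to\Mod\Lambda^H$ (this is literally the same formula that defines $\F$, read on $\Qcoh_H\cA$), with left adjoint $\G_H$ the corresponding composite of $-\otimes_{\cEnd_\cA(\cT)}\cT$, $f^\star$ and extension along $\Lambda^H\hookto\Lambda$; the adjunction isomorphisms $\sigma(\Lambda^H)$ and $\tau(\Res^G_H\cT)$ of condition~(i) are obtained from the usual computation $\G_H(\Lambda^H)\cong\cT$, $\F_H(\Res^G_H\cT)=\Lambda^H$ using $f_*\cO_X=R$ and the projection formula (Lemma~\ref{proj form}), and condition~(ii), the boundedness of $\bR\F_H$, from properness of $\overline f$ and compactness of $\Res^G_H\cT$. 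Theorem~\ref{general tilting} then gives that $\bL\G_H\colon\Db(\Mod\Lambda^H)\to\Db(\Qcoh_H\cA)$ is fully faithful, and an equivalence with quasi‑inverse $\bR\F_H$ when $\cT$ is $(G,H)$‑tilting.

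\emph{Step~2 (descent via conservativity).} Since $H\trianglelefteq G$ with $G/H$ reductive affine, Lemma~\ref{split mono}(3) shows the units $\id\to\Ind^G_H\circ\Res^G_H$ and $\id\to\Ind^{G/H}_{\{1\}}\circ\Res^{G/H}_{\{1\}}$ are split monomorphisms, so by Lemma~\ref{faithful general} the restriction functors $\Res^G_H\colon\Qcoh_G\cA\to\Qcoh_H\cA$ and $\Res^{G/H}_{\{1\}}\colon\Mod_{G/H}\Lambda^H\to\Mod\Lambda^H$ are faithful; being exact, they induce faithful triangulated functors on bounded derived categories, and a faithful triangulated functor is conservative. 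Note also that $\Qcoh_G\cA$ has enough injectives (Proposition~\ref{grothen prop}) and, by Lemma~\ref{finite proj resol}, $\Mod_{G/H}\Lambda^H$ has enough projectives all of finite projective dimension, so $\bL\G\dashv\bR\F$ are defined on the bounded derived categories. Because $\F$ and $\G$ are assembled from $\cHom_\cA(\cT,-)$, $f_\star$, $f^\star$, $-\otimes_{\cEnd_\cA(\cT)}\cT$, $\id_p^*$ and $(-)^H$, each of which commutes with shrinking the equivariance group, and because $\Res^G_H$, $\Res^{G/H}_{\{1\}}$ carry the resolving classes used to form the derived functors to the corresponding ones over $H$ (projective, resp.\ $\Add$‑resolutions, for $\bL\G$; for $\bR\F$ one uses that derived $H$‑invariants depend only on the $H$‑equivariant structure), one obtains natural isomorphisms
\[
\Res^{G/H}_{\{1\}}\circ\bR\F\;\cong\;\bR\F_H\circ\Res^G_H,\qquad
\Res^G_H\circ\bL\G\;\cong\;\bL\G_H\circ\Res^{G/H}_{\{1\}}
\]
compatible with the adjunctions $\bL\G\dashv\bR\F$ and $\bL\G_H\dashv\bR\F_H$.

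\emph{Step~3 (conclusion).} For part~1, apply $\Res^{G/H}_{\{1\}}$ to the unit $\id\to\bR\F\circ\bL\G$: by Step~2 it becomes the unit $\id\to\bR\F_H\circ\bL\G_H$ precomposed with $\Res^{G/H}_{\{1\}}$, an isomorphism by Step~1, so conservativity of $\Res^{G/H}_{\{1\}}$ forces $\id\to\bR\F\circ\bL\G$ to be an isomorphism and $\bL\G$ is fully faithful. For part~2, apply $\Res^G_H$ to the counit $\bL\G\circ\bR\F\to\id$ and argue identically using $\bL\G_H\circ\bR\F_H\cong\id$; conservativity of $\Res^G_H$ gives that $\bL\G$ is an equivalence. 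Finally, $\G$ preserves coherent modules and $\Lambda^H$ has finite global dimension, so $\bL\G$ maps $\Db(\fmod_{G/H}\Lambda^H)$ into $\Db(\coh_G\cA)$; properness of $\overline f$ makes $\F$, hence $\bR\F$, preserve coherence, so $\bR\F$ maps $\Db(\coh_G\cA)$ into $\Db(\fmod_{G/H}\Lambda^H)$; and since $\Db(\coh_G\cA)\hookto\Db(\Qcoh_G\cA)$ and $\Db(\fmod_{G/H}\Lambda^H)\hookto\Db(\Mod_{G/H}\Lambda^H)$ are fully faithful by local noetherianity, the statements over coherent/finitely generated modules follow.

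\emph{Main obstacle.} The delicate step is making the identifications in Step~2 precise at the level of derived functors, particularly $\Res^{G/H}_{\{1\}}\circ\bR\F\cong\bR\F_H\circ\Res^G_H$: $\bR\F$ is computed from injective resolutions in $\Qcoh_G\cA$ while $\bR\F_H$ uses $\Qcoh_H\cA$, and $\Res^G_H$ need not preserve injectives. One organizes $\bR\F$ as the composite of the derived functors $\bR(-)^H$, $\bR f_\star$ and $\bR\cHom_\cA(\cT,-)$, checks the relevant acyclicity/Grothendieck spectral sequence degenerations, and exploits that derived $H$‑invariants is insensitive to the ambient $G$‑structure. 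The other point requiring the geometric hypotheses is verifying condition~(ii) of the abstract setup in Step~1 — boundedness of $\bR\F_H$ on bounded complexes — which is exactly where properness of $\overline f$ and compactness of $\Res^G_H\cT$ are used essentially.
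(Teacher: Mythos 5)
Your proposal is essentially the same argument as the paper's. Both proofs reduce to the $H$‑level by applying Theorem~\ref{general tilting} to $T = \Res^G_H\cT$ in $\scrA = \Qcoh_H\cA$ (obtaining that $\bL\G_H$ is fully faithful, resp.\ an equivalence), and then descend to the $G$‑equivariant categories using faithfulness of the restriction functors. The only packaging difference is that you spell out the descent step via conservativity of faithful triangulated functors applied to the unit and counit, whereas the paper abstracts exactly this argument into Lemma~\ref{reduction lemma} and invokes it together with Lemma~\ref{faithful lemma} (which is itself proved precisely by Lemma~\ref{split mono}(3) plus Lemma~\ref{faithful general}, as you do). The ``main obstacle'' you flag --- rigorously establishing the natural isomorphism $\Res\circ\bR\F\cong\bR\F_H\circ\Res^G_H$ --- is also left implicit in the paper, which simply asserts commutativity of the diagram; your awareness of the issue is a point in your favor, though neither proof fully discharges it.
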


Before we prove this theorem, we prepare the following lemmas.
\begin{lem}\label{faithful lemma}
Notation is the same as above, and assume that $G/H$ is reductive.
Then the restrictions
\begin{gather*}
\Res^G_H\colon\ \Db\big(\Qcoh_G\cA\big)\to \Db\big(\Qcoh_H\cA\big),
\\[.5ex]
\Res\colon\ \Db\big(\Mod_{G/H}\Lambda^H\big)\to \Db\big(\Mod\Lambda^H\big)
\end{gather*}
are faithful functors.
\begin{proof}
We only prove $\Res^G_H$ is faithful. Since $G/H$ is affine, $\Ind^G_H\colon \Qcoh_H\cA\to \Qcoh_G\cA$ is an~exact functor, and so it extends to the functor
\begin{gather*}
\Ind^G_H\colon\ \Db\big(\Qcoh_H\cA\big)\to \Db\big(\Qcoh_G\cA\big)
\end{gather*}
which is right adjoint to $\Res^G_H$.
 Since $G/H$ is reductive, the adjunction $\mu\colon \id_{\Qcoh_G\cA} \to\Ind^G_H\Res^G_H$ is a split mono by Lemma~\ref{split mono}, and this splitting naturally extends to the splitting of the adjunction $\mu\colon \id_{\Db\left(\Qcoh_G\cA\right)} \to\Ind^G_H\Res^G_H$ since $\Ind^G_H$ and $\Res^G_H$ are exact functors. Hence the functor $\Res^G_H$ is faithful by Lemma~\ref{faithful general}.
\end{proof}
\end{lem}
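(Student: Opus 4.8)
The plan is to derive both faithfulness statements from Lemma~\ref{faithful general}: it suffices, in each case, to produce an \emph{exact} right adjoint of the restriction functor whose unit of adjunction is a split monomorphism already on bounded derived categories. I would treat $\Res^G_H$ in detail and then observe that the statement for $\Res\colon \Db(\Mod_{G/H}\Lambda^H)\to\Db(\Mod\Lambda^H)$ follows by the identical argument after replacing $G$ by $G/H$, $H$ by the trivial subgroup, and $\cA$ by the sheaf of algebras on $\Spec R$ corresponding to $\Lambda^H$ under the equivalence $\Qcoh_{G/H}(-)\cong\Mod_{G/H}(-)$ of the affine case; note $(G/H)/\{1\}=G/H$ is still reductive affine, so Lemma~\ref{split mono} applies there too.

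First I would record the exactness input. Since $G/H$ is reductive it is affine, so Lemma~\ref{split mono}(1) gives that $\Ind^G_H\colon\Qcoh_H\cA\to\Qcoh_G\cA$ is exact, and $\Res^G_H$ is always exact. Consequently both functors preserve acyclicity and quasi-isomorphisms, hence induce triangulated functors on the bounded derived categories, and the abelian-level adjunction $\Res^G_H\dashv\Ind^G_H$ descends termwise to an adjunction between $\Db(\Qcoh_G\cA)$ and $\Db(\Qcoh_H\cA)$. Next, because $H$ is normal in $G$ and $G/H$ is reductive affine, Lemma~\ref{split mono}(3) supplies a natural transformation $\nu\colon\Ind^G_H\circ\Res^G_H\to\id_{\Qcoh_G\cA}$ with $\nu\circ\mu=\id$, where $\mu$ is the unit. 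Applying the exact functors componentwise to complexes, $\mu$ and $\nu$ extend to natural transformations of the induced endofunctors of $\Db(\Qcoh_G\cA)$ with $\nu\circ\mu=\id$ preserved, so the unit of the derived adjunction is again a split mono. Lemma~\ref{faithful general}, applied with $\Res^G_H$ in the role of the left adjoint and $\Ind^G_H$ in the role of the right adjoint, then gives that $\Res^G_H$ is faithful on $\Db(\Qcoh_G\cA)$, and the same for $\Res$.

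The one step deserving attention — the ``main obstacle,'' modest as it is — is the passage of the splitting from the abelian to the derived level. This works precisely because $\Ind^G_H$ is exact: no derived-functor correction is needed, so the induced endofunctor of $\Db$ is literally $\Ind^G_H\circ\Res^G_H$ applied componentwise, and $\mu$, $\nu$ pass to $\Db$ as honest natural transformations with their composite still the identity. If $\Ind^G_H$ were only left exact one would have to work with $\bR\Ind^G_H$ and the compatibility of the splitting with the derived unit would become genuinely delicate; the reductivity (hence affineness) of $G/H$ is exactly what removes that difficulty, and it is also exactly the hypothesis Lemma~\ref{split mono}(3) needs for the splitting to exist in the first place.
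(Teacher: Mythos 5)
Your proposal is correct and follows essentially the same route as the paper: exactness of both $\Res^G_H$ and $\Ind^G_H$ (from affineness of $G/H$), the induced adjunction on bounded derived categories, the split mono from Lemma~\ref{split mono}(3), and the final appeal to Lemma~\ref{faithful general}. The only difference is that you spell out more explicitly why exactness lets the splitting pass to $\Db$, which is the same point the paper makes in passing.
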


\begin{lem}\label{reduction lemma}
For each $i=1,2$, let $F_i\colon \scrC_i\to \scrD_i$ and $G_i\colon \scrD_i\to \scrC_i$ be exact functors between triangulated categories such that $G_i\dashv F_i$. Denote by $\sigma_i\colon G_i\circ F_i\to \id$ and $\tau_i\colon \id\to F_i\circ G_i$ the adjunction morphisms, and let $P\colon \scrC_1\to \scrC_2$ and $Q\colon \scrD_1\to\scrD_2$ be faithful exact functors. Assume that we have functor isomorphisms $\varphi\colon Q\circ F_1\simto F_2\circ P$ and $\psi\colon P\circ G_1\simto G_2\circ Q$ and that the following diagrams are commutative
\[
\begin{tikzcd}
PG_1F_1\arrow[rr, "\sim"]\arrow[rd, "P\sigma_1"']&&G_2F_2P\arrow[ld, "\sigma_2P"]&&
QF_1G_1\arrow[rr, "\sim"]&&F_2G_2Q\\
&P&&&&Q\arrow[lu, "Q\tau_1"]\arrow[ru, "\tau_2Q"']&
\end{tikzcd}
\]
where the top horizontal arrows are the isomorphisms induced by $\varphi$ and $\psi$.
\begin{itemize}\itemsep=0pt
\item[$1.$] If $F_2$ $($resp.\ $G_2)$ is fully faithful, so is $F_1$ $($resp.\ $G_1)$.
\item[$2.$] If $F_2$ $($resp.\ $G_2)$ is an equivalence, so is $F_1$ $($resp.\ $G_1)$.
\end{itemize}
\begin{proof}
(2) follows from (1). We only prove (1) for $F_i$, since the statement for $G_i$ follows by a~simi\-lar argument. For an object $C\in \scrC_1$ consider the following triangle
\begin{gather*}
 G_1F_1(C)\xrightarrow{\sigma_1(C)} C\to{\rm Cone}(\sigma_1(C))\to G_1F_1(C)[1].
\end{gather*}
It suffices to prove that ${\rm Cone}(\tau_1(C))$ is the zero object. By the assumption, $P({\rm Cone}(\sigma_1(C)))$ is isomorphic to the cone of the morphism $\sigma_2(P(C))$, which is the zero object since $F_2$ is fully faithful. Hence ${\rm Cone}(\sigma_1(C))$ is also the zero object since $P$ is faithful.
\end{proof}
\end{lem}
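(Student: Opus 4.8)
The plan is to deduce everything from part~(1) applied to the functors $F_i$. Part~(1) for the $G_i$ is the formal dual: one exchanges the roles of the counits $\sigma_i$ and the units $\tau_i$, uses the second commutative triangle in place of the first, and lets $Q$ play the role of $P$. Part~(2) is then automatic: an equivalence is in particular fully faithful, and a left adjoint of an equivalence is again an equivalence (a quasi-inverse), so if $F_2$ is an equivalence then both $F_2$ and $G_2$ are fully faithful; by~(1) the counit $\sigma_1$ and the unit $\tau_1$ are then both isomorphisms, which is exactly the statement that $G_1\dashv F_1$ is an adjoint equivalence.

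For~(1) applied to $F_1$, recall that, since $G_1$ is left adjoint to $F_1$, the functor $F_1$ is fully faithful if and only if the counit $\sigma_1(C)\colon G_1F_1(C)\to C$ is an isomorphism for every $C\in\scrC_1$. Fix such a $C$ and complete $\sigma_1(C)$ to an exact triangle
\[
G_1F_1(C)\xrightarrow{\ \sigma_1(C)\ } C\longrightarrow \operatorname{Cone}(\sigma_1(C))\longrightarrow G_1F_1(C)[1].
\]
Applying the exact functor $P$ and invoking the functor isomorphisms $\varphi$, $\psi$ together with the first commutative triangle of the hypothesis, one identifies $P(\sigma_1(C))$ with the counit $\sigma_2(P(C))\colon G_2F_2(P(C))\to P(C)$, so that $P(\operatorname{Cone}(\sigma_1(C)))\cong\operatorname{Cone}(\sigma_2(P(C)))$. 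If $F_2$ is fully faithful this cone vanishes, hence $P(\operatorname{Cone}(\sigma_1(C)))\cong0$. A faithful functor reflects the zero object — if $P(X)\cong0$ then $\id_X\mapsto0$, so $\id_X=0$ and $X\cong0$ — so $\operatorname{Cone}(\sigma_1(C))\cong0$ and $\sigma_1(C)$ is an isomorphism. Thus $F_1$ is fully faithful. The argument for $G_1$ is the same with $\sigma_1(C)$ replaced by the unit $\tau_1(D)\colon D\to F_1G_1(D)$, $P$ replaced by $Q$, and the first commutative triangle replaced by the second.

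The one delicate point is the identification of $P(\sigma_1(C))$ with $\sigma_2(P(C))$: pasting $\psi$ and $\varphi$ yields an isomorphism $PG_1F_1\simto G_2F_2P$, and the first commutative triangle says precisely that this isomorphism intertwines the transformations $P\sigma_1$ and $\sigma_2P$ (viewed as morphisms with target $P$); consequently $P(\sigma_1(C))$ and $\sigma_2(P(C))$ sit in a commutative square whose vertical maps are isomorphisms, so their mapping cones are isomorphic. Granting this bookkeeping, everything else — exactness of $P$ and $Q$ taking the cone of a morphism to the cone of its image, and a faithful functor reflecting the vanishing of an object, hence (applied to mapping cones) reflecting isomorphisms — is routine.
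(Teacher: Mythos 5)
Your proof is correct and follows essentially the same route as the paper's: form the triangle on the counit $\sigma_1(C)$, push it through the faithful exact functor $P$, identify $P\sigma_1(C)$ with $\sigma_2(P(C))$ via the pasted isomorphism $PG_1F_1\simto G_2F_2P$ and the first commutative triangle, and conclude from the vanishing of $\operatorname{Cone}(\sigma_2(P(C)))$ and the faithfulness of $P$ that $\operatorname{Cone}(\sigma_1(C))\cong 0$. The only differences are cosmetic: you spell out the standard facts the paper leaves implicit (right adjoint fully faithful iff counit iso, faithful functors reflect zero objects, the quasi-inverse argument behind ``(2) follows from (1)''), and you correctly write $\operatorname{Cone}(\sigma_1(C))$ where the paper has the typo $\operatorname{Cone}(\tau_1(C))$.
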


\noindent
{\bf Proof of Theorem~\ref{derived equiv}.}
We prove (1) and (2) simultaneously. We define a functor
\begin{gather*}
\F_H\colon\ \Qcoh_H\cA\to \Mod\Lambda^H
\end{gather*}
to be the composition
\[
\begin{tikzcd}
\Qcoh_H\cA\arrow[rr,"\cHom_{\cA}(\cT\mbox{\hspace{-0.5mm}\tiny ,}-)"]&&\Qcoh_H\cEnd_{\cA}(\cT)\arrow[r,"f_{\star}"]&\Mod_H\Lambda\arrow[r,"(-)^H"]&\Mod\Lambda^H.
\end{tikzcd}
\]
If we set $\cT_H\defeq\Res^G_H\big(\cT,\uptheta^{\cT}\big)\in \Qcoh_H\cA$, since $\Lambda^H\cong \End_{\cA}(\cT)^H=\End_{\Qcoh_H\cA}(\cT_H)$, the functor $\F_H$ is isomorphic to the functor
\begin{gather*}
\Hom_{\Qcoh_H\cA}(\cT_H,-)\colon\ \Qcoh_H\cA\to \Mod\Lambda^H.
\end{gather*}
Note that $\F_H$ has a left adjoint functor $\G_H\colon \Mod\Lambda^H\to \Qcoh_H\cA$ which is given by
\begin{gather*}
\G_H(-)\defeq f^{\star}\big(\id_p^*(-)\otimes_{\Lambda^H}\Lambda\big)\otimes_{\cEnd(\cT)}\cT\cong f^{-1}\big(\id_p^*(-)\otimes_{\Lambda^H}\Lambda\big)\otimes_{f^{-1}\Lambda}\cT,
\end{gather*}
where $p\colon H\to H/H=\{1\}$ is the natural projection. By construction, we have natural isomorphisms $\F_H(\cT_H)\cong \Lambda^H$ and $\G_H\big(\Lambda^H\big)\cong \cT_H$, and this implies that the adjunction morphisms
\begin{gather*}
\Lambda^H\to \F_H\G_H\big(\Lambda^H\big) \qquad\text{and}\qquad \G_H\F_H(\cT_H)\to \cT_H
\end{gather*}
are isomorphisms.
Since $\cT_H$ is a partial tilting object in $\Qcoh_H\cA$, by Theorem~\ref{general tilting}, the functor
\begin{gather*}
\bL\G_H\colon\ \Db\big(\Mod\Lambda^H\big)\to \Db\big(\Qcoh_H\cA\big)
\end{gather*}
is fully faithful, and if $\cT_H$ is tilting, it is an equivalence.
Now we have the following commutative diagram:
\[
\begin{tikzcd}
\Db\big(\Mod_{G/H}\Lambda^H\big)\arrow[rr,"\bL\G"]\arrow[d, "\Res"']&&\Db\big(\Qcoh_G\cA\big)\arrow[d, "\Res^G_H"]\arrow[rr,"\bR\F"]&&\Db\big(\Mod_{G/H}\Lambda^H\big)\arrow[d, "\Res"']\\
\Db\big(\Mod\Lambda^H\big)\arrow[rr, "\bL \G_H"]&&\Db\big(\Qcoh_H\cA\big)\arrow[rr, "\bR \F_H"]&&\Db\big(\Mod\Lambda^H\big).
\end{tikzcd}
\]
Hence (1) and (2) follows from Lemmas~\ref{faithful lemma} and~\ref{reduction lemma}.\qed

 The following is a special case of Theorem~\ref{derived equiv}, which is an equivariant version of derived equivalences induced by tilting bundles and tilting modules.

 \begin{cor} Notation is the same as above. Assume that $G$ is reductive.
 \begin{itemize}\itemsep=0pt
 \item[$1.$] Let $\cT$ be a $G$-equivariant vector bundle on $X$ and set $\Lambda\defeq\End_X(\cT)$. If $\cT$ is $G$-tilting and $\Lambda$ is of finite global dimension, we have an equivalence
 \begin{gather*}
 \Db\big(\coh_GX\big)\simto \Db\big(\fmod_G\Lambda\big).
 \end{gather*}
 \item[$2.$] Let $\Gamma$ be a $G$-equivariant $R$-algebra, and $T$ a $G$-equivariant $\Gamma$-module. Set $\Lambda\defeq\End_{\Gamma}(T)$. If $T$ is $G$-tilting and $\Lambda$ is of finite global dimension, we have an equivalence
 \begin{gather*}
 \Db\big(\fmod_G\Gamma\big)\simto \Db\big(\fmod_G\Lambda\big).
 \end{gather*}
 \end{itemize}
\end{cor}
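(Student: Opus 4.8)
The plan is to deduce both statements directly from Theorem~\ref{derived equiv} by specializing the closed normal subgroup $H$ to the trivial group $\{1\}$. In that case $G/H=G$, which is reductive by hypothesis; the invariant-section functor $(-)^{H}$ is the identity, so $\Lambda^{H}=\Lambda$; and the assumption that $\Lambda$ has finite global dimension implies that every module in $\Mod\Lambda$ has finite projective dimension, which is exactly the standing hypothesis of Theorem~\ref{derived equiv}. Moreover, by definition a $G$-tilting module is precisely a $(G,\{1\})$-tilting module, i.e.\ $\Res(\cT)$ (resp.\ $\Res(T)$) is a tilting object in the relevant non-equivariant category. Since Theorem~\ref{derived equiv}(2) produces an equivalence $\bL\G\colon\Db(\fmod_{G/H}\Lambda^{H})\simto\Db(\coh_G\cA)$, the equivalences asserted in the Corollary will be obtained as its quasi-inverse $\bR\F$; it remains only to choose the ambient data $(X,\cA,f)$ appropriately in each case.

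For part~(1) I would apply the theorem with $\cA=\cO_X$, so that $\coh_G\cA=\coh_G X$ and $\Qcoh\cA=\Qcoh X$, and with $H=\{1\}$, so that $[X/H]=X$ and the properness hypothesis on $\overline{f}\colon[X/H]\to\Spec R$ is just the properness of $f$, which is part of the standing setup. The endomorphism algebra $\Lambda=f_*\cEnd_X(\cT)$ is a coherent $G$-equivariant $R$-algebra because $\cT$ is a vector bundle and $f$ is proper, and $\cT$ being $G$-tilting says exactly that $\Res(\cT)$ is a tilting object in $\Qcoh X$. Then Theorem~\ref{derived equiv}(2) yields $\Db(\fmod_G\Lambda)\simto\Db(\coh_G X)$, and passing to the quasi-inverse gives the stated equivalence $\Db(\coh_G X)\simto\Db(\fmod_G\Lambda)$.

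For part~(2) I would take $X=\Spec R$ and $f=\id_{\Spec R}$, so that $\overline f=\id$ is proper and $f_*\cO_X=R$, and let $\cA$ be the (coherent) $G$-equivariant $\Spec R$-algebra corresponding to the $G$-equivariant $R$-algebra $\Gamma$ under the equivalence between $G$-equivariant $R$-algebras and $G$-equivariant $\Spec R$-algebras; then $\coh_G\cA\cong\fmod_G\Gamma$ and $\Lambda=\End_\cA(\cT)=\End_\Gamma(T)$. Again $H=\{1\}$, $\Lambda^{H}=\Lambda$ has finite global dimension, and $T$ being $G$-tilting means $\Res(T)$ is a tilting object in $\fmod\Gamma$; so Theorem~\ref{derived equiv}(2) gives $\Db(\fmod_G\Lambda)\simto\Db(\fmod_G\Gamma)$, whose quasi-inverse is the claimed equivalence.

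There is no genuine obstacle here beyond bookkeeping: one must only unwind the definition of $(G,\{1\})$-tilting to the classical notion of an equivariant tilting object, note that finite global dimension supplies the finite-projective-dimension hypothesis, and check that in each case the chosen $(X,\cA,f)$ satisfies the assumptions preceding Theorem~\ref{derived equiv} (namely $R$ normal of finite type over $k$, $X$ separated of finite type, and $f$ proper with $f_*\cO_X=R$). The restriction to coherent/finitely generated modules is automatic since, as recorded just before the theorem, properness of $\overline f$ makes $\F$ preserve coherent modules while $\G$ preserves them by construction.
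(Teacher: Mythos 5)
Your proof is correct and is essentially the proof the paper intends: the paper offers no explicit argument, merely labelling the Corollary ``a special case of Theorem~\ref{derived equiv}'', and your specializations $H=\{1\}$, $\cA=\cO_X$ (for part~(1)) and $X=\Spec R$, $f=\id$ (for part~(2)) together with the observation that finite global dimension of $\Lambda$ supplies the finite-projective-dimension hypothesis, and that passing to the quasi-inverse $\bR\F$ reverses the direction of the equivalence, fill in exactly those checks. One small imprecision worth noting: $G$-tilting means $\Res(T)$ is a tilting object in $\Qcoh\cA$ (i.e.\ in $\Mod\Gamma$), not in $\fmod\Gamma$, since the tilting definition is formulated at the level of the Grothendieck category $\Qcoh_H\cA$; this does not affect the argument but the wording in your part~(2) should be adjusted accordingly.
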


\section{Equivariant tilting objects and factorizations}

In this section, we prove that equivariant tilting modules induce equivalences of derived factorization categories.

\subsection{Tilting objects and factorizations}\label{general morita}
In this subsection, we prove that a tilting object in a Gorthendieck category induces an equivalence of derived factorization categories. To prove this, we need the following lemmas:

Let $\scrA$ be a Grothendieck category and $(\scrE,\Phi,w)$ a triple as in \eqref{triple}.
\begin{lem}\label{qis}
Assume that we have $\Ext^i_{\scrA}(A,B)=0$ for arbitrary objects $A,B\in \scrE$ and all $i>0$. Then for objects $E,F\in \Fact(\scrE,\Phi,w)$ and an injective resolution $\varepsilon\colon F\to J^{\bullet}$ in $\Ch(\Fact(\scrA,\Phi,w))$, the map
\begin{gather*}
\varepsilon_*\colon\ \Hom_{\dgFact(\scrA,\Phi,w)}(E,F)^p\to\Hom_{\dgFact(\scrA,\Phi,w)}(E,J^{\bullet})^p
\end{gather*}
of cochain complexes of abelian groups is a quasi-isomorphism for every $p\in \bZ$. Here injective resolution means that $\varepsilon$ is a quasi-isomorphism given by an injection $\varepsilon_0\colon F\hookto J^0$, and all components of the factorizations $J^n$ are injective objects in $\scrA$.

\begin{proof}
Since $\Hom_{\dgFact(\scrA,\Phi,w)}(E,F)^p\cong \Hom_{\dgFact(\scrA,\Phi,w)}(E[p],F)^0$, we may assume that $p=0$. If~$\varepsilon\colon F\to J^{\bullet}$ is an injective resolution, then we have the induced injective resolutions $\varepsilon_i\colon F_i\to J^{\bullet}_i$ for $i=0,1$.
Then the map $\varepsilon_*\colon \Hom_{\dgFact(\scrA,\Phi,w)}(E,F)^0\to\Hom_{\dgFact(\scrA,\Phi,w)}(E,J^{\bullet})^0$ of cochain complexes is the direct sum of two maps
\begin{gather*}
(\varepsilon_*)_i\colon\ \Hom_{\scrA}(E_i,F_i)\to \Hom_{\scrA}(E_i,J^{\bullet}_i)\qquad (i=0,1)
\end{gather*}
of cochain complexes. The $i$-th cohomology of the cochain complex $\Hom_{\scrA}(E_i,J^{\bullet}_i)$ is nothing but $\Ext^i_{\scrA}(E_i,F_i)$. Hence, by the assumption, the map $(\varepsilon_*)_i$ is a quasi-isomorphism, and so is~$\varepsilon_*$.
\end{proof}
\end{lem}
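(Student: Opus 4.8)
The plan is to reduce the claim to the standard computation of $\Ext$-groups by injective resolutions in $\scrA$. First I would use the shift isomorphism $\Hom_{\dgFact(\scrA,\Phi,w)}(E,F)^p\cong\Hom_{\dgFact(\scrA,\Phi,w)}(E[p],F)^0$, valid because $E[p]$ is again a factorization all of whose components lie in $\scrE$ (as $\Phi$ preserves $\scrE$); this reduces matters to the case $p=0$. Then I would unwind the definition of the degree-zero part of the dg-Hom complex: for any factorization $G\in\Fact(\scrA,\Phi,w)$ there is a natural identification $\Hom_{\dgFact(\scrA,\Phi,w)}(E,G)^0=\Hom_\scrA(E_1,G_1)\oplus\Hom_\scrA(E_0,G_0)$, functorial in $G$. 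Applying this to the complex $J^\bullet$ and noting that each differential $\delta^m\colon J^m\to J^{m+1}$ acts componentwise, as a pair of morphisms $J^m_i\to J^{m+1}_i$ in $\scrA$, I obtain a decomposition of cochain complexes $\Hom_{\dgFact(\scrA,\Phi,w)}(E,J^\bullet)^0=\Hom_\scrA(E_1,J^\bullet_1)\oplus\Hom_\scrA(E_0,J^\bullet_0)$, under which $\varepsilon_*$ becomes the direct sum of the two maps $(\varepsilon_i)_*\colon\Hom_\scrA(E_i,F_i)\to\Hom_\scrA(E_i,J^\bullet_i)$ induced by the components $\varepsilon_i\colon F_i\to J^0_i$ of $\varepsilon$.

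Next I would verify that each $\varepsilon_i\colon F_i\to J^\bullet_i$ is an honest injective resolution in $\scrA$: the terms $J^n_i$ are injective by hypothesis, $F_i\hookto J^0_i$ is a monomorphism since $\varepsilon\colon F\hookto J^0$ is one in $\Fact(\scrA,\Phi,w)$, and $\varepsilon_i$ is a quasi-isomorphism because the functors $\Fact(\scrA,\Phi,w)\to\scrA$ sending a factorization to its $i$-th component are exact, so that cohomology of a complex of factorizations is computed componentwise. Consequently $H^m\bigl(\Hom_\scrA(E_i,J^\bullet_i)\bigr)\cong\Ext^m_\scrA(E_i,F_i)$; since $E_i,F_i\in\scrE$, the standing hypothesis forces this group to vanish for $m>0$, while for $m=0$ it is $\Hom_\scrA(E_i,F_i)$. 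Hence each $(\varepsilon_i)_*$ is a quasi-isomorphism, and so is their direct sum $\varepsilon_*$.

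The argument is essentially formal; the only substantive input is the $\Ext$-vanishing hypothesis, fed into the usual resolution computation of $\Ext$. The two points that warrant a word of care are that the induced complexes $J^\bullet_i$ really are injective resolutions and that the degree-zero dg-Hom splits off componentwise compatibly with the differentials of $J^\bullet$; both follow from exactness of the forgetful-to-component functors on $\Fact(\scrA,\Phi,w)$, and I expect neither to present any difficulty.
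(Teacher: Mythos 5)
Your proof is correct and follows essentially the same route as the paper's: reduce to $p=0$ via the shift, split the degree-zero dg-Hom cochain complex componentwise as $\Hom_\scrA(E_1,J^\bullet_1)\oplus\Hom_\scrA(E_0,J^\bullet_0)$, observe that each $\varepsilon_i\colon F_i\to J^\bullet_i$ is an injective resolution in $\scrA$, and invoke the $\Ext$-vanishing hypothesis. Your additional remarks about the exactness of the component functors and the compatibility of the splitting with the differentials of $J^\bullet$ are sound and only make explicit what the paper leaves tacit.
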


\begin{lem}\label{add ff}
Assume that we have $\Ext^i_{\scrA}(A,B)=0$ for arbitrary objects $A,B\in \scrE$ and all $i>0$. Then the natural functor $\K(\scrE,\Phi,w)\to \Dco(\scrA,\Phi,w)$ is fully faithful.
\begin{proof}
Let $E,F\in \Fact(\scrE,\Phi,w)$ be objects. We need to show that the map
\begin{gather*}
Q\colon\Hom_{{\rm K}(\scrA,\Phi,w)}(E,F)\to \Hom_{\Dco(\scrA,\Phi,w)}(E,F)
\end{gather*}
 defined by the Verdier quotient $Q\colon{\rm K}(\scrA,\Phi,w)\to\Dco(\scrA,\Phi,w)$ is an isomorphism. By~\cite[Proposition~2.19]{bdfik} we can take an injective resolution $\iota\colon F\to (I^{\bullet},d_{I}^{\bullet})=\big(\cdots\to0\to I^0\xrightarrow{d_{I}^0} I^1\xrightarrow{d_{I}^1} \cdots\big)$ of $F$ in $\Ch(\Fact(\scrA,\Phi,w))$ in the sense that all $I^i$ has injective components. Then, we have the induced map
 \begin{gather*}
 \Tot(\iota)\colon\ F\to\Tot(I^{\bullet})
 \end{gather*}
 in $\Fact(\scrA,\Phi,w)$ which becomes an isomorphism in $\Dco(\scrA,\Phi,w)$. Consider the following commutative diagram:
\[
\begin{tikzcd}
\Hom_{{\rm K}(\scrA,\Phi,w)}(E,F)\arrow[rr, "Q"]\arrow[d, "\Tot(\iota)_*"']&&\Hom_{\Dco(\scrA,\Phi,w)}(E,F)\arrow[d, "\Tot(\iota)_*"]\\
\Hom_{{\rm K}(\scrA,\Phi,w)}(E,\Tot(I^{\bullet}))\arrow[rr, "Q"]&&\Hom_{\Dco(\scrA,\Phi,w)}(E,\Tot(I^{\bullet})).
\end{tikzcd}
\]
In the above diagram, the vertical arrow on the right hand side is an isomorphism, and the horizontal arrow on the bottom is also an isomorphism by~\cite[Lemma~2.24]{bdfik} and~\cite[Proposition~B.2(I)]{ls} (see also~\cite[Remark 2.14]{ls}). Hence it suffices to show that the vertical arrow on the left hand side is an isomorphism, and for this we prove that the map
\begin{equation}\label{hom cpx}
\Tot(\iota)_*\colon\ \Hom_{\dgFact(\scrA,\Phi,w)}(E,F) \to\Hom_{\dgFact(\scrA,\Phi,w)}(E,\Tot(I^{\bullet}))
\end{equation}
of cochain complexes is a quasi-isomorphism. The complex $\Hom_{\dgFact(\scrA,\Phi,w)}(E,F)$ can be seen as the double complex $X^{\bullet,\bullet}$ defined by
\begin{equation*}
X^{p,q}\defeq\begin{cases}
\Hom_{\dgFact(\scrA,\Phi,w)}(E,F)^p, & q=0, \\
0,& q\neq 0.
\end{cases}
\end{equation*}
On the other hand, we can consider the double complex $Y^{\bullet,\bullet}$ defined by
\begin{equation*}
Y^{p,q}\defeq\Hom_{\dgFact(\scrA,\Phi,w)}(E,I^q)^p,
\end{equation*}
 where the differentials $d_Y^{p,\bullet}\colon Y^{p,\bullet}\to Y^{p,\bullet+1}$ and $d_Y^{\bullet,q}\colon Y^{\bullet,q}\to Y^{\bullet+1,q}$ are given by $d_Y^{p,\bullet}\defeq (d_I^{\bullet})_*$ and $d_Y^{\bullet,q}\defeq d_{(E,I^q)}^{\bullet}$, where $d_{(E,I^q)}^{\bullet}$ is the differential of the complex $\Hom_{\dgFact(\scrA,\Phi,w)}(E,I^{q})^{\bullet}$. Then the injective map $\iota\colon F\to I^0$ defining the resolution $\iota\colon F\to I^{\bullet}$ gives the morphism
 \begin{equation}\label{hom cpx 2}
 \iota_*\colon\ X^{\bullet,\bullet}\to Y^{\bullet,\bullet}
 \end{equation}
 of double complexes. By definition, we have isomorphisms $\Tot(X^{\bullet,\bullet})=\Hom_{\dgFact(\scrA,\Phi,w)}(E,F)$ and $\Tot(Y^{\bullet,\bullet})\cong\Hom_{\dgFact(\scrA,\Phi,w)}(E,\Tot(I^{\bullet}))$, and the map $\Tot(\iota)_*$ in (\ref{hom cpx}) is the totalization $\Tot(\iota_*)$ of the map $\iota_*$ in (\ref{hom cpx 2}).
By Lemma~\ref{qis} the map $\iota_*\colon X^{p,\bullet}\to Y^{p,\bullet}$ of cochain complexes is a quasi-isomorphism for any $p\in \bZ$, and therefore the map in (\ref{hom cpx}) is also a quasi-isomorphism by~\cite[Theorem~1.9.3]{ks} (see also~\cite[Lemma~2.46]{ls}). This completes the proof.
\end{proof}
\end{lem}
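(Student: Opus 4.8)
The statement to prove is that for any $E,F\in\Fact(\scrE,\Phi,w)$ the canonical map $\Hom_{\K(\scrE,\Phi,w)}(E,F)\to\Hom_{\Dco(\scrA,\Phi,w)}(E,F)$ is bijective. Since $\scrE\subseteq\scrA$ is a full exact subcategory, the source agrees with $\Hom_{\K(\scrA,\Phi,w)}(E,F)$, so it suffices to show that the Verdier quotient map $Q\colon\Hom_{\K(\scrA,\Phi,w)}(E,F)\to\Hom_{\Dco(\scrA,\Phi,w)}(E,F)$ is an isomorphism. The plan is to replace $F$ by an injective model. Using \cite[Proposition~2.19]{bdfik} I would choose an injective resolution $\iota\colon F\to I^\bullet$ in $\Ch(\Fact(\scrA,\Phi,w))$ --- a bounded-below complex $0\to I^0\to I^1\to\cdots$ whose factorizations $I^q$ all have injective components --- and form the totalization $\Tot(\iota)\colon F\to\Tot(I^\bullet)$, which becomes an isomorphism in $\Dco(\scrA,\Phi,w)$.

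Next I would consider the commutative square
\[
\begin{tikzcd}
\Hom_{\K(\scrA,\Phi,w)}(E,F)\arrow[r,"Q"]\arrow[d,"\Tot(\iota)_*"']& \Hom_{\Dco(\scrA,\Phi,w)}(E,F)\arrow[d,"\Tot(\iota)_*"]\\
\Hom_{\K(\scrA,\Phi,w)}(E,\Tot(I^\bullet))\arrow[r,"Q"]& \Hom_{\Dco(\scrA,\Phi,w)}(E,\Tot(I^\bullet)).
\end{tikzcd}
\]
The right-hand vertical arrow is bijective because $\Tot(\iota)$ is invertible in $\Dco(\scrA,\Phi,w)$, and the bottom $Q$ is bijective because $\Tot(I^\bullet)$ has injective components, so that \cite[Lemma~2.24]{bdfik} combined with \cite[Proposition~B.2]{ls} applies. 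Hence the lemma reduces to the bijectivity of the left-hand vertical arrow, and for that it is enough to show that the induced map of $\dgFact$-Hom complexes
\[
\Tot(\iota)_*\colon\ \Hom_{\dgFact(\scrA,\Phi,w)}(E,F)^\bullet\longrightarrow\Hom_{\dgFact(\scrA,\Phi,w)}\big(E,\Tot(I^\bullet)\big)^\bullet
\]
is a quasi-isomorphism, since $\Hom_{\K(\scrA,\Phi,w)}(E,-)=H^0\Hom_{\dgFact(\scrA,\Phi,w)}(E,-)$.

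To prove this I would pass to double complexes. Put $Y^{p,q}\defeq\Hom_{\dgFact(\scrA,\Phi,w)}(E,I^q)^p$, with one differential induced by $d_I$ and the other the $\dgFact$-Hom differential, and let $X^{\bullet,\bullet}$ be the double complex supported in the column $q=0$ with $X^{p,0}\defeq\Hom_{\dgFact(\scrA,\Phi,w)}(E,F)^p$. Unwinding the definitions of $\Tot$ and of the $\dgFact$-Hom complex, one identifies $\Tot(X^{\bullet,\bullet})$ with $\Hom_{\dgFact(\scrA,\Phi,w)}(E,F)^\bullet$, $\Tot(Y^{\bullet,\bullet})$ with $\Hom_{\dgFact(\scrA,\Phi,w)}(E,\Tot(I^\bullet))^\bullet$, and $\Tot(\iota)_*$ with the totalization of the evident map $\iota_*\colon X^{\bullet,\bullet}\to Y^{\bullet,\bullet}$ induced by $\iota\colon F\to I^0$. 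Now the hypothesis $\Ext^i_\scrA(A,B)=0$ for all $A,B\in\scrE$ and $i>0$ is exactly the input to Lemma~\ref{qis}, which says that for every fixed $p$ the column map $\iota_*\colon X^{p,\bullet}\to Y^{p,\bullet}$ is a quasi-isomorphism. Since $I^\bullet$ is bounded below, $Y^{\bullet,\bullet}$ is concentrated in the half-plane $q\ge 0$, so a columnwise quasi-isomorphism between $X$ and $Y$ totalizes to a quasi-isomorphism by \cite[Theorem~1.9.3]{ks} (equivalently \cite[Lemma~2.46]{ls}); this gives the required quasi-isomorphism and finishes the argument.

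The formal part --- the reduction through the commutative square and the Verdier quotient --- is handled by the cited results, so I expect the real work to be concentrated in the last paragraph: carefully matching $\Tot(Y^{\bullet,\bullet})$ with $\Hom_{\dgFact(\scrA,\Phi,w)}(E,\Tot(I^\bullet))^\bullet$, where one must track how the $\Phi$-twists built into the $\dgFact$-Hom complex interact with the grading shifts generated by totalizing the unbounded-above complex $I^\bullet$, and then applying the double-complex comparison theorem with the correct convergence hypothesis, for which the bounded-below shape of the resolution is precisely what is needed.
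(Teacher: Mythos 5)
Your argument is correct and follows essentially the same route as the paper's proof: the same injective resolution from \cite[Proposition~2.19]{bdfik}, the same reduction via the commutative square using \cite[Lemma~2.24]{bdfik} and \cite[Proposition~B.2]{ls}, and the same double-complex comparison with Lemma~\ref{qis} supplying the columnwise quasi-isomorphisms and \cite[Theorem~1.9.3]{ks} the convergence. No gaps to report.
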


\begin{lem}\label{canonical truncation}
Let $F^{\bullet}\in \D^+(\scrA)$ be an object, and let $m\in \bZ$ be an integer such that $F^k=0$ for any $k<m$. Then there are a family $\{F^{\bullet}_i\}_{i\in \bZ}$ of objects in $\Db(\scrA)$ with $F^{k}_i=0$ for any $k<m$ and $i$ and an exact triangle
\begin{gather*}
\bigoplus_{i\in \bZ}F^{\bullet}_i\to \bigoplus_{i\in \bZ}F^{\bullet}_i\to F^{\bullet}\to \bigoplus_{i\in \bZ}F^{\bullet}_i[1].
\end{gather*}
\begin{proof}
Denote by $\tau_{\leq i}(F^{\bullet})$ the canonical truncation of $F^{\bullet}$ defined by $\tau_{\leq i}(F^{\bullet})^j=F^j$ if $j<i$, $\tau_{\leq i}(F^{\bullet})^i=\Ker(d^i\colon F^i\to F^{i+1})$ and $\tau_{\leq i}(F^{\bullet})^j=0$ if $j>i$. If we set $F_i^{\bullet}\defeq \tau_{\leq i}(F^{\bullet})$, $F_i^{\bullet}$ lies in~$\Db(\scrA)$ and $F^{k}_i=0$ for any $k<m$ and $i$. The object $F^{\bullet}$ is the cokernel of an injective morphism
\begin{gather*}
\bigoplus_{i\in \bZ} F^{\bullet}_i\to \bigoplus_{i\in \bZ} F^{\bullet}_i
\end{gather*}
in the abelian category $\Ch(\scrA)$ given by
 $\id \oplus (-\iota_i)\colon F^{\bullet}_i\to F^{\bullet}_i\oplus F^{\bullet}_{i+1}$, where $\iota_i\colon F^{\bullet}_i\to F^{\bullet}_{i+1}$ is a~natural injection. Hence we have a triangle
\begin{gather*}
\bigoplus_{i\in \bZ}F^{\bullet}_i\to \bigoplus_{i\in \bZ}F^{\bullet}_i\to F^{\bullet}\to \bigoplus_{i\in \bZ}F^{\bullet}_i[1]
\end{gather*}
in $\D^+(\scrA)$.
\end{proof}
\end{lem}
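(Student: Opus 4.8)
The plan is to exhibit $F^{\bullet}$ as a mapping telescope (a sequential homotopy colimit) of its canonical truncations, and then to present that telescope by an honest short exact sequence of complexes. Concretely, for each $i\in\bZ$ I would set $F^{\bullet}_i\defeq\tau_{\le i}(F^{\bullet})$, where $\tau_{\le i}$ is the good truncation: $F^j_i=F^j$ for $j<i$, $F^i_i=\Ker(d^i)$, and $F^j_i=0$ for $j>i$. Since $F^{\bullet}$ is supported in cohomological degrees $\ge m$, the complex $F^{\bullet}_i$ is concentrated in degrees $[m,i]$, so $F^{\bullet}_i\in\Db(\scrA)$, one has $F^k_i=0$ for all $k<m$, and $F^{\bullet}_i=0$ whenever $i<m$; moreover $H^j(F^{\bullet}_i)=H^j(F^{\bullet})$ for $j\le i$ and vanishes otherwise. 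There are evident monomorphisms of complexes $\iota_i\colon F^{\bullet}_i\hookto F^{\bullet}_{i+1}$ (the inclusion $\Ker(d^i)\hookto F^i$ in degree $i$, the zero map into $\Ker(d^{i+1})$ in degree $i+1$, the identity in all other degrees), and the directed colimit of the system $(F^{\bullet}_i,\iota_i)$ in $\Ch(\scrA)$ is $F^{\bullet}$ itself, because in each fixed degree $j$ one has $F^j_i=F^j$ for all $i>j$.

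Next I would form the chain map
\[
\delta\colon\ \bigoplus_{i\in\bZ}F^{\bullet}_i\longrightarrow\bigoplus_{i\in\bZ}F^{\bullet}_i
\]
which on the $i$-th summand is $\id_{F^{\bullet}_i}$ (landing in the $i$-th summand) minus $\iota_i$ (landing in the $(i+1)$-st summand). Since $\scrA$ is a Grothendieck category, arbitrary coproducts exist and are exact, $\Ch(\scrA)$ inherits these properties, and the standard mapping-telescope argument shows that $\delta$ is a monomorphism whose cokernel is the directed colimit computed above, namely $F^{\bullet}$. This yields a short exact sequence
\[
0\longrightarrow\bigoplus_{i\in\bZ}F^{\bullet}_i\xrightarrow{\ \delta\ }\bigoplus_{i\in\bZ}F^{\bullet}_i\longrightarrow F^{\bullet}\longrightarrow0
\]
in $\Ch(\scrA)$, all of whose terms are supported in degrees $\ge m$ and hence lie in $\D^+(\scrA)$. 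Passing to the derived category converts this short exact sequence of complexes into the desired exact triangle
\[
\bigoplus_{i\in\bZ}F^{\bullet}_i\longrightarrow\bigoplus_{i\in\bZ}F^{\bullet}_i\longrightarrow F^{\bullet}\longrightarrow\Bigl(\bigoplus_{i\in\bZ}F^{\bullet}_i\Bigr)[1],
\]
with the first two terms and $F^{\bullet}$ exactly as required.

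This lemma is essentially routine, so there is no serious obstacle; the only points needing a little care are the two assertions about $\delta$, namely that it is injective and that its cokernel is $F^{\bullet}$. Both reduce, degree by degree, to the elementary fact that for a sequential directed system $A_m\to A_{m+1}\to\cdots$ in a Grothendieck category the telescope sequence $0\to\bigoplus_i A_i\xrightarrow{\id-\operatorname{shift}}\bigoplus_i A_i\to\colim_i A_i\to0$ is exact. It is precisely exactness of coproducts in $\scrA$ that allows these degreewise short exact sequences to assemble into a short exact sequence of complexes, and the bound $k\ge m$ is manifestly preserved, so everything stays inside $\D^+(\scrA)$.
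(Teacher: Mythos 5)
Your proof is correct and takes essentially the same approach as the paper: the same canonical truncations $F^{\bullet}_i=\tau_{\le i}(F^{\bullet})$, the same telescope map $\id\oplus(-\iota_i)$, and the same passage from the resulting short exact sequence in $\Ch(\scrA)$ to an exact triangle in $\D^{+}(\scrA)$. Your write-up just makes explicit the routine checks (injectivity of $\delta$ and identification of its cokernel via exactness of filtered colimits in a Grothendieck category) that the paper leaves implicit.
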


Let $\scrB$ be a Grothendieck category with enough projectives, and let $F\colon \scrA\to\scrB$ be an addi\-tive functor such that it commutes with small direct sums and has a left adjoint functor $G\colon \scrB\to \scrA$. The adjunction $G\dashv F$ implies that $F$ is left exact and $G$ is right exact. By~\cite[Proposition~14.3.4]{ks2}, $F$ admits a right derived functor
 \begin{gather*}
 \bR F\colon\ \D(\scrA)\to \D(\scrB),
 \end{gather*}
 which commutes with small direct sums, and $ \bR F$ restricts to a functor $\bR^+\!F\colon \D^+(\scrA)\to \D^+(\scrB)$. On the other hand, by~\cite[Theorem~14.4.3]{ks2}, $G$ admits a left derived functor
 \begin{gather*}
 \bL G\colon\ \D(\scrB)\to \D(\scrA),
 \end{gather*}
 and by~\cite[Theorem~14.4.5]{ks2} the adjunction $G\dashv F$ induces an adjunction
 \begin{gather*}
 \bL G\dashv \bR F.
 \end{gather*}

\begin{lem}\label{above ff}
 Assume that $\bR^+\!F\colon \D^+(\scrA)\to \D^+(\scrB)$ restricts to the functor $\bR F\colon \Db(\scrA)\to\Db(\scrB)$ and that there exists a positive integer $d>0$ such that for every object $B\in \scrB$ the projective dimension of $B$ is less than $d$.
 Then, we have the following:
 \begin{itemize}\itemsep=0pt
 \item[$1.$] The functor $\bL G$ restricts to a functor $\bL^{+}G\colon \D^+(\scrB)\to \D^+(\scrA)$.
 \item[$2.$] If $\bR F\colon \Db(\scrA)\to\Db(\scrB)$ is fully faithful, so is $\bR^+\!F\colon \D^+(\scrA)\to \D^+(\scrB)$.
 \end{itemize}
\begin{proof}
(1)
Let $B^{\bullet}\in \D^+(\scrB)$ be an object, and let $m\in\bZ$ be an integer such that $B^k=0$ for any $k<m$. Then by Lemma~\ref{canonical truncation} there are a family $\{B^{\bullet}_i\}_{i\in\bZ}$ of objects in $\Db(\scrB)$ with $B_i^k=0$ for any $k<m$ and $i$ and an exact sequence
\begin{equation}\label{truncation triangle}
\bigoplus_{i\in \bZ}B^{\bullet}_i\to \bigoplus_{i\in \bZ}B^{\bullet}_i\to B^{\bullet}\to \bigoplus_{i\in \bZ}B^{\bullet}_i[1].
\end{equation}
We set $C_i^{\bullet}\defeq\bL G(B^{\bullet}_i)$. Then, by the assumption, $C_i^{k}=0$ for any $k<m-d$ and $i$, and so $\bigoplus_{i\in \bZ}C_i^{\bullet}$ lies in $\D^+(\scrA)$.
Since $\bL G$ admits a right adjoint functor, it commutes with small direct sums.
Applying the functor $\bL G$ to the triangle \eqref{truncation triangle}, we obtain a triangle
\begin{gather*}
\bigoplus_{i\in \bZ}C^{\bullet}_i\to \bigoplus_{i\in \bZ}C^{\bullet}_i\to \bL G(B^{\bullet})\to \bigoplus_{i\in \bZ}C^{\bullet}_i[1],
\end{gather*}
which implies that $\bL G(B^{\bullet})\in\D^+(\scrA)$.

(2) Since $\bR^+\!F$ admits a left adjoint functor $\bL^+G$ by (1), it is enough to show that the adjunction morphism
\begin{gather*}
\varepsilon\colon\ \Upphi\defeq \bL^+G\circ \bR^+\!F\to \id_{\D^+(\scrA)}
\end{gather*}
is an isomorphism of functors. Let $A^{\bullet}\in \D^+(\scrA)$ be an object. Then by Lemma~\ref{canonical truncation} there are a family $\{A^{\bullet}_i\}_{i\in\bZ}$ of objects in $\Db(\scrA)$ and an exact triangle
\begin{gather*}
\bigoplus_{i\in \bZ}A^{\bullet}_i\xrightarrow{f} \bigoplus_{i\in \bZ}A^{\bullet}_i\xrightarrow{g} A^{\bullet}\to \bigoplus_{i\in \bZ}A^{\bullet}_i[1].
\end{gather*}
Consider the following commutative diagram
\[
\begin{tikzcd}
\Upphi\big(\bigoplus_{i\in \bZ}A^{\bullet}_i\big)\arrow[rr, "\Upphi(f)"]\arrow[d, "\varepsilon"']&&\Upphi\big(\bigoplus_{i\in \bZ}A^{\bullet}_i\big)\arrow[rr, "\Upphi(g)"]\arrow[d, "\varepsilon"']&&\Upphi(A^{\bullet})\arrow[d, "\varepsilon"]
\\
\bigoplus_{i\in \bZ}A^{\bullet}_i\arrow[rr, "f"]&&\bigoplus_{i\in \bZ}A^{\bullet}_i\arrow[rr, "g"]&&A^{\bullet},
\end{tikzcd}
\]
where the horizontal sequences are triangles and the vertical arrows are induced by $\varepsilon$. Since $\Upphi$ commutes with direct sums, the vertical arrows on the left and middle are the direct sums of morphisms $\varepsilon_i\defeq\varepsilon(A_i^{\bullet})\colon\Upphi(A_i^{\bullet})\to A_i^{\bullet}$, and each $\varepsilon_i$ is an isomorphism since $\bR F\colon \Db(\scrA)\to\Db(\scrB)$ is fully faithful and $A_i^{\bullet}\in\Db(\scrA)$. Hence the vertical arrow on the right hand side is also an~iso\-morphism.
\end{proof}
\end{lem}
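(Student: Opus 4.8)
The plan rests on two preparatory facts. The first records what the hypothesis on $\scrB$ buys: for a \emph{bounded} complex $D^\bullet\in\Db(\scrB)$ concentrated in degrees $\geq m$, replacing each term by a projective resolution of length $\leq d-1$ and totalizing produces a bounded complex of projectives in degrees $\geq m-d+1$ that computes $\bL G(D^\bullet)$, so $\bL G(D^\bullet)$ is concentrated in degrees $\geq m-d+1$. The second is Lemma~\ref{canonical truncation}: any $B^\bullet\in\D^+(\scrB)$, say with $B^k=0$ for $k<m$, is the third term of a telescoping triangle $\bigoplus_i B^\bullet_i\to\bigoplus_i B^\bullet_i\to B^\bullet\to\bigoplus_i B^\bullet_i[1]$ in which each $B^\bullet_i$ lies in $\Db(\scrB)$ and is still concentrated in degrees $\geq m$; the same lemma applies with $\scrA$ in place of $\scrB$.

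For part~(1) I would apply $\bL G$ to the telescoping triangle of a given $B^\bullet\in\D^+(\scrB)$. Since $\bL G$ has a right adjoint, namely $\bR F$, it commutes with coproducts, so the first two terms of the resulting triangle are $\bigoplus_i\bL G(B^\bullet_i)$, which lie in $\D^{\geq m-d+1}(\scrA)\subseteq\D^+(\scrA)$ by the first fact. Hence the third term $\bL G(B^\bullet)$ lies in $\D^+(\scrA)$ as well, and $\bL^+G\colon\D^+(\scrB)\to\D^+(\scrA)$ is well defined.

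For part~(2), part~(1) makes the adjunction $\bL G\dashv\bR F$ restrict to $\bL^+G\dashv\bR^+\!F$, so $\bR^+\!F$ is fully faithful exactly when the counit $\varepsilon\colon\Upphi\defeq\bL^+G\circ\bR^+\!F\to\id_{\D^+(\scrA)}$ is an isomorphism; here $\Upphi$ commutes with coproducts because $\bR^+\!F$ does (as $\bR F$ does) and $\bL^+G$ does (it has a right adjoint). Given $A^\bullet\in\D^+(\scrA)$, I would apply $\Upphi$ to its telescoping triangle $\bigoplus_i A^\bullet_i\to\bigoplus_i A^\bullet_i\to A^\bullet\to\bigoplus_i A^\bullet_i[1]$ with $A^\bullet_i\in\Db(\scrA)$ and compare with the original triangle via $\varepsilon$. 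The left and middle vertical maps are $\bigoplus_i\varepsilon(A^\bullet_i)$, and each $\varepsilon(A^\bullet_i)$ is an isomorphism: on $\Db(\scrA)$ the pair $(\Upphi,\varepsilon)$ coincides with $\bL G\circ\bR F$ and its counit, using that $\bR^+\!F$ restricts to $\bR F\colon\Db(\scrA)\to\Db(\scrB)$, which is fully faithful by assumption. Since in a triangulated category a morphism of triangles two of whose vertical components are isomorphisms has the third an isomorphism as well, $\varepsilon(A^\bullet)$ is an isomorphism.

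The hard part will be part~(1). The finite global dimension hypothesis is indispensable there (without it $\bL G$ need not carry even a single object of $\scrB$ into $\D^+(\scrA)$), and one also cannot argue directly with an arbitrary bounded-below complex, since $\bL G$ is controlled termwise only on bounded complexes; Lemma~\ref{canonical truncation} is precisely the device that trades an unbounded-above complex for a coproduct of bounded ones at the cost of one triangle. Part~(2) is then formal, the one delicate point being the compatibility of $\Upphi$ with the (possibly infinite) coproduct in the telescope.
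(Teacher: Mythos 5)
Your proposal is correct and follows essentially the same route as the paper's proof: both parts use Lemma~\ref{canonical truncation} to present a bounded-below complex as the cone in a telescoping triangle of coproducts of bounded truncations, exploit that $\bL G$ and $\Upphi$ commute with coproducts, use the finite-projective-dimension bound on $\scrB$ to control $\bL G$ on bounded complexes, and conclude part (2) by the two-out-of-three property for a morphism of triangles.
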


Now we are ready to prove that a tilting object induces an equivalence of derived factorization categories. Let $T\in \scrA$ be a partial tilting object such that the exact subcategory $\Add T\subset \scrA$ is preserved by $\Phi$. Set $\Lambda\defeq\End_{\scrA}(T)$, and
 let $(\Psi,v)$ be a potential on $\Mod\Lambda$. We define
 \begin{gather*}
 \DcoMod(\Lambda,\Psi,v)\defeq\Dco(\Mod\Lambda,\Psi,v),
 \\
 \Dmod(\Lambda,\Psi,v)\defeq\D(\fmod\Lambda,\Psi,v).
 \end{gather*}
Consider the functor
 \begin{gather*}
 \F\defeq\Hom_{\scrA}(T,-)\colon\ \scrA\to \Mod\Lambda,
 \end{gather*}
 and assume that there is a left adjoint functor $\G\colon \Mod\Lambda\to \scrA$, and that $\F$ and $\G$ are factored with respect to $(\Phi,w)$ and $(\Psi,v)$. Let
 \begin{gather*}
 \scrC\subset \scrA
 \end{gather*}
 be an abelian subcategory preserved by $\Phi$ such that $\add T\!\subset \scrC$, $\F(\scrC)\!\subset \fmod\Lambda$ and \mbox{$\G(\fmod\Lambda)\subset \scrC$}. Assume that the natural functors $\Db(\scrC)\to\Db(\scrA)$ and $\D(\scrC,\Phi,w)\to \Dco(\scrA,\Phi,w)$ are fully faithful and that $\bR \F\colon\Db(\scrA)\to\Db(\Mod\Lambda)$ restricts to $\bR \F\colon\Db(\scrC)\to\Db(\fmod\Lambda)$. Then $\bR \F\colon\Dco(\scrA,\Phi,w)\to \DcoMod(\Lambda,\Psi,v)$ restricts to the functor
\begin{gather*}
\bR \F\colon\ \D(\scrC,\Phi,w)\to \Dmod(\Lambda,\Psi,v).
\end{gather*}
Moreover we assume that $\Lambda$ is of finite global dimension. Then we have the left derived functor
\begin{gather*}
\bL\G\colon\ \Dmod(\Lambda,\Psi,v)\to \D(\scrC,\Phi,w)
\end{gather*}
that is left adjoint to $\bR\F\colon\D(\scrC,\Phi,w)\to \Dmod(\Lambda,\Psi,v)$.
\begin{thm}\label{main thm}

Suppose that the adjunction morphisms $\Lambda\to \F\G(\Lambda)$ and $\G\F(T)\to T$ are isomorphisms.
\begin{itemize}\itemsep=0pt
\item[$1.$] The functor $\bL \G\colon \Dmod(\Lambda,\Psi,v)\to \D(\scrC,\Phi,w)$ is fully faithful, and it factors through an equivalence
\begin{gather*}
\bL \G\colon \Dmod(\Lambda,\Psi,v)\simto \Kadd(T,\Phi,w).
\end{gather*}

\item[$2.$]Assume that every object in $\scrA$ has finite injective resolution and that the right derived functor $\bR \F\colon\Db(\scrA)\to\Db(\Mod\Lambda)$ is an equivalence. Then the functor
\begin{gather*}
\bR \F\colon\ \Dco(\scrA,\Phi,w)\to \DcoMod(\Lambda,\Psi,v)
\end{gather*}
is an equivalence, and it restricts to the equivalence
\begin{gather*}
\bR \F\colon\ \D(\scrC,\Phi,w)\simto \Dmod(\Lambda,\Psi,v).
\end{gather*}
\end{itemize}
\begin{proof}
(1) Recall that the functor $\bL\G\colon\Dmod(\Lambda,\Psi,v)\to \D(\scrC,\Phi,w)$ is the composition
\[
\begin{tikzcd}
\Dmod(\Lambda,\Psi,v)\arrow[r,"\sim"]&\K(\proj\Lambda,\Psi,v)\arrow[r,"\G"] &\Kadd(T,\Phi,w)\arrow[r,"Q"]&\D(\scrC,\Phi,w),
\end{tikzcd}
\]
where the first equivalence follows from Proposition~\ref{proj resol} and $Q$ is the natural quotient functor. Since we have the equivalence $\G\colon \proj\Lambda\simto \add T$, the middle functor $\G$ is also an equivalence. Thus the statement follows from Lemma~\ref{add ff} and the assumption that the natural functor $\D(\scrC,\Phi,w)\to \Dco(\scrA,\Phi,w)$ is fully faithful.

(2) Since $(\bR \F)^+\colon\D^+(\scrA)\to\D^+(\Mod\Lambda)$ is fully faithful by Lemma~\ref{above ff}, the functor $\bR \F $: $\Dco(\scrA,\Phi,w)\to \DcoMod(\Lambda,\Psi,v)$ is also fully faithful by~\cite[Lemma~4.11]{bdfik}. Consider the following commutative diagram:
\[
\begin{tikzcd}
\K\Add(T,\Phi,w)\arrow[rr, "\F"]\arrow[d, ]&&\K\Proj(\Lambda,\Psi,v)\arrow[d, ]\\
\Dco(\scrA,\Phi,w)\arrow[rr, "\bR \F"]&&\DcoMod(\Lambda,\Psi,v).
\end{tikzcd}
\]
Since the additive functor $\F\colon\Add(T)\to \Proj\Lambda$ is an equivalence, the top horizontal arrow is an equivalence. Moreover, the vertical arrow on the right hand side is essentially surjective by~Proposition~\ref{proj resol}. Hence $\bR \F\colon\Dco(\scrA,\Phi,w)\to \DcoMod(\Lambda,\Psi,v)$ is essentially surjective, and so it is an equivalence.

Since the natural embedding functors $\D(\scrC,\Phi,w)\to \Dco(\scrA,\Phi,w)$ and $\Dmod(\Lambda,\Psi,v)\to \DcoMod(\Lambda,\Psi,v)$ are fully faithful, the functor $\bR \F\colon\!\D(\scrC,\Phi,w)\!\to \Dmod(\Lambda,\Psi,v)$ and its left~adjo\-int $\bL \G\colon \! \Dmod(\Lambda,\Psi,v)\to \D(\scrE,\Phi,w)$ are fully faithful. Hence $\bR \F\colon\!\D(\scrC,\Phi,w)\to
\Dmod(\Lambda,\Psi,v)$ is also an equivalence.
\end{proof}
\end{thm}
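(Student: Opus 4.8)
The plan is to deduce both parts from one elementary observation, which is the factorization analogue of Lemma~\ref{add proj}: the hypothesis that the adjunction morphisms $\Lambda\to\F\G(\Lambda)$ and $\G\F(T)\to T$ are isomorphisms, together with $\Proj\Lambda=\Add\Lambda$, forces $\F$ and $\G$ to restrict to mutually inverse additive equivalences $\F\colon\add T\simto\proj\Lambda$ and $\F\colon\Add T\simto\Proj\Lambda$. Since $\add T$ and $\Add T$ are preserved by $\Phi$ while $\proj\Lambda$ and $\Proj\Lambda$ are preserved by $\Psi$, and both $\F$ and $\G$ are factored functors, these promote to equivalences of factorization categories $\F\colon\Kadd(T,\Phi,w)\simto\K(\proj\Lambda,\Psi,v)$ and $\F\colon\K\Add(T,\Phi,w)\simto\K\Proj(\Lambda,\Psi,v)$, with quasi-inverse $\G$.

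For (1), I would unwind $\bL\G$ as the composite $\Dmod(\Lambda,\Psi,v)\xrightarrow{\sim}\K(\proj\Lambda,\Psi,v)\xrightarrow{\G}\Kadd(T,\Phi,w)\xrightarrow{Q}\D(\scrC,\Phi,w)$. The first arrow is an equivalence by Proposition~\ref{proj resol}(2), using that finite global dimension of $\Lambda$ makes every object of $\fmod\Lambda$ admit a finite projective resolution in $\fmod\Lambda$; the second is the equivalence $\G$ discussed above. The content of the statement is that $Q$ is fully faithful: since $T$ is partial tilting, $\Ext^{i}_{\scrA}$ vanishes on $\Add T$ for all $i>0$, so Lemma~\ref{add ff} gives that the natural functor $\Kadd(T,\Phi,w)\to\Dco(\scrA,\Phi,w)$ is fully faithful, and composing with the standing hypothesis that $\D(\scrC,\Phi,w)\to\Dco(\scrA,\Phi,w)$ is fully faithful shows $Q$ is fully faithful with essential image the copy of $\Kadd(T,\Phi,w)$ sitting inside $\D(\scrC,\Phi,w)$. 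Hence $\bL\G$ is fully faithful and factors through the claimed equivalence.

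For (2), full faithfulness of $\bR\F\colon\Dco(\scrA,\Phi,w)\to\DcoMod(\Lambda,\Psi,v)$ would come in two steps. First, Lemma~\ref{above ff}(2) promotes the hypothesis that $\bR\F\colon\Db(\scrA)\to\Db(\Mod\Lambda)$ is an equivalence to full faithfulness of $\bR^{+}\F\colon\D^{+}(\scrA)\to\D^{+}(\Mod\Lambda)$, the uniform bound on projective dimensions in $\Mod\Lambda$ being supplied by finite global dimension of $\Lambda$. Second, \cite[Lemma~4.11]{bdfik} transports this to the coderived factorization level, which is permitted because $\scrA$ has enough injectives and finite injective resolutions. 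For essential surjectivity I would use the commutative square whose top arrow is the equivalence $\F\colon\K\Add(T,\Phi,w)\simto\K\Proj(\Lambda,\Psi,v)$ and whose vertical arrows are the canonical functors to $\Dco(\scrA,\Phi,w)$ and $\DcoMod(\Lambda,\Psi,v)$: the right-hand vertical arrow is essentially surjective by Proposition~\ref{proj resol} (again using that $\Lambda$ has finite global dimension, so every $\Lambda$-module has finite projective dimension), hence the composite down-then-across and so $\bR\F$ itself is essentially surjective. Thus $\bR\F\colon\Dco(\scrA,\Phi,w)\simto\DcoMod(\Lambda,\Psi,v)$. The restricted statement then follows formally: the inclusions $\D(\scrC,\Phi,w)\hookto\Dco(\scrA,\Phi,w)$ and $\Dmod(\Lambda,\Psi,v)\hookto\DcoMod(\Lambda,\Psi,v)$ are fully faithful, so the already-constructed adjoint pair $\bL\G\dashv\bR\F$ between $\Dmod(\Lambda,\Psi,v)$ and $\D(\scrC,\Phi,w)$ consists of two fully faithful functors, whence its unit and counit are isomorphisms and $\bR\F$ restricted is an equivalence with inverse $\bL\G$.

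I expect the main obstacle to be exactly the passage between the bounded derived categories and the (co)derived factorization categories — Lemma~\ref{above ff} together with \cite[Lemma~4.11]{bdfik} on the full-faithfulness side, and Proposition~\ref{proj resol} on the essential-surjectivity side — since these encode the nontrivial homological algebra of factorization categories (the coderived Verdier quotient, compactness of the components of objects of $\add T$, and the role of the bounded injective/projective dimension hypotheses). Everything else is bookkeeping with adjunctions and with the additive equivalences relating $\add T$, $\proj\Lambda$ and their idempotent-complete versions $\Add T$, $\Proj\Lambda$.
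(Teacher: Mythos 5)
Your proposal is correct and follows essentially the same route as the paper's proof: the same unwinding of $\bL\G$ through $\K(\proj\Lambda,\Psi,v)$ and $\Kadd(T,\Phi,w)$ with full faithfulness of $Q$ coming from Lemma~\ref{add ff} plus the standing hypothesis on $\D(\scrC,\Phi,w)\to\Dco(\scrA,\Phi,w)$, and the same two-step argument for (2) using Lemma~\ref{above ff}(2) together with \cite[Lemma~4.11]{bdfik} for full faithfulness, the commutative square with the equivalence $\F\colon\K\Add(T,\Phi,w)\simto\K\Proj(\Lambda,\Psi,v)$ and Proposition~\ref{proj resol} for essential surjectivity, and the formal adjunction argument for the restricted statement.
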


\subsection{Derived factorization categories of noncommutative gauged LG models}

In this short subsection, we define (noncommutative) gauged Landau--Ginzburg models, and its derived factorization categories.

\begin{dfn}
We call the data $(\cA,L,w)^G$ a {\it gauged Landau--Ginzburg model} when $G$ is an~alge\-braic group acting on a scheme $X$, $\cA$ is a $G$-equivariant coherent $X$-algebra, $L$ is a $G$-equivariant line bundle on $X$, and $w\in \Gamma(X,L)^G$ is a $G$-invariant global section of $L$. A gauged Landau--Ginzburg model $(\cA,L,w)^G$ is said to be {\it noncommutative}, if the algebra $\cA$ is not commutative. We write $(X,L,w)^G\defeq(\cO_X,L,w)^G$, and for a character $\chi\colon G\to \bG_m$ of $G$ we write $(\cA,\chi,w)^G\defeq(\cA,\cO(\chi),w)^G$.
\end{dfn}

If $(\cA,L,w)^G$ is a gauged LG model, we have the triple
\begin{gather*}
\big(\Qcoh_G\cA,L\otimes_{\cO_X}(-),w\big)
\end{gather*}
as in \eqref{triple}, where $L\otimes_{\cO_X}(-)\colon \Qcoh_G\cA\simto \Qcoh_G\cA$ is the tensor product with $L$, and $w\colon\id\to L\otimes_{\cO_X}(-)$ is the functor morphism defined by the multiplication by $w$. Then we define
\begin{gather*}
\Qcoh_G(\cA,L,w)\defeq\Fact\big(\Qcoh_G\cA,L\otimes_{\cO_X}(-),w\big),
\\[.5ex]
\DcoQcoh_G(\cA,L,w)\defeq\Dco\big(\Qcoh_G\cA,L\otimes_{\cO_X}(-),w\big),
\\[.5ex]
\Dcoh_G(\cA,L,w)\defeq\D\big(\coh_G\cA,L\otimes_{\cO_X}(-),w\big),
\\[.5ex]
\DMF_G(\cA,L,w)\defeq\D\big(\lfr_G\cA,L\otimes_{\cO_X}(-),w\big),
\end{gather*}
{\sloppy
where $\lfr_G\cA$ is the subcategory of $\coh_G\cA$ consisting of locally free $\cA$-modules. We call $\Dcoh_G(\cA,L,w)$ the {\it derived factorization category} of $(\cA,L,w)^G$, and $\DMF_G(\cA,L,w)$ the {\it derived matrix factorization category} of $(\cA,L,w)^G$. If $X=\Spec R$ is an affine scheme and $A\defeq\Gamma(X,\cA)$ is the corresponding $G$-equivariant $R$-algebra, we define
\begin{gather*}
\Mod_G(A,L,w)\defeq\Fact\big(\Mod_GA,L\otimes_{R}(-),w\big),
\\[.5ex]
\DcoMod_G(A,L,w)\defeq\Dco\big(\Mod_G\cA,L\otimes_{R}(-),w\big),
\\[.5ex]
\Dmod_G(A,L,w)\defeq\D\big(\fmod_GA,L\otimes_{R}(-),w\big),
\\[.5ex]
\KMF_G(A,L,w)\defeq\K\big(\proj_GA,L\otimes_{R}(-),w\big).
\end{gather*}

}

\subsection{Equivariant tilting modules and factorizations}
In this subsection, we apply the general result in Section~\ref{general morita} to the following geometric setting.

Let the notation be the same as in Section~\ref{derived section}.
Let $\chi\colon G/H\to \bG_m$ be a character of $G/H$, and set $\widehat{\chi}\defeq\chi\circ p\colon G\to \bG_m$, where $p\colon G\to G/H$ is the natural projection. Write for $\cO_R(\chi)$ (resp.\ $\cO_X\big(\widehat{\chi}\big)$) the associated $G/H$-equivariant invertible sheaf on $\Spec R$ (resp.\ $G$-equivariant invertible sheaf on $X$). Take a $G/H$-invariant global section $w_R\in \Gamma(\Spec R,\cO_R(\chi))^{G/H}$ of~$\cO_R(\chi)$, and set $w\defeq (f)^*_{p}w_R\in \Gamma\big(X,\cO_X\big(\widehat{\chi}\big)\big)^G$, where $(f)^*_{p}\colon \Mod_{G/H}R\to \Qcoh_GX$ is the pull-back by the $p$-equivariant morphism $f$.
Then we have the gauged LG models
\begin{gather*}
\big(\cA,\widehat{\chi},w\big)^G \qquad \text{and}\qquad \big(\Lambda^H,\chi,w_R\big)^{G/H},
\end{gather*}
and we define
\begin{gather*}
\Kadd_G\big(\cT,\widehat{\chi},w\big)\defeq \K\big(\add_{\widehat{\chi}}\cT,\cO\big(\widehat{\chi}\big)\otimes_{\cO_X}(-),w\big),
\end{gather*}
where $\add_{\widehat{\chi}}\cT$ is the subcategory of $\coh_G\cA$ defined by the following additive closure
\begin{gather*}
\add_{\widehat{\chi}}\cT\defeq\add\big\{\cO\big(\widehat{\chi}^n\big)\otimes_{\cO_X} \big(\cT,\uptheta^{\cT}\big)\mid n\in \bZ \big\}.
\end{gather*}

Since the functors $\F\colon \Qcoh_{G}\cA\to \Mod_{G/H}\Lambda^H$ and $\G\colon \Mod_{G/H}\Lambda^H\to \Qcoh_{G}\cA$ are factored with respect to $(\widehat{\chi},w)$ and $(\chi,w_R)$, they induce the functors
\begin{gather*}
\F\colon\ \Qcoh_G\big(\cA,\widehat{\chi},w\big)\to\Mod_{G/H}\big(\Lambda^H,\chi,w_R\big),
\\
\G\colon\ \Mod_{G/H}\big(\Lambda^H,\chi,w_R\big)\to \Qcoh_G\big(\cA,\widehat{\chi},w\big).
\end{gather*}
Since $\Qcoh_G\cA$ has enough injectives, we have the right derived functor
\begin{gather*}
\bR\F\colon\ \DcoQcoh_G\big(\cA,\widehat{\chi},w\big)\to\DcoMod_{G/H}\big(\Lambda^H,\chi,w_R\big),
\end{gather*}
and it restricts to the functor
\begin{gather*}
\bR\F\colon\ \Dcoh_G\big(\cA,\widehat{\chi},w\big)\to\Dmod_{G/H}\big(\Lambda^H,\chi,w_R\big).
\end{gather*}
If $G/H$ is reductive and every $\Lambda^H$-module has a finite projective resolution, then every finitely generated $G/H$-equivariant $\Lambda^H$-module has a finite projective resolution in $\fmod_{G/H}\Lambda^H$ by Lemma~\ref{finite proj resol}. In this case, we have the left derived functor
\begin{gather*}
\bL\G\colon\ \Dmod_{G/H}\big(\Lambda^H,\chi,w_R\big)\to \Dcoh_{G}\big(\cA,\widehat{\chi},w\big).
\end{gather*}

\begin{thm}\label{tilt and fact}
Assume that $G/H$ is reductive and $\Lambda^H$ is of finite global dimension.
\begin{itemize}\itemsep=0pt
\item[$1.$] If $\big(\cT,\uptheta^{\cT}\big)$ is partial $(G,H)$-tilting, then the functor
\begin{gather*}
\bL\G\colon\ \Dmod_{G/H}\big(\Lambda^H,\chi,w_R\big)\to \Dcoh_{G}\big(\cA,\widehat{\chi},w\big)
\end{gather*}
is fully faithful, and it restricts to the equivalence
\begin{gather*}
\bL\G\colon\ \Dmod_{G/H}\big(\Lambda^H,\chi,w_R\big)\simto \Kadd_{G}\big(\cT,\widehat{\chi},w\big).
\end{gather*}
\item[$2.$] If $\big(\cT,\uptheta^{\cT}\big)$ is $(G,H)$-tilting and every $\cM\in \Qcoh\cA$ has a finite injective resolution in $\Qcoh\cA$, then we have the equivalence
\begin{gather*}
\bR\F\colon\ \Dcoh_{G}\big(\cA,\widehat{\chi},w\big)\simto \Dmod_{G/H}\big(\Lambda^H,\chi,w_R\big).
\end{gather*}
\end{itemize}
\begin{proof}
By an identical argument as in the proof of Lemma~\ref{faithful lemma}, we see that the restriction functors
\begin{gather*}
\Res^G_H\colon\ \Dcoh_G\big(\cA,\widehat{\chi},w\big)\to \Dcoh_H(\cA,\chi,w),
\\
\Res\colon\ \Dmod_{G/H}\big(\Lambda^H,\chi,w_R\big)\to \Dmod\big(\Lambda^H,\id,w_R\big)
\end{gather*}
are faithful. Let $\F_H\colon \coh_H\cA\to\fmod\Lambda^H$ and $\G_H\colon\fmod\Lambda^H\to\coh_H\cA$
be the same functors as in the proof of Theorem~\ref{derived equiv}, and recall that the adjunction morphisms
$\Lambda^H\to \F_H\G_H\big(\Lambda^H\big)$ and $\G_H\F_H(\cT_H)\to \cT_H$
are isomorphisms. Then these functors define the functors
\begin{gather*}
\bR\F_H\colon\ \Dcoh_H\big(\cA,\widehat{\chi},w\big)\to \Dmod\big(\Lambda^H,\chi,w_R\big),
\\
\bL\G_H\colon\ \Dmod\big(\Lambda^H,\chi,w_R\big)\to \Dcoh_H\big(\cA,\widehat{\chi},w\big),
\end{gather*}
and if $\big(\cT,\uptheta^{\cT}\big)$ is partial $(G,H)$-tilting (resp.\ $(G,H)$-tilting), $\bL\G_H$ is fully faithful (resp.\ an~equi\-valence) by Theorem~\ref{main thm}.
Hence the results follow from Lemma~\ref{reduction lemma}.
\end{proof}
\end{thm}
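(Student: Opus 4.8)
The plan is to bootstrap Theorem~\ref{main thm}, applied with ambient Grothendieck category $\Qcoh_H\cA$, up to the $(G/H)$-equivariant statement, in exactly the way Theorem~\ref{derived equiv} was deduced from Theorem~\ref{general tilting}: establish that the relevant restriction functors on derived factorization categories are faithful, obtain the conclusion ``$H$-equivariantly'' from Theorem~\ref{main thm}, and then descend along the faithful restrictions using Lemma~\ref{reduction lemma}.

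First I would prove the factorization-category analogue of Lemma~\ref{faithful lemma}. Since $G/H$ is affine, the induction functor $\Ind^G_H$ is exact by Lemma~\ref{split mono}(1), so it passes to an exact functor on homotopy and derived factorization categories that is right adjoint to $\Res^G_H$; since $G/H$ is reductive, the unit $\id\to\Ind^G_H\Res^G_H$ is a split monomorphism by Lemma~\ref{split mono}(3), and this splitting extends to $\Dcoh$ because $\Ind^G_H$ and $\Res^G_H$ are exact. Lemma~\ref{faithful general} then shows that the restrictions $\Res^G_H\colon\Dcoh_G\big(\cA,\widehat{\chi},w\big)\to\Dcoh_H\big(\cA,\widehat{\chi},w\big)$ and $\Res\colon\Dmod_{G/H}\big(\Lambda^H,\chi,w_R\big)\to\Dmod\big(\Lambda^H,\id,w_R\big)$ are faithful.

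Next, set $\cT_H\defeq\Res^G_H\big(\cT,\uptheta^{\cT}\big)$, which is by hypothesis a partial tilting (resp.\ tilting) object of $\Qcoh_H\cA$ with $\End_{\Qcoh_H\cA}(\cT_H)\cong\Lambda^H$, and apply Theorem~\ref{main thm} with $\scrA=\Qcoh_H\cA$, $\scrC=\coh_H\cA$, $\Phi=\cO_X\big(\widehat{\chi}\big)\otimes_{\cO_X}(-)$, and the functors $\F_H=\Hom_{\Qcoh_H\cA}(\cT_H,-)$ and $\G_H$ from the proof of Theorem~\ref{derived equiv}, for which the adjunction isomorphisms $\Lambda^H\to\F_H\G_H\big(\Lambda^H\big)$ and $\G_H\F_H(\cT_H)\to\cT_H$ were already verified. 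The remaining hypotheses of Theorem~\ref{main thm} are put together from the preparatory results: $\Qcoh_H\cA$ is Grothendieck (Proposition~\ref{grothen prop}); the resolution property (Lemma~\ref{enough proj}) together with Proposition~\ref{proj resol} and Lemma~\ref{add ff} gives that $\Db(\coh_H\cA)\to\Db(\Qcoh_H\cA)$ and $\D(\coh_H\cA,\Phi,w)\to\Dco(\Qcoh_H\cA,\Phi,w)$ are fully faithful; $\bR\F_H$ restricts to the coherent subcategories because $\overline{f}$ is proper; $\Lambda^H$ has finite global dimension by assumption; finitely generated objects carry finite equivariant projective resolutions by Lemma~\ref{finite proj resol}; and in case~(2) the assumed finiteness of injective resolutions in $\Qcoh\cA$ is used to supply the corresponding finiteness in $\Qcoh_H\cA$. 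Theorem~\ref{main thm} then yields that $\bL\G_H$ is fully faithful and factors through an equivalence onto $\Kadd(\cT_H,\widehat{\chi},w)$ when $\cT_H$ is partial tilting, and that $\bR\F_H\colon\Dcoh_H\big(\cA,\widehat{\chi},w\big)\simto\Dmod\big(\Lambda^H,\id,w_R\big)$ is an equivalence when $\cT_H$ is tilting.

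Finally I would glue the two pictures. The square with horizontal arrows $\bR\F,\bL\G$ on top and $\bR\F_H,\bL\G_H$ on the bottom and vertical arrows the faithful restrictions of the second paragraph commutes up to natural isomorphism, because $\Res^G_H$ is exact and commutes with each of $\cHom_\cA(\cT,-)$, $f_\star$, $(-)^H$ and $(-)\otimes_{\cEnd(\cT)}\cT$, and these isomorphisms are compatible with the adjunction units and counits. Feeding this data into Lemma~\ref{reduction lemma} descends full faithfulness of $\bL\G_H$ to full faithfulness of $\bL\G$ (part~1) and the equivalence $\bR\F_H$ to the equivalence $\bR\F$ (part~2); since $\widehat{\chi}$ restricts trivially to $H$, $\Res^G_H$ carries $\add_{\widehat{\chi}}\cT$ into $\add\cT_H$, so the $\Kadd$-refinement in part~1 follows from the $H$-level equivalence by the same reduction. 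I expect the genuinely delicate points to be (a) checking that the natural isomorphisms intertwining restriction with $\bR\F$ and $\bL\G$ satisfy the two unit/counit compatibility diagrams required by Lemma~\ref{reduction lemma}, which means tracking the constructions of the derived functors and the effect of restriction on injective and projective resolutions, and (b) in part~(2), verifying that finiteness of injective resolutions in $\Qcoh\cA$ really propagates to $\Qcoh_H\cA$ so that Theorem~\ref{main thm}(2) applies; the rest is a routine assembly of the preparatory propositions and lemmas.
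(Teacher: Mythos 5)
Your proposal is correct and follows the same route as the paper's proof: establish faithfulness of the restriction functors on the derived factorization categories (using Lemmas~\ref{split mono} and~\ref{faithful general}, exactly as in Lemma~\ref{faithful lemma}), obtain the non-equivariant statement from Theorem~\ref{main thm} applied to $\cT_H$ in $\scrA=\Qcoh_H\cA$ with $\F_H$, $\G_H$ as in the proof of Theorem~\ref{derived equiv}, and then descend via Lemma~\ref{reduction lemma}. The two delicate points you flag at the end are genuine but are also left implicit in the paper's own proof; in particular for (b), the hypothesis in the theorem is stated for $\Qcoh\cA$ rather than for $\Qcoh_H\cA$, and passing from one to the other does require an argument --- for instance, when $H$ is linearly reductive one has $\Ext^i_{\Qcoh_H\cA}(\cN,\cM)\cong\Ext^i_{\Qcoh\cA}(\cN,\cM)^H$ by Proposition~\ref{G hom 2}, which suffices for the intended application in Section~5 where $H=\GSp(Q)$.
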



\section[Linear sections of Pfaffian varieties and noncommutative resolutions]
{Linear sections of Pfaffian varieties\\ and noncommutative resolutions}

In this section, we prove that derived categories of noncommutative resolutions of linear sections of Pfaffian varieties are equivalent to the derived factorization categories of noncommutative gauged LG models.

\subsection{Noncommutative resolutions of Pfaffian varieties and its linear sections} Following~\cite{rs}, we recall noncommutative resolutions of Pfaffian varieties and its linear sections.
Let $V$ be a vector space of dimension $v$. For an integer $q$ with $0\leq 2q\leq v$, we have a Pfaffian variety
\begin{gather*}
\Pf_q\defeq\big\{x\in\gwedge V^* \mid \rk(x)\leq 2q\big\}\subseteq \bP\big(\gwedge V^*\big),
\end{gather*}
and we denote its affine cone by
\begin{gather*}
\Pf_q^{\aff}\subseteq \gwedge V^*.
\end{gather*}
Then we have $\dim \Pf_q^{\aff}=q(2v-2q-1)$.
The Pfaffian variety $\Pf_q$ is smooth if and only if~$q=1$, where it defines a Grassmannian $\Gr(2,V)$, or $q=\lfloor v/2\rfloor$, where it defines the whole space $\bP\big(\gwedge V^*\big)$. In other cases, the singular locus is the subvairiety $\Pf_{(q-1)}\subset \Pf_q$. Let $L\subset \gwedge V^*$ be a subspace of codimension $c$ such that
\begin{gather*}
\Pf_q\rest\defeq\Pf_q\cap\,\,\bP(L)\neq\varnothing
\end{gather*}
and $\Pf_q\rest$ has the expected dimension $\dim\Pf_q-c$. If $c>\dim \Sing(\Pf_q)=(q-1)(2v-2q+1)-1$, we can take a generic $L$ so that $\Pf_q\rest$ is smooth. If $c$ is smaller than the bounds, then $\Pf_q\rest$ is never smooth, and in this case the usual bounded derived category $\Db\big(\coh \Pf_q\rest\big)$ does not behave well (for example, in the context of homological projective duality~\cite{rs}). 

 Let $(Q,\omega)$ be a symplectic vector space of dimension $2q$ with a symplectic form $\omega\in \gwedge Q^{\ast}$. Let
\begin{gather*}
\Sp(Q)\defeq\{f\in \GL(Q)\mid f^*\omega=\omega\}
\end{gather*}
be the symplectic group of $Q$, and
\begin{gather*}
\GSp(Q)\defeq\{f\in \GL(Q)\mid \exists\, t\in k^*\mbox{ such that } f^*\omega=t\, \omega\}
\end{gather*}
the symplectic similitude group of $Q$.
Then we have a short exact sequence
\begin{equation}\label{ses}
1\to\Sp(Q)\hookto \GSp(Q)\surjto \bG_m\to1
\end{equation}
that induces a semi-direct product $\GSp(Q)= \Sp(Q)\rtimes\bG_m$, where $\bG_m\subset \GSp(Q)$ is the diagonal subgroup. Let us set
\begin{gather*}
Y\defeq\Hom_k(V,Q)
\end{gather*}
and consider the following quotient stacks
\begin{gather*}
\cY^{\aff}\defeq[Y/\Sp(Q)]
\qquad\text{and}\qquad
\cY\defeq[Y/\GSp(Q)].
\end{gather*}
Then the surjective morphism
\begin{gather*}
p\colon\quad Y\to \Pf_q^{\aff}, \qquad \varphi\mapsto \varphi^*\omega
\end{gather*}
induces the morphisms
\begin{gather*}
\pi^{\aff}\colon\ \cY^{\aff}\to \Pf_q^{\aff}
\qquad\text{and}\qquad
\pi\colon\ \cY\to \big[\Pf_q^{\aff}/\bG_m\big],
\end{gather*}
and if we set $\cY^{\stab}\defeq\left[\left(Y\backslash \,p^{-1}(0)\right)/\GSp(Q)\right]$, the morphism $\pi\colon \cY\to \big[\Pf_q^{\aff}/\bG_m\big]$ restricts to a~(stacky) resolution
\begin{gather*}
\pi\colon\ \cY^{\stab}\to \Pf_q
\end{gather*}
of the Pfaffian variety $\Pf_q$.

Recall that irreducible representations of $\Sp(Q)$ are indexed by Young diagrams of height at most $q$. We denote by $Y_{q,s}$ the set of Young diagrams of height at most $q$ and width at most $s\defeq\lfloor v/2\rfloor-q$. Since vector bundles on $\cY^{\aff}$ are nothing but $\Sp(Q)$-equivariant vector bundles on $Y$, each representation of $\Sp(Q)$ defines a vector bundle
 on $\cY^{\aff}$. For each Young diagram $\gamma\in Y_{q,s}$, we can choose a vector bundle $\scrV_{\gamma}\in \vect\cY$ whose pull-back to $\cY^{\aff}$ by the natural projection $\cY^{\aff}\to \cY$ is the vector bundle associated to $\gamma$ (see~\cite[Section~2.4]{rs}). Then the vector bundle
\begin{gather*}
\scrV:=\bigoplus_{\gamma\in Y_{q,s}}\scrV_{\gamma}\in \coh\cY
\end{gather*}
is a partial tilting bundle on $\cY$. Restricting $\scrV$ to the open substack $\cY^{\stab}$ we obtain a partial tilting bundle
$\scrV^{\stab}$ on $\cY^{\stab}$, and the ${\Pf_q}$-algebra
\begin{gather*}
\cB\defeq\pi_*\cEnd_{\cY^{\stab}}(\scrV^{\stab})
\end{gather*}
is a noncommutative resolution of $\Pf_q$ by~\cite[Section~5]{svdb}.
By the noncommutative Bertini theorem~\cite{rsv}, for a generic $L$ the restriction
\begin{gather*}
\cB_L:=\cB|_{\Pf_q\rest}
\end{gather*}
 is a noncommutative resolution of the linear section $\Pf_q\rest$. If $\Pf_q\rest$ is smooth, the category $\Db (\coh \cB_L )$ is equivalent to $\Db (\coh \Pf_q\rest )$.

 \subsection[Noncommutative resolutions of Pfaffian varieties and noncommutative gauged LG models]
 {Noncommutative resolutions of Pfaffian varieties \\and noncommutative gauged LG models}

 In this subsection, we prove that the derived category of a noncommutative resolution of a linear section of a Pfaffian variety is equivalent to the derived factorization category of a noncommutative gauged Landau--Ginzburg model.

 We use the same notation as in the previous section. We recall that the category $\Db(\coh \cB_L)$ is equivalent to a full subcategory of the derived factorization category of a gauged LG model. We write $L^{\perp}\subset\gwedge V$ for the annihilator of $L$. We have a $\GSp(Q)$-action on the product $Y\times L^{\perp}$ defined by
 \begin{gather*}
 \GSp(Q)\times \big(Y\times L^{\perp}\big)\to \big(Y\times L^{\perp}\big), \qquad (f, \varphi,x)\mapsto \big(f\circ \varphi, \sigma(f)^{-1}x\big),
 \end{gather*}
 where $\sigma\colon \GSp(Q)\to\bG_m$ is the surjection in \eqref{ses}. Consider the following quotient stacks
\begin{gather*}
 \cY\times_{\bG_m}L^{\perp}\defeq\big[Y\times L^{\perp}/\GSp(Q)\big],
 \\
 \cY^{\stab}\times_{\bG_m}L^{\perp}\defeq\big[\big(Y\backslash\,p^{-1}(0)\big)\times L^{\perp}/\GSp(Q)\big].
 \end{gather*}
 The function $W\colon Y\times L^{\perp}\to \bC$ defined by $W(\varphi,x)\defeq(\varphi^*\omega)(x)$ is $\GSp(Q)$-invariant, and thus it defines a potential
 \begin{gather*}
 w\colon\ \cY\times_{\bG_m}L^{\perp}\to \bC.
 \end{gather*}
 We define an additional $\bG_m$-action, which is called an {\it R-charge}, on $Y\times L^{\perp}$ by
 \begin{gather*}
 \bG_m\times \big(Y\times L^{\perp}\big)\to \big(Y\times L^{\perp}\big),\qquad
 (t, \varphi,x)\mapsto (t\varphi, x).
 \end{gather*}
This $\bG_m$-action commutes with the above $\GSp(Q)$-action, and so we have an induced $\bG_m$-action on $\cY\times_{\bG_m}L^{\perp}$. Then the potential $w\colon \cY\times_{\bG_m}L^{\perp}\to \bC$ is a semi-invariant regular function with respect to the character $\chi_1\defeq\id_{\bG_m}\colon \bG_m\to \bG_m$. Following~\cite{rs}, we define the {\it B-brane category} $\DB\big(\cY\times_{\bG_m}L^{\perp},w\big)$
as follows:

 {\sloppy
 We write $\rep\GSp(Q)$ for the category of finite dimensional algebraic representations of~$\GSp(Q)$, and we define the full subcategory
\begin{gather*}
\scrY_{q,s}\subset \rep\GSp(Q)
\end{gather*}}\noindent
consisting of irreducible representations whose restriction to the subgroup $\Sp(Q)$ is an irreducible representation of $\Sp(Q)$ corresponding to some Young diagram in $Y_{q,s}$.
Since the origin $0\in Y\times L^{\perp}$ is a fixed point of the above action from $\GSp(Q)\times \bG_m$, the restriction to the origin defines a functor
\begin{gather*}
(-)|_0\colon\ \Dcoh_{\bG_m}\big(\cY\times_{\bG_m}L^{\perp},\chi_1,w\big)\to \Dcoh_{\bG_m}\big([0/\GSp(Q)],\chi_1,0\big)\simto\Db(\rep\GSp(Q)),
\end{gather*}
where the latter functor is a similarly equivalence as in~\cite[Proposition~2.14]{H2}.
Then we define the category
\begin{gather*}
\DB\big(\cY\times_{\bG_m}L^{\perp},w\big)
\end{gather*}
 to be the full subcategory of $\Dcoh_{\bG_m}\big(\cY\times_{\bG_m}L^{\perp},\chi_1,w\big)$ consisting of objects $F$ such that each cohomology $H^i(F|_0)$ of the restriction $F|_0\in \Db(\rep\GSp(Q))$ lies in $\scrY_{q,s}$. Restricting to the open substack $\cY^{\stab}\times_{\bG_m}L^{\perp}$, we have the B-brane subcategory
 \begin{gather*}
 \DB\big(\cY^{\stab}\times_{\bG_m}L^{\perp},w\big)\subset \Dcoh_{\bG_m}\big(\cY^{\stab}\times_{\bG_m}L^{\perp},\chi_1,w\big),
 \end{gather*}
 and by~\cite[Section~4.1]{rs} we have an equivalence
\begin{equation}\label{nc br}
\Db(\coh \cB_L)\cong {\rm DB}\big(\cY^{\rm ss}\times_{\bG_m}L^{\perp},w\big).
\end{equation}

For an interval $I\subset \bZ$, we define a subcategory
\begin{gather*}
\scrY_{q,s}^I\subseteq \scrY_{q,s}
\end{gather*}
of $\scrY_{q,s}$ consisting of representations whose restriction to the diagonal subgroup $\bG_m\subset \GSp(Q)$ has weights in $I$. This subcategory defines a full subcategory
\begin{gather*}
\DB\big(\cY\times_{\bG_m}L^{\perp},w\big)_I\subset\DB\big(\cY\times_{\bG_m}L^{\perp},w\big)
\end{gather*}
consisting of objects $F$ such that each cohomology $H^i(F|_0)$ of the restriction $F|_0$ lies in $\scrY_{q,s}^I$.
By~\cite[Theorem~4.7]{rs} we have an equivalence
\begin{equation}\label{window}
{\rm DB}\big(\cY^{\rm ss}\times_{\bG_m}L^{\perp},w\big)\cong {\rm DB}\big(\cY\times_{\bG_m}L^{\perp},w\big)_{[-qv,qv]}.
\end{equation}

We set
\begin{gather*}
\scrT:=\bigoplus_{\rho \in \scrY^{[-qv,qv]}_{q,s}} \scrT_{\rho}\in \coh_{\GSp(Q)}Y\times L^{\perp},
\end{gather*}
where $\scrT_{\rho}$ is the $\GSp(Q)$-equivariant vector bundle on $Y\times L^{\perp}$ associated to an irreducible representation $\rho\in\scrY^{[-qv,qv]}_{q,s}$, and choose a $\bG_m\times \GSp(Q)$-equivariant vector bundle
\begin{gather*}
\widetilde{\scrT}\in \coh_{\bG_m\times \GSp(Q)}Y\times L^{\perp}
\end{gather*}
on $Y\times L^{\perp}$ such that $\Res^{\bG_m\times \GSp(Q)}_{\GSp(Q)}\widetilde{\scrT}\cong \scrT$. Then the natural functor
\begin{gather*}
\Kadd_{\bG_m\times \GSp(Q)}\big(\widetilde{\scrT},\chi,w\big)\to \Dcoh_{\bG_m}\big(\cY\times_{\bG_m}L^{\perp},\chi_1,w\big)
\end{gather*}
is fully faithful by Lemma~\ref{add ff}, and by~\cite[Lemma~2.8]{rs} we have a natural equivalence
\begin{equation}\label{window add}
 {\rm DB}\big(\cY\times_{\bG_m}L^{\perp},w\big)_{[-qv,qv]}\cong \Kadd_{\bG_m\times \GSp(Q)}\big(\widetilde{\scrT},\chi,w\big).
\end{equation}
Since $\scrT$ is a partial tilting object in $\Qcoh _{\GSp(Q)}Y\times L^{\perp}$ by construction, $\widetilde{\scrT}$ is a partial $(\bG_m\times \GSp(Q),\GSp(Q))$-tilting bundle. If we set
\begin{gather*}
\Lambda\defeq\End_{\cY\times_{\bG_m}L^{\perp}}(\scrT)=\End_{Y\times L^{\perp}}(\scrT)^{\GSp(Q)},
\end{gather*}
then $\Lambda$ is a $\bG_m$-equivariant algebra over the affine quotient $\big(Y\times L^{\perp}\big)/\GSp(Q)$. Moreover, by the results in~\cite[Section~1.5]{svdb} and similar arguments as in~\cite[Section~5]{svdb}, we see that the algebra $\Lambda$ is a noncommutative resolution of $\big(Y\times L^{\perp}\big)/\GSp(Q)$.
By Theorem~\ref{tilt and fact}, we have an equivalence
\begin{gather}\label{tilt equiv}
\Kadd_{\bG_m\times \GSp(Q)}\big(\widetilde{\scrT},\chi,w\big)\simto\Dmod_{\bG_m}(\Lambda, \chi,w).
\end{gather}
Combining the equivalences \eqref{nc br}, \eqref{window}, \eqref{window add} and \eqref{tilt equiv} we have the following:

\begin{cor}\label{main cor}
Notation is the same as above. We have an exact equivalence
\begin{gather*}
\Db(\coh \cB_L)\cong \Dmod_{\bG_m}(\Lambda, \chi,w).
\end{gather*}
In particular, if $\Pf_q\rest$ is smooth, we have an equivalence
\begin{gather*}
\Db\big(\coh \Pf_q\rest\big)\cong \Dmod_{\bG_m}(\Lambda, \chi,w).
\end{gather*}
\end{cor}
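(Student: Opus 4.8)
The plan is to concatenate the chain of equivalences recorded in the paragraphs above. By the noncommutative homological projective duality of Rennemo--Segal \cite{rs} we have \eqref{nc br}, an equivalence $\Db(\coh\cB_L)\cong\DB\big(\cY^{\stab}\times_{\bG_m}L^{\perp},w\big)$; the window equivalence \eqref{window} rewrites the right-hand side as $\DB\big(\cY\times_{\bG_m}L^{\perp},w\big)_{[-qv,qv]}$; and \eqref{window add} identifies the latter with the homotopy category $\Kadd_{\bG_m\times\GSp(Q)}\big(\widetilde{\scrT},\chi,w\big)$, using the full faithfulness of the natural functor granted by Lemma~\ref{add ff}. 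Finally, \eqref{tilt equiv} --- which is the instance of Theorem~\ref{tilt and fact}(1) applied to the partial $(\bG_m\times\GSp(Q),\GSp(Q))$-tilting bundle $\widetilde{\scrT}$, valid because $\bG_m$ is reductive and $\Lambda=\End_{Y\times L^{\perp}}(\scrT)^{\GSp(Q)}$, being a noncommutative resolution of $\big(Y\times L^{\perp}\big)/\GSp(Q)$, is of finite global dimension --- supplies the equivalence $\Kadd_{\bG_m\times\GSp(Q)}\big(\widetilde{\scrT},\chi,w\big)\simto\Dmod_{\bG_m}(\Lambda,\chi,w)$.

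Composing \eqref{nc br}, \eqref{window}, \eqref{window add} and \eqref{tilt equiv} then yields $\Db(\coh\cB_L)\cong\Dmod_{\bG_m}(\Lambda,\chi,w)$, and the composite is exact because each of the four functors in the chain is an exact equivalence of triangulated categories. For the ``in particular'' statement, recall that when $L$ is generic with $\Pf_q\rest$ smooth the noncommutative Bertini theorem \cite{rsv} shows $\cB_L$ is a noncommutative resolution of $\Pf_q\rest$; since a noncommutative resolution of a smooth variety shares its bounded derived category, $\Db(\coh\cB_L)\cong\Db\big(\coh\Pf_q\rest\big)$, and combining this with the previous equivalence completes the proof.

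The substantive work has already been carried out upstream --- the Rennemo--Segal window equivalences and the tilting equivalence furnished by Theorem~\ref{tilt and fact} --- so at this stage the argument is essentially an assembly. The only point demanding genuine care, and hence what I would regard as the (modest) main obstacle, is verifying that the four equivalences are compatible along their shared intermediate categories and that each is exact, so that the composition is a well-defined exact equivalence; this is a matter of tracking the constructions of the B-brane category, the window subcategory, and the derived tilting functors through Section~\ref{derived section} and the present section.
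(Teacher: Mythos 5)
Your proposal is correct and is essentially the paper's own argument: the corollary is obtained by concatenating the equivalences \eqref{nc br}, \eqref{window}, \eqref{window add}, and \eqref{tilt equiv}, with the last one coming from Theorem~\ref{tilt and fact}(1) applied to the partial $(\bG_m\times\GSp(Q),\GSp(Q))$-tilting bundle $\widetilde{\scrT}$, and the ``in particular'' clause following from the identification $\Db(\coh\cB_L)\cong\Db\big(\coh\Pf_q\rest\big)$ when $\Pf_q\rest$ is smooth. You correctly identify both the required reductivity of $\bG_m=G/H$ and the finite global dimension of $\Lambda$ needed to invoke that theorem, so there is no gap.
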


\begin{rem}
Let $T\defeq\big\{\diag\big(z_1,\hdots,z_q,z_0z_1^{-1},\hdots,z_0z_q^{-1}\big)\mid z_i\in \bG_m\big\}\subset \GSp(Q)$ be a standard maximal torus of $\GSp(Q)$. Denote by $\chi_i\colon T\to \bG_m$ the character defined by
\begin{gather*}
(z_0,z_1,\hdots,z_q)\mapsto z_i.
\end{gather*}
Then the $T$-weights of $Y$ are $\{\chi_i,\chi_0-\chi_i\}_{1\leq i\leq q}$ with each weight occurring with multiplicity~$v$, and $T$-weights of $L^{\perp}$ is $-\chi_0$ with multiplicity $\dim L^{\perp}=c$. Thus, if $c=qv$, the sum of all $T$-weights of $Y$ is trivial, and in this case the category $\Db(\coh \cB_L)$ is Calabi--Yau of dimen\-sion~$2qs-1$.
\end{rem}
\appendix

\section{Algebras over groupoids in algebraic spaces}\label{appendix}

In this appendix, we prove a generalization of Proposition~\ref{quotient correspond}. We freely use the terminology and notation from~\cite{stacks}, and categories fibered in groupoids are defined over the big fppf site $(\Sch/S)_{\fppf}$ over a fixed base scheme $S$. First, we recall some definitions from~\cite{stacks}, where the main references are Chapters {\it Groupoids in Algebraic Spaces, Algebraic Stacks}, and {\it Sheaves on Algebraic Stacks}.

Let $p\colon \cX\to (\Sch/S)_{\fppf}$ be a category fibered in groupoids. Recall that we have the induced fppf topology on $\cX$, where a family of morphisms $\{x_i\to x\}_{i}$ in $\cX$ is a {\it covering} of $x\in \cX$ if the family $\{ p(x_i)\to p(x)\}_i$ is an fppf cover of the scheme $p(x)$. We write $\cX_{\fppf}$ for this fppf site. Then we have the {\it structure sheaf} $\cO_{\cX}\colon \cX_{\fppf}^{\op}\to (\Rings)$ of $\cX_{\fppf}$ defined by
\begin{gather*}
\cO_{\cX}(x)\defeq\Gamma\big(p(x),\cO_{p(x)}\big),
\end{gather*}
and thus we have the associated ringed site $\big(\cX_{\fppf},\cO_{\cX}\big)$. An $\cO_{\cX}$-module $\cF\colon\cX_{\fppf}^{\op}\to (\Sets)$ is said to be {\it quasi-coherent}, if for any $x\in \cX$ there is a covering $\{x_i\to x\}$ of $x$ such that for each~$x_i$ there is an exact sequence of $\cO_{\cX/x_i}$-modules
\begin{gather*}
\bigoplus_{I}\cO_{\cX/x_i}\to \bigoplus_{J}\cO_{\cX/x_i}\to\cF|_{x_i}\to 0,
\end{gather*}
 where $\big(\cX_{\fppf}/x_i,\cO_{\cX/x_i}\big)$ is the localization of the ringed site $\big(\cX_{\fppf},\cO_{\cX}\big)$ at $x_i\in\cX$ and $\cF|_{x_i}$ is the restriction of $\cF$ to $\big(\cX_{\fppf}/x_i,\cO_{\cX/x_i}\big)$. We denote by $\Qcoh \cX$ the category of quasi-coherent $\cO_{\cX}$-modules. If $\cX$ is an algebraic stack, $\Qcoh \cX$ is equivalent to the category of quasi-coherent sheaves on the lisse-\'etale site of $\cX$ defined in~\cite[Definition 9.1.14]{olsson}. In particular, if $\cX$ is an algebraic space, $\Qcoh \cX$ is equivalent to the category of quasi-coherent sheaves on the small \'etale site of $\cX$ defined in~\cite[Definition 7.1.5]{olsson}, and if $\cX$ is a scheme, it is equivalent to the category of usual quasi-coherent sheaves on the small Zariski site of $\cX$. If $X$ is an algebraic space, we write $X_{\et}$ (resp.\ $X_{\fppf}$) for the small \'etale site of $X$ (resp.\ the big fppf site $(\Sch/X)_{\fppf}$ of $X$), and sheaves on $X$ means sheaves on $X_{\et}$. Similarly, sheaves on a scheme $X$ means sheaves on the small Zariski site of $X$.

 \begin{dfn} An {\it ${\cX}$-algebra} is a sheaf of (not necessarily commutative) rings $\cA$ on $\cX_{\fppf}$ together with a morphism of sheaves of rings
 \begin{gather*}
 \uprho\colon\ \cO_{\cX}\to \cA
 \end{gather*}
 such that the $\cO_{\cX}$-module $\cA$ is quasi-coherent and the image of $\uprho$ is in the center of $\cA$, i.e., for any $x\in \cX$ the image of $\uprho(x)\colon \cO_{\cX}(x)\to \cA(x)$ is contained in the center of the ring $\cA(x)$.
\end{dfn}

 If $(\cA,\uprho)$ is an $\cX$-algebra, we have the restriction by $\uprho$
 \begin{gather*}
 (-)_{\uprho}\colon\quad \Mod\cA\to \Mod\cO_{\cX}, \qquad \cM\mapsto \cM_{\uprho},
 \end{gather*}
 where $ \Mod\cA$ denotes the category of right $\cA$-modules.
 \begin{dfn} A right $\cA$-module $\cM$ is said to be {\it quasi-coherent} if the $\cO_{\cX}$-module $\cM_{\uprho}$ is quasi-coherent. We denote by
 \begin{gather*}
 \Qcoh\cA
 \end{gather*}
 the full subcategory of $\Mod\cA$ consisting of quasi-coherent right $\cA$-modules.
 \end{dfn}

 \begin{rem}
 If $\cX$ is an algebraic space, we tacitly consider $\cX$-algebras and quasi-coherent modules over $\cX$-algebras as sheaves on the small \'etale site $\cX_{\et}$ instead of big fppf sheaves as above.
 \end{rem}

Let $\cA$ be an $\cX$-algebra, and denote by $\widetilde{\cX}$ the stackification of $\cX$. By~\cite[Lemma~12.1; 06WQ]{stacks}, we have an equivalence of topoi
\begin{equation}\label{equiv topoi}
\Sh(\cX_{\fppf})\simto \Sh\big(\widetilde{\cX}_{\fppf}\big),
\end{equation}
where $\Sh(-)$ denotes the category of sheaves on a site $(-)$. For an object $\cF\in \Sh(\cX_{\fppf})$, we denote by $\widetilde{\cF}\in\Sh\big(\widetilde{\cX}_{\fppf}\big)$ the image of $\cF$ by the equivalence \eqref{equiv topoi}.
By this equivalence the ring object $\cA\in\Sh(\cX_{\fppf})$ defines a ring object $\widetilde{\cA}\in\Sh\big(\widetilde{\cX}_{\fppf}\big)$ and a morphism
\begin{gather*}
\widetilde{\uprho}\colon\ \cO_{\widetilde{\cX}}\to \widetilde{\cA}.
\end{gather*}
By~\cite[Lemma~12.2; 06WR]{stacks}, the sheaf of rings $\widetilde{\cA}$ is quasi-coherent $\cO_{\widetilde{\cX}}$-module, and so $\widetilde{\cA}$ is an~$\widetilde{\cX}$-algebra.

\begin{lem}\label{stackification equiv}
The equivalence of topoi $\Sh(\cX_{\fppf})\simto \Sh\big(\widetilde{\cX}_{\fppf}\big)$ induces the equivalence
\begin{gather*}
\Mod\cA\simto \Mod\widetilde{\cA}
\end{gather*}
of right modules, and it restricts to the equivalence of quasi-coherent modules
\begin{gather*}
\Qcoh\cA\simto \Qcoh\widetilde{\cA}.
\end{gather*}
\begin{proof}
The first assertion is obvious, and for the second statement it is enough to show that a right $\cA$-module $\cM$ is quasi-coherent if and only if the right $\widetilde{\cA}$-module $\widetilde{\cM}$ is quasi-coherent. But this follows from the following commutative diagram
\[
\begin{tikzcd}
\Mod\cA\arrow[rr,"\widetilde{(-)}"]\arrow[d,"(-)_{\uprho}"']&&\Mod\widetilde{\cA}\arrow[d," (-)_{\widetilde{\uprho}}"]\\
\Mod\cO_{\cX}\arrow[rr, "\widetilde{(-)}"]&&\Mod\cO_{\widetilde{\cX}}
\end{tikzcd}
\]
and~\cite[Lemma~12.2; 06WR]{stacks}.
\end{proof}
\end{lem}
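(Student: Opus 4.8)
The plan is to observe that everything in sight is an \emph{internal} construction, so it is transported automatically by the equivalence of topoi. First I would spell out that a right $\cA$-module is nothing but an object $\cM\in\Sh(\cX_\fppf)$ equipped with an action morphism $\cM\times\cA\to\cM$ satisfying the associativity and unit axioms, all of which are expressed by commuting diagrams of morphisms of sheaves. An equivalence of categories preserves finite products and the commutativity of diagrams, and, by the discussion preceding the lemma, the equivalence $\Sh(\cX_\fppf)\simto\Sh(\widetilde\cX_\fppf)$ sends the ring object $(\cA,\uprho)$ to the ring object $(\widetilde\cA,\widetilde\uprho)$. Hence it carries the category of right $\cA$-modules onto the category of right $\widetilde\cA$-modules, giving the first equivalence $\Mod\cA\simto\Mod\widetilde\cA$; this is the ``obvious'' part.

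Next I would check that this equivalence is compatible with restriction of scalars, i.e.\ that the square with horizontal arrows $\widetilde{(-)}$ and vertical arrows $(-)_\uprho$, $(-)_{\widetilde\uprho}$ commutes. Indeed $(-)_\uprho$ is again an internal operation --- it just remembers the $\cO_\cX$-action obtained by composing the $\cA$-action with $\uprho$ --- so it is preserved by the equivalence of topoi, which on structure sheaves and their modules is exactly the equivalence used in \cite[Lemma~12.2; 06WR]{stacks}. Consequently, for $\cM\in\Mod\cA$ there is a natural isomorphism $\widetilde{\cM_\uprho}\cong(\widetilde\cM)_{\widetilde\uprho}$ in $\Mod\cO_{\widetilde\cX}$.

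Finally I would deduce the statement about quasi-coherence. By definition $\cM$ is a quasi-coherent $\cA$-module exactly when $\cM_\uprho$ is a quasi-coherent $\cO_\cX$-module, and likewise $\widetilde\cM$ is quasi-coherent over $\widetilde\cA$ exactly when $(\widetilde\cM)_{\widetilde\uprho}$ is quasi-coherent over $\cO_{\widetilde\cX}$. Since \cite[Lemma~12.2; 06WR]{stacks} says precisely that $\widetilde{(-)}$ restricts to an equivalence $\Qcoh\cO_\cX\simto\Qcoh\cO_{\widetilde\cX}$, the isomorphism $\widetilde{\cM_\uprho}\cong(\widetilde\cM)_{\widetilde\uprho}$ shows that $\cM_\uprho$ is quasi-coherent if and only if $(\widetilde\cM)_{\widetilde\uprho}$ is, i.e.\ that $\cM\in\Qcoh\cA$ if and only if $\widetilde\cM\in\Qcoh\widetilde\cA$. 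Hence the equivalence $\Mod\cA\simto\Mod\widetilde\cA$ restricts to an equivalence $\Qcoh\cA\simto\Qcoh\widetilde\cA$. I do not expect any genuine obstacle here: the only point needing a line of care is the commutativity of the square of functors, and that is immediate once one notes that both $\widetilde{(-)}$ and $(-)_\uprho$ are induced by constructions that make sense in an arbitrary topos and are therefore respected by an equivalence of topoi.
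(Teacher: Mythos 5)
Your proposal is correct and follows essentially the same route as the paper: you reduce the quasi-coherence claim to the commutativity of the square of restriction-of-scalars functors together with \cite[Lemma~12.2; 06WR]{stacks} for the structure sheaves, exactly as the paper does, only filling in the explanation of why the square commutes and why the first equivalence is formal.
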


Let $\cG=(U,R,s,t, c)$ be a groupoid in algebraic spaces over $S$ (see~\cite[Definition 11.1; 043W]{stacks} for the notation).

\begin{dfn}
 An {\it algebra over $\cG$}, or {\it $\cG$-algebra}, is a $U$-algebra $\cA\colon U_{\et}^{\op}\to (\Rings)$ together with an isomorphism
\begin{gather*}
\uptheta^{\cA}\colon\ s^*\cA\simto t^*\cA
\end{gather*}
of $R$-algebras such that the equations
\begin{equation}\label{groupoid alg}
p_1^*\uptheta^{\cA}\circ p_2^*\uptheta^{\cA}=c^*\uptheta^{\cA}
\qquad\text{and}\qquad
e^*\uptheta^{\cA}=\id_{\cA}
\end{equation}
of morphisms of sheaves of rings hold, where $p_i\colon R\times_{s,U,t}R \to R$ is the $i$-th projection and $e\colon U\to R$ is the identity.
\end{dfn}

\begin{rem}\label{str equiv}
Note that the structure sheaf $\cO_U$ together with the canonical isomorphism
\begin{gather*}
\uptheta^{\can}\colon\ s^*\cO_U\simto t^*\cO_U
\end{gather*}
defines a $\cG$-algebra $(\cO_U, \uptheta^{\can})$. If $\big(\cA,\uptheta^{\cA}\big)$ is a $\cG$-algebra, since $\uptheta^{\cA}$ is $\cO_R$-linear, the diagram
\[
\begin{tikzcd}
s^*\cO_U\arrow[rr,"\sim"]\arrow[d,"s^*\uprho"']\arrow[rrrr, bend left=20,swap, "\uptheta^{\can}"']&&\cO_R\arrow[lld," "]\arrow[rrd," "]\arrow[rr,"\sim"]&&t^*\cO_U\arrow[d,"t^*\uprho"]\\
s^*\cA\arrow[rrrr, "\uptheta^{\cA}"]&&&&t^*\cA
\end{tikzcd}
\]
is commutative, where the isomorphisms on the top level are natural isomorphisms.
\end{rem}

\begin{dfn} Notation is the same as above.
\begin{itemize}\itemsep=0pt
\item[1.] A {\it quasi-coherent module} over a $\cG$-algebra $\big(\cA,\uptheta^{\cA}\big)$ is a quasi-coherent right $\cA$-module $\cM\colon U^{\op}_{\et}\to (\Sets)$ together with an isomorphism
\begin{gather*}
\uptheta^{\cM}\colon\ s^*\cM\simto t^*\cM
\end{gather*}
of quasi-coherent right $s^*\cA$-modules such that the similar equations as \eqref{groupoid alg} hold, where the right $t^*\cA$-module $t^*\cM$ is considered as a right $s^*\cA$-module via the isomorphism $\uptheta^{\cA}\colon s^*\cA\simto t^*\cA$ of $R$-algebras.
\item[2.] Let $\big(\cM,\uptheta^{\cM}\big)$ and $\big(\cN,\uptheta^{\cN}\big)$ be quasi-coherent modules over a $\cG$-algebra $\big(\cA,\uptheta^{\cA}\big)$.
A {\it morphism} from $\big(\cM,\uptheta^{\cM}\big)$ to $\big(\cN,\uptheta^{\cN}\big)$ is a morphism
\begin{gather*}
\varphi\colon\ \cM\to \cN
\end{gather*}
of right $\cA$-modules such that the diagram
\[
\begin{tikzcd}
s^*\cM\arrow[rr,"s^*\varphi"]\arrow[d,"\uptheta^{\cM}"']&&s^*\cN\arrow[d," \uptheta^{\cN}"]\\
t^*\cM\arrow[rr, "t^*\varphi"]&&t^*\cN
\end{tikzcd}
\]
is commutative.
\end{itemize}
\end{dfn}

We denote by
\begin{gather*}
\Qcoh\big(\cA,\uptheta^{\cA}\big)
\end{gather*}
the category of quasi-coherent modules over a $\cG$-algebra $\big(\cA,\uptheta^{\cA}\big)$.

\begin{rem}
We have a natural identification
\begin{gather*}
\Qcoh\big(\cO_U,\uptheta^{\can}\big)=\Qcoh(U,R,s,t,c),
\end{gather*}
where $\Qcoh(U,R,s,t,c)$ denotes the category of quasi-coherent modules on $(U,R,s,t,c)$ in the sense of~\cite[Definition 12.1; 0441]{stacks}.
\end{rem}

Let $\cG=(U,R,s,t, c)$ be a groupoid in algebraic spaces over $S$. Following~\cite[044O]{stacks} we denote by $[U/_pR]$ the category fibered in groupoids associated to $\cG$, which is defined as follows: An object of $[U/_pR]$ is a pair $(T,u)$ of $S$-scheme $T$ and a morphism $u\colon T\to U$ of algebraic spaces over $S$, and a morphism $(f,\varphi)\colon(T,u)\to (T',u')$ is defined to be a pair of a morphism $f\colon T\to T'$ of $S$-schemes and a morphism $\varphi\colon T\to R$ of algebraic spaces over $S$ such that $s\circ \varphi=u$ and $t\circ \varphi=u'\circ f$. Then the functor
\begin{gather*}
[U/_pR]\to (\Sch/S), \qquad (T,u)\mapsto T
\end{gather*}
is a category fibered in groupoids.
We denote by $[U/R]$ the stackification of $[U/_pR]$, which is called the {\it quotient stack} of $\cG$. We have the natural morphism
\begin{equation}\label{atlas}
\alpha\colon\ (\Sch/U)\to [U/_pR]
\end{equation}
defined by $\alpha(u\colon T\to U)\defeq(T,u)$ and $\alpha(f\colon T\to T')\defeq(f, e\circ u)$, and it defines the functor
\begin{gather*}
\alpha^{-1}\colon\ \Sh\big([U/_pR]_{\fppf}\big)\to \Sh(U_{\fppf})
\end{gather*}
defined by $\alpha^{-1}\cF(u\colon T\to U)\defeq \cF(\alpha(u\colon T\to U))=\cF(T,u)$.

For a $\cG$-algebra $\big(\cA,\uptheta^{\cA}\big)$, we define a sheaf of rings
\begin{gather*}
[\cA/_p R]\colon\ [U/_pR]_{\fppf}^{\op}\to (\Rings)
\end{gather*}
on $[U/_pR]_{\fppf}$ as follows:
For an object $(T,u)\in [U/_pR]$ we set
\begin{gather*}
[\cA/_p R](T,u)\defeq\Gamma(T,u^*\cA),
\end{gather*}
and for a morphism $(f,\varphi)\colon (T,u)\to (T',u')$ we define a morphism of rings
\begin{gather*}
[\cA/_p R](f,\varphi)\colon\ [\cA/_p R](T',u')\to [\cA/_p R](T,u)
\end{gather*}
by the following commutative diagram:
\[
\begin{tikzcd}
\Gamma(T',u'^*\cA)\arrow[rrr,"{[\cA/_p R](f,\varphi)}"]\arrow[d,"f^*"']&&&\Gamma(T,u^*\cA)\\
\Gamma(T,f^*u'^*\cA)\arrow[r, "\sim"]&\Gamma(T,\varphi^*t^*\cA)\arrow[rr,"\varphi^*{\uptheta^{\cA}}"]&&\Gamma(T,\varphi^*s^*\cA)\arrow[u, "\sim"' labl].
\end{tikzcd}
\]

 Similarly, for a quasi-coherent module $\big(\cM,\uptheta^{\cM}\big)$ over a $\cG$-algebra $\big(\cA,\uptheta^{\cA}\big)$, we define the right $[\cA/_pR]$-module
\begin{gather*}
[\cM/_pR]\colon\ [U/_pR]_{\fppf}^{\op}\to (\Sets)
\end{gather*}
by $[\cM/_p R](T,u)\defeq\Gamma(T,u^*\cM)$, and this defines an additive functor
\begin{equation}\label{quot functor}
[(-)/_pR]\colon\ \Qcoh\big(\cA,\uptheta^{\cA}\big)\to \Mod[\cA/_pR].
\end{equation}
By the equality $[\cO_U/_pR]=\cO_{[U/_pR]}$ and Remark~\ref{str equiv}, the morphism $\uprho\colon \cO_U\to \cA$ defines a morphism from $(\cO_U,\uptheta^{\can})$ to $\big(\cA,\uptheta^{\cA}\big)$ in $\Qcoh\big(\cA,\uptheta^{\cA}\big)$, and thus we have the induced morphism
\begin{gather*}
[\uprho/_pR]\colon\ \cO_{[U/_pR]}\to [\cA/_pR]
\end{gather*}
of sheaves of rings.

\begin{lem}\label{qcoh qcoh} Notation is the same as above.
\begin{itemize}\itemsep=0pt
\item[$1.$] If $(\cM,\uptheta^{\can})\in \Qcoh(\cO_U,\uptheta^{\can})$, then $[\cM/_pR]\in \Qcoh[U/_pR]$.
\item[$2.$] The sheaf $[\cA/_pR]$ together with the ring homomorphism $[\uprho/_pR]$ is a $[U/_pR]$-algebra.
\item[$3.$] If $\big(\cM,\uptheta^{\cM}\big)\in \Qcoh\big(\cA,\uptheta^{\cA}\big)$, the right $[\cA/_pR]$-module $[\cM/_pR]$ is quasi-coherent.
\end{itemize}
\begin{proof}
(1) Let $\{a_i\colon A_i\to U\}_{i\in I}$ be an affine \'etale covering of $U$. Since $\cM$ is quasi-coherent $\cO_U$-module, for each $i\in I$ there is an exact sequence
\begin{equation}\label{qcoh exact}
\bigoplus_{J}\cO_{A_i}\to \bigoplus_{K}\cO_{A_i}\to a_i^*\cM\to 0.
\end{equation}
For any object $(T,u)\in [U/_pR]$, set $T_i\defeq A_i\times_{U}T$ and write $t_i\colon T_i\to T$ for the second projection. If we set $u_i\defeq u\circ t_i\colon T_i\to U$ and $\varphi_i\defeq e\circ u_i\colon T_i\to R$, then
\begin{gather*}
\bigl\{(t_i,\varphi_i)\colon (T_i, u_i)\to (T,u)\bigr\}_{i\in I}
\end{gather*}
is a covering of $(T,u)\in [U/_pR]_{\fppf}$. Since the localized ringed site
\begin{gather*}
\big([U/_pR]_{\fppf}/(T_i,u_i),\cO_{[U/_pR]_{\fppf}/(T_i,u_i)}\big)
\end{gather*}
is equivalent to the ringed site $\big(({\Sch/T_i})_{\fppf},\cO_{({\Sch/T_i})_{\fppf}}\big)$ by~\cite[Lemma~9.4; 06W9]{stacks}, the pull-back of the exact sequence \eqref{qcoh exact} by the first projection $T_i\to A_i$ gives rise to an exact sequence
\begin{gather*}
\bigoplus_{J}\cO_{[U/_pR]_{\fppf}/(T_i,u_i)}\to \bigoplus_{K}\cO_{[U/_pR]_{\fppf}/(T_i,u_i)}\to \cM|_{(T_i,u_i)}\to 0.
\end{gather*}

(2) Since the image of $\uprho\colon \cO_U\to \cA$ is contained in the center of $\cA$, the image of \begin{gather*}[\uprho/_pR]\colon \cO_{[U/_pR]}\to [\cA/_pR]\end{gather*} is contained in the center of $[\cA/_pR]$. The $\cO_{[U/_pR]}$-module $[\cA/_pR]$ is quasi-coherent by (1).

(3) This follows from (1).
\end{proof}
\end{lem}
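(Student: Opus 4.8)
The plan is to handle the three parts in decreasing order of difficulty, reducing (2) and (3) to (1), so that the real content is (1). For part~(1) I would use that quasi-coherence of an $\cO_{[U/_pR]}$-module is a local condition on $[U/_pR]_{\fppf}$, so it suffices to exhibit, for each object $(T,u)\in[U/_pR]$, an fppf covering over each member of which $[\cM/_pR]$ admits a two-term free presentation. First fix an affine \'etale covering $\{a_i\colon A_i\to U\}_{i\in I}$; since the underlying $\cO_U$-module of $\cM$ is quasi-coherent on the algebraic space $U$, after refining we may assume each $a_i^*\cM$ fits in an exact sequence $\bigoplus_J\cO_{A_i}\to\bigoplus_K\cO_{A_i}\to a_i^*\cM\to0$. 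Given $(T,u)$, set $T_i\defeq A_i\times_UT$ with second projection $t_i\colon T_i\to T$, and put $u_i\defeq u\circ t_i$ and $\varphi_i\defeq e\circ u_i$; then $\{(t_i,\varphi_i)\colon(T_i,u_i)\to(T,u)\}_{i\in I}$ is a covering of $(T,u)$ in $[U/_pR]_{\fppf}$ since $\{T_i\to T\}$ is \'etale surjective. The decisive step is to identify, via~\cite[Lemma~9.4; 06W9]{stacks}, the localized ringed site $\big([U/_pR]_{\fppf}/(T_i,u_i),\cO\big)$ with $\big((\Sch/T_i)_{\fppf},\cO\big)$, and to check that under this identification $[\cM/_pR]|_{(T_i,u_i)}$ becomes the pullback to $T_i$ of $a_i^*\cM$ along the first projection $T_i\to A_i$; pulling back the chosen presentation then gives the required local free presentation.

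For part~(2), centrality of the image of $[\uprho/_pR]\colon\cO_{[U/_pR]}\to[\cA/_pR]$ is checked on sections: over $(T,u)$ the map is $u^*$ composed with the sectionwise map induced by $\uprho$, and since $\uprho(x)$ has central image in $\cA(x)$ for every $x$, the same holds after applying $u^*$ and $\Gamma(T,-)$. Quasi-coherence of the underlying $\cO_{[U/_pR]}$-module of $[\cA/_pR]$ follows from part~(1): the equations~\eqref{groupoid alg} say precisely that $(\cA_\uprho,\uptheta^{\cA})$ is a quasi-coherent module over the $\cG$-algebra $(\cO_U,\uptheta^{\can})$ --- the required $\cO_R$-linearity of $\uptheta^{\cA}$ being Remark~\ref{str equiv} --- and $[\cA/_pR]$ has underlying $\cO$-module $[\cA_\uprho/_pR]$, so part~(1) applies (nothing in its proof used that the equivariant structure is canonical). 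Part~(3) is the same observation for modules: restriction of scalars along $[\uprho/_pR]$ commutes with $[(-)/_pR]$, the underlying $\cO_{[U/_pR]}$-module of $[\cM/_pR]$ is canonically $[\cM_\uprho/_pR]$, and $(\cM_\uprho,\uptheta^{\cM})$ lies in $\Qcoh(\cO_U,\uptheta^{\can})$ because $\uptheta^{\cM}$ satisfies the module analogue of~\eqref{groupoid alg}; hence it is quasi-coherent by part~(1).

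The main obstacle is the bookkeeping in part~(1): exhibiting the covering $\{(t_i,\varphi_i)\}$ inside $[U/_pR]_{\fppf}$ and, above all, matching the restriction of $[\cM/_pR]$ to the localized site with the pullback of $a_i^*\cM$, which is exactly where the explicit Stacks Project description of localization of $[U/_pR]_{\fppf}$ is needed. Everything else is formal.
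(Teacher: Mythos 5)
Your proposal follows essentially the same route as the paper: for (1) you pass to an affine étale cover $\{A_i\to U\}$, pull back the free presentation of $a_i^*\cM$, form the cover $\{(T_i,u_i)\to(T,u)\}$ in $[U/_pR]_{\fppf}$, and invoke the identification of the localized ringed site with $(\Sch/T_i)_{\fppf}$ via \cite[Lemma~9.4; 06W9]{stacks}; then (2) and (3) are reduced to (1) exactly as in the paper. The only (harmless) difference is that you spell out more explicitly why the underlying $\cO_{[U/_pR]}$-modules of $[\cA/_pR]$ and $[\cM/_pR]$ are of the form covered by (1), and the phrase ``after refining'' is unnecessary since $A_i$ is already affine, so $a_i^*\cM$ admits a global free presentation.
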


By Lemma~\ref{qcoh qcoh}, the functor \eqref{quot functor} defines an additive functor
\begin{gather*}
[(-)/_pR]\colon\ \Qcoh\big(\cA,\uptheta^{\cA}\big)\to \Qcoh[\cA/_pR].
\end{gather*}
\begin{prop}\label{quot equiv}
The functor $[(-)/_pR]\colon \Qcoh\big(\cA,\uptheta^{\cA}\big)\to \Qcoh[\cA/_pR]$ is an equivalence.
\begin{proof}
This follows from a similar argument as in the proof of~\cite[Proposition~13.1; 06WT]{stacks}. We construct a quasi-inverse
of the functor $[(-)/_pR]$ as follows.

First, for a sheaf $\cF$ on $[U/_pR]_{\fppf}$, we define a sheaf $\cF^R$ on the small \'etale site $U_{\et}$ by
\begin{gather*}
\cF^R\defeq\big(\alpha^{-1}\cF\big)|_{U_{\et}},
\end{gather*}
where $\alpha\colon (\Sch/U)\to [U/_pR]$ is the morphism in \eqref{atlas}. By construction we have $\cO_{[U/_pR]}^R=\cO_{U_{\et}}$. For each \'etale morphism $(u\colon T\to U)\in U_{\et}$, we have
\begin{gather*}
[\cA/_pR]^R(u\colon T\to U)=\Gamma(T,u^*\cA)=\cA(u\colon T\to U).
\end{gather*}
 Thus we have a natural identification $[\cA/_pR]^R=\cA$, and so if $\cM$ is a quasi-coherent right $[\cA/_pR]$-module, the small \'etale sheaf $\cM^R$ is a quasi-coherent $\cA$-module.

Next, we define a category $\{ U/_pR\}$ containing the category $[U/_pR]$ by replacing $S$-sche\-mes~$T$ in the definition of $[U/_pR]$ with algebraic spaces over $S$. If $(f,\varphi)\colon (W,u)\to(W',u')$ is a~morphism in $\{U/_pR\}$, this morphism induces a natural (functorial) commutative diagram
\[
\begin{tikzcd}
W \arrow[rr,"f"]\arrow[rd,"\alpha\circ u"']&&W'\arrow[ld," \alpha\circ u'"]\\
&\left[U/_pR\right].&
\end{tikzcd}
\]
Then, for any quasi-coherent right $[\cA/_pR]$-module $\cM$, we have the composition
\begin{gather*}
(\alpha\circ u')^{-1}\cM\to f_*f^{-1}\big((\alpha\circ u')^{-1}\cM\big)\simto f_*(\alpha\circ u)^{-1}\cM,
\end{gather*}
where the first morphism is the adjunction morphism and the second isomorphism follows from the above commutative diagram. By the adjunction $f^{-1}\dashv f_*$, this composition map induces the morphism
$f^{-1}(\alpha\circ u')^{-1}\cM\to (\alpha\circ u)^{-1}\cM$
and restricting this morphism to $W_{\et}$, we have the comparison map
\begin{gather*}
c_{(f,\varphi)}\colon\ f^*u'^*\cM^R\to u^*\cM^R
\end{gather*}
and by~\cite[Lemma~11.6; 06WK]{stacks} this is an isomorphism.

Finally, we define a functor $F\colon \Qcoh[\cA/_pR]\to \Qcoh\big(\cA,\uptheta^{\cA}\big)$ by
\begin{gather*}
F(\cM)\defeq\big(\cM^R, \uptheta^{\cM^R}\big),
\end{gather*}
where $\uptheta^{\cM^R}\colon s^*\cM^R\simto t^*\cM$ is defined to be the inverse of the comparison map
\begin{gather*}
c_{(\id_R,\id_R)}\colon\ t^*\cM^R\simto s^*\cM^R
\end{gather*}
induced by the morphism $(\id_R,\id_R)\colon (R,s)\to (R,t)$ in $\{U/_pR\}$. By construction, $F$ is a quasi-inverse of the functor $[(-)/_pR]$.
\end{proof}
\end{prop}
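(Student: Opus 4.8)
The plan is to adapt the Stacks Project proof that quasi-coherent modules on a groupoid in algebraic spaces agree with quasi-coherent sheaves on the associated quotient stack (\cite[Proposition~13.1; 06WT]{stacks}) to the present setting of modules over the $\cG$-algebra $\big(\cA,\uptheta^{\cA}\big)$. Concretely, I would construct an explicit functor $F\colon \Qcoh[\cA/_pR]\to \Qcoh\big(\cA,\uptheta^{\cA}\big)$ and verify that it is a quasi-inverse of $[(-)/_pR]$.

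First I would produce the underlying $\cA$-module of $F(\cM)$. Using the atlas morphism $\alpha\colon (\Sch/U)\to [U/_pR]$ of~\eqref{atlas}, for a sheaf $\cF$ on $[U/_pR]_{\fppf}$ set $\cF^R\defeq\big(\alpha^{-1}\cF\big)|_{U_{\et}}$. Unwinding the definitions gives $\cO_{[U/_pR]}^R=\cO_{U_{\et}}$ and, crucially, a natural identification $[\cA/_pR]^R=\cA$ of $U$-algebras, since by definition $[\cA/_pR](u\colon T\to U)=\Gamma(T,u^*\cA)$ restricts on \'etale opens to $\cA$. Hence for $\cM\in\Qcoh[\cA/_pR]$ the small \'etale sheaf $\cM^R$ is a quasi-coherent right $\cA$-module.

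Next I would equip $\cM^R$ with a $\cG$-equivariant structure $\uptheta^{\cM^R}\colon s^*\cM^R\simto t^*\cM^R$. For this I would enlarge $[U/_pR]$ to a category $\{U/_pR\}$ whose test objects are algebraic spaces over $S$; every morphism $(f,\varphi)\colon (W,u)\to (W',u')$ in $\{U/_pR\}$ fits into a triangle over $[U/_pR]$ commuting up to $2$-isomorphism via $\alpha\circ u$ and $\alpha\circ u'$, which by adjunction yields a comparison map $c_{(f,\varphi)}\colon f^*u'^*\cM^R\to u^*\cM^R$, an isomorphism by~\cite[Lemma~11.6; 06WK]{stacks}. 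Applying this to $(\id_R,\id_R)\colon (R,s)\to (R,t)$ gives $c_{(\id_R,\id_R)}\colon t^*\cM^R\simto s^*\cM^R$, and I would set $\uptheta^{\cM^R}$ to be its inverse. The two cocycle identities analogous to~\eqref{groupoid alg} then follow from the functoriality of the comparison maps applied to $p_1,p_2,c$ and to $e$, and the fact that $\uptheta^{\cM^R}$ is $s^*\cA$-linear — with the $s^*\cA$-structure on $t^*\cM^R$ twisted by $\uptheta^{\cA}$ — follows because the comparison maps are maps of modules over the relevant pullbacks of $[\cA/_pR]$, which restrict to $\cA$ twisted by $\uptheta^{\cA}$. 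This defines $F(\cM)\defeq\big(\cM^R,\uptheta^{\cM^R}\big)$, functorially in $\cM$.

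Finally I would check that $F$ is a quasi-inverse. For $\big(\cM,\uptheta^{\cM}\big)\in\Qcoh\big(\cA,\uptheta^{\cA}\big)$ one has $[\cM/_pR]^R=\cM$ tautologically, and running the comparison-map construction on $[\cM/_pR]$ recovers $\uptheta^{\cM}$, so $F\circ[(-)/_pR]\cong\id$. Conversely, a quasi-coherent $[\cA/_pR]$-module is determined by its restriction to the atlas together with descent data along $s,t\colon R\to U$, and that descent datum is exactly the equivariant structure $\uptheta^{\cM^R}$, giving $[(-)/_pR]\circ F\cong\id$; this is the module-theoretic analogue of the descent statement in the proof of~\cite[Proposition~13.1; 06WT]{stacks}. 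I expect the main obstacle to be the bookkeeping around the comparison maps — specifically, checking that they are compatible with the algebra twist $\uptheta^{\cA}$, so that $\uptheta^{\cM^R}$ is genuinely $s^*\cA$-linear and satisfies the cocycle condition, rather than merely $\cO$-linear. The quasi-coherence of all the sheaves that appear is already handled by Lemma~\ref{qcoh qcoh} together with the cited Stacks Project localization lemmas.
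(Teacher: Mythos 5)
Your proposal matches the paper's proof essentially step for step: the same definition of $\cF^R$ via the atlas $\alpha$, the same identification $[\cA/_pR]^R=\cA$, the same enlarged category $\{U/_pR\}$ and comparison maps $c_{(f,\varphi)}$, the same use of $c_{(\id_R,\id_R)}$ to produce $\uptheta^{\cM^R}$, and the same conclusion that this yields a quasi-inverse. The extra remarks you add about checking $s^*\cA$-linearity and the cocycle conditions are sound and fill in exactly the bookkeeping the paper leaves implicit.
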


Write
\begin{gather*}
[\cA/R]\defeq\widetilde{[\cA/_pR]}
\end{gather*}
for the sheaf of rings on the quotient stack $[U/R]$ corresponding to $[\cA/_pR]$ via the equivalence~\eqref{equiv topoi}. By Lemma~\ref{stackification equiv} and Proposition~\ref{quot equiv}, we have the following
 generalization of~\cite[Proposition~13.1; 06WT]{stacks}.

\begin{prop}\label{main app}
We have an equivalence
\begin{gather*}
\Qcoh\big(\cA,\uptheta^{\cA}\big)\simto \Qcoh[\cA/R].
\end{gather*}
\end{prop}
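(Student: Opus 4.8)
The plan is to obtain the stated equivalence simply by composing the two equivalences established immediately above. First, Proposition~\ref{quot equiv} already provides an equivalence
\begin{gather*}
[(-)/_pR]\colon\ \Qcoh\big(\cA,\uptheta^{\cA}\big)\simto \Qcoh[\cA/_pR]
\end{gather*}
between quasi-coherent modules over the $\cG$-algebra $\big(\cA,\uptheta^{\cA}\big)$ and quasi-coherent modules over the $[U/_pR]$-algebra $[\cA/_pR]$, the latter being an $[U/_pR]$-algebra by Lemma~\ref{qcoh qcoh}(2). So it remains to identify $\Qcoh[\cA/_pR]$ with $\Qcoh[\cA/R]$.

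For this I would invoke Lemma~\ref{stackification equiv} with the category fibered in groupoids taken to be $\cX\defeq[U/_pR]$, whose stackification $\widetilde{\cX}$ is by definition the quotient stack $[U/R]$, and with the $\cX$-algebra taken to be $[\cA/_pR]$. By the very definition of $[\cA/R]$ as the sheaf of rings on $[U/R]$ corresponding to $[\cA/_pR]$ under the equivalence of topoi \eqref{equiv topoi}, we have $\widetilde{[\cA/_pR]}=[\cA/R]$. Lemma~\ref{stackification equiv} then yields an equivalence
\begin{gather*}
\Qcoh[\cA/_pR]\simto \Qcoh[\cA/R],
\end{gather*}
and composing this with the equivalence from Proposition~\ref{quot equiv} gives the desired equivalence $\Qcoh\big(\cA,\uptheta^{\cA}\big)\simto \Qcoh[\cA/R]$.

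I do not expect a real obstacle here: all the substantive work has been carried out in Proposition~\ref{quot equiv} (the groupoid-level descent) and in Lemma~\ref{stackification equiv} (the passage to the stackification). The only points requiring routine care are (i) checking that $[U/_pR]$ genuinely falls within the scope of Lemma~\ref{stackification equiv}, i.e.\ that it is a category fibered in groupoids over $(\Sch/S)_{\fppf}$, which holds by its very construction; and (ii) checking that the $\widetilde{\cX}$-algebra structure on $\widetilde{[\cA/_pR]}$ produced before Lemma~\ref{stackification equiv}, namely the morphism $\widetilde{\uprho}\colon\cO_{[U/R]}\to\widetilde{[\cA/_pR]}$, agrees with the algebra structure $[\uprho/R]$ defining $[\cA/R]$, which is immediate from the functoriality of \eqref{equiv topoi}. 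With these observations the proof is essentially the one line "compose Proposition~\ref{quot equiv} with Lemma~\ref{stackification equiv}".
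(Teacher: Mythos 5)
Your proof is correct and is exactly the paper's argument: the sentence immediately preceding Proposition~\ref{main app} states that it follows by combining Lemma~\ref{stackification equiv} (passage from $[U/_pR]$ to its stackification $[U/R]$) with Proposition~\ref{quot equiv} (the groupoid-level equivalence), which is precisely the composition you describe.
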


\subsection*{Acknowledgements}

The author is supported by JSPS KAKENHI 19K14502. Part of this work was done during his stay at the Max Planck Institute for Mathematics in Bonn, in the period from April to~Decem\-ber 2019. The author gratefully acknowledge MPIM Bonn for their support and hospitality. Furthermore, the author would like to thank the referees for their careful reading and valuable suggestions.


\pdfbookmark[1]{References}{ref}
\LastPageEnding

\end{document}